\numberwithin{equation}{section}
\newcommand{\Z}{\mathbb{Z}}
\newcommand{\N}{\mathbb{N}}
\newcommand{\R}{\mathbb{R}}
\newcommand{\F}{\mathcal{F}}
\newcommand{\Sh}{\mathcal{S}}
\newcommand{\Nu}{\boldsymbol{\nu}}
\newcommand{\K}{\boldsymbol{k}}
\newcommand{\A}{\mathcal{A}}
\newcommand{\supp}{\mathop{\mathrm{supp}}}
\newcommand{\pa}{\partial}
\newcommand{\vphi}{\varphi}
\newcommand{\Op}{\mathop{\mathrm{Op}}}
\newcommand{\la}{\langle}
\newcommand{\ra}{\rangle}
\theoremstyle{plain}
\newtheorem{thm}{Theorem}[section]
\newtheorem{prop}[thm]{Proposition}
\newtheorem{lem}[thm]{Lemma}
\newtheorem*{thmA}{Theorem A}
\newtheorem*{thmB}{Theorem B}
\theoremstyle{definition}
\newtheorem{dfn}[thm]{Definition}
\newtheorem{rem}[thm]{Remark}
\begin{document}

\title[Bilinear pseudo-differential operators]
{Boundedness of bilinear pseudo-differential operators with $BS^{m}_{0,0}$ symbols 
on Sobolev spaces}

\author[N. Shida]{Naoto Shida}

\date{\today}

\address[N. Shida]
{Graduate School of Mathematics, Nagoya University, 
Chikusa-ku, Nagoya 464-8602, Japan}

\email[N. Shida]{naoto.shida.c3@math.nagoya-u.ac.jp}

\keywords{Bilinear pseudo-differential operators, bilinear H\"ormander symbol classes, Sobolev spaces, symbolic calculus}

\subjclass[2020]{35S05, 42B15, 42B35}

\begin{abstract}
In the present paper, 
bilinear pseudo-differential operators with symbols 
in the bilinear H\"ormander class $BS^m_{0,0}$ are considered.
In particular, the boundedness of these operators on Sobolev spaces is established. 
Our main result is proved by using symbolic calculus and 
the boundedness  of those operators 
with certain $S_{0,0}$-type symbols on Lebesgue spaces.
\end{abstract}
\maketitle
%
\section{Introduction}\label{Intro}
For a  bounded function $\sigma = \sigma(x, \xi_1, \xi_2)$ on $(\R^n)^3$, 
the bilinear pseudo-differential operator $T_{\sigma}$ is defined by
\[
T_{\sigma}(f_1, f_2)(x)
=
\frac{1}{(2\pi)^{2n}}
\int_{(\R^n)^2}
e^{ix \cdot (\xi_1 + \xi_2)}
\sigma(x, \xi_1, \xi_2)
\widehat{f}_1(\xi_1)
\widehat{f}_2(\xi_2)
\,
d\xi_1
d\xi_2,
\quad
x \in \R^n,
\]
where $f_1, f_2 \in \Sh(\R^n)$.
If $\sigma$ is independent of the variable $x$, that is, $\sigma = \sigma(\xi_1, \xi_2)$, then we call $T_{\sigma}$ a bilinear Fourier multiplier operator.

Let $X_1$, $X_2$ and $Y$ be function spaces on $\R^n$ equipped with quasi-norms $\|\cdot\|_{X_1}$, $\|\cdot\|_{X_2}$ and $\|\cdot\|_{Y}$, respectively.
If there exists a positive constant $C$ such that
\begin{equation}\label{bdd-dfn}
\|T_{\sigma}(f_1, f_2)\|_{Y} \le C \|f_1\|_{X_1} \|f_2\|_{X_2}
\end{equation}
for all $f_j \in \Sh \cap X_j$, $j=1, 2$, then we say that $T_{\sigma}$ is bounded from $X_1 \times X_2$ to $Y$.
We write the smallest constant $C$ of \eqref{bdd-dfn} as $\|T_{\sigma}\|_{X_1 \times X_2 \to Y}$.
For a  symbol class $\A$, $\Op(\A)$ denotes the class of all operators $T_{\sigma}$ with $\sigma \in \A$.  
If $T_{\sigma}$ is bounded from $X_1 \times X_2$ to $Y$ for all $\sigma \in \A$, then
we write $\Op(\A) \subset B(X_1 \times X_2 \to Y)$.

In the present paper, we consider the following symbol class.
\begin{dfn}
For $m \in \R$, the class $BS^{m}_{0,0}$ denotes the set of all 
$\sigma = \sigma(x, \xi_1, \xi_2) \in C^{\infty}((\R^n)^3)$ such that
\[
|
\pa_{x}^{\alpha}
\pa_{\xi_1}^{\beta_1}
\pa_{\xi_2}^{\beta_2}
\sigma(x, \xi_1, \xi_2)
|
\le
C_{\alpha, \beta_1, \beta_2}
(1+|\xi_1|+|\xi_2|)^{m}
\] 
for all multi-indices $\alpha, \beta_1, \beta_2 \in \N_0^n = \{0, 1, 2, \dots\}^n$.
\end{dfn}
It was first proved by B\'enyi-Torres \cite{BT-2} that 
for $1 \le p_1, p_2, p < \infty$, $1/p=1/p_1+1/p_2$, 
there exists a symbol in $BS^0_{0,0}$ such that the corresponding bilinear Fourier multiplier operator is not bounded from $L^{p_1} \times L^{p_2}$ to $L^p$.
In particular, the boundedness 
\[
\Op(BS^{0}_{0,0}) \subset B(L^2 \times L^2 \to L^1)
\]
does not hold in contrast to the celebrated Calder\'on-Vaillancourt theorem \cite{CV} for linear pseudo-differential operators.
After the works of B\'enyi-Bernicot-Maldonado-Naibo-Torres \cite{BBMNT}, Michalowski-Rule-Staubach \cite{MRS} and Miyachi-Tomita \cite{MT-IUMJ},  the following result was given by Kato-Miyachi-Tomita \cite{KMT-multi-2022} very recently.

\begin{thmA}[{\cite[Theorem 1.2]{KMT-multi-2022}}]
Let  $0< p_1, p_2, p \le \infty$, $1/p \le 1/p_1+1/p_2$, and $m \in \R$.
Then the boundedness
\begin{equation}\label{boundedness-hp}
\Op(BS^{m}_{0,0})
\subset
B(h^{p_1} \times h^{p_2} \to h^p)
\end{equation}
holds if and only if 
\begin{equation}\label{criticalorder}
m
\le
\min
\left\{
\frac{n}{p},
\frac{n}{2}
\right\}
-
\max
\left\{
\frac{n}{p_1},
\frac{n}{2}
\right\}
-
\max
\left\{
\frac{n}{p_2},
\frac{n}{2}
\right\}.
\end{equation}
If \eqref{criticalorder} is satisfied and if $p_1 = \infty$ (resp. $p_2=\infty$), then \eqref{boundedness-hp} holds with $h^{p_1}$ (resp. $h^{p_2}$)  
replaced by $bmo$. 
\end{thmA}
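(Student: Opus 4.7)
The proof has two parts: sufficiency of condition \eqref{criticalorder} for the boundedness \eqref{boundedness-hp}, and its necessity via counterexamples. For sufficiency, my plan is to decompose the symbol dyadically in frequency, $\sigma = \sigma_0 + \sum_{k\ge 1}\sigma_k$ with $\supp\sigma_k \subset \{|\xi_1|+|\xi_2|\sim 2^k\}$, and then split each $\sigma_k$ further into a diagonal piece ($|\xi_1|\sim|\xi_2|$) and two off-diagonal pieces (one frequency variable dominates). On each off-diagonal piece a Fourier expansion on a dyadic box makes the symbol factorize, so that $T_{\sigma_k}$ becomes a bilinear pairing of two \emph{linear} pseudo-differential operators of classes $S^{m_1}_{0,0}$, $S^{m_2}_{0,0}$ with $m_1+m_2 = m$. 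The linear $S^{m}_{0,0}$ theory on $h^p$/$bmo$ then gives a sharp factor-by-factor bound; the number $\max\{n/p_j,n/2\}$ in \eqref{criticalorder} is precisely the loss of regularity needed per factor to absorb the lack of kernel decay of linear $S^0_{0,0}$ operators in the relevant space, and the dyadic sum converges exactly at the critical order.

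The diagonal piece $|\xi_1|\sim|\xi_2|\sim 2^k$ is genuinely bilinear. In the model case $p_1=p_2=2$, $p=1$ it is covered by the Miyachi--Tomita $L^2\times L^2 \to L^1$ estimate on $\Op(BS^{-n/2}_{0,0})$. For other $p_j,p$ I would expand the diagonal $\sigma_k$ in a rapidly convergent tensor series via Fourier expansion on a dyadic cube, and when $p$ or some $p_j$ falls outside $[1,\infty]$ pass to an atomic decomposition of the relevant $h^p$: apply $T_\sigma$ to each atom, check that its image is a suitable molecule on the output side, and sum. The endpoints $p_j=\infty$ are handled by replacing $h^\infty$ with $bmo$, using $h^1$--$bmo$ duality together with the symmetry of $T_\sigma$ under interchange of its arguments with the formal adjoint (whose symbol is again of class $BS^{m}_{0,0}$).

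For necessity, one constructs, in each of the (at most) eight subregimes determined by whether $p$ and the $p_j$ lie above or below $2$, a symbol of order $m = (\text{critical order})+\varepsilon$ and test inputs violating \eqref{bdd-dfn}. Typical building blocks are lacunary sums
\[
\sigma(x,\xi_1,\xi_2) = \sum_j 2^{m k_j}\, e^{i x\cdot c_j}\, \vphi(2^{-k_j}\xi_1 - a_j)\, \vphi(2^{-k_j}\xi_2 - b_j),
\]
tested against Gaussians or lacunary rescalings of a fixed Hardy-space atom; the lower bound is read off from frequency orthogonality. I expect the main obstacle to be the sufficiency argument for the diagonal piece in the mixed regimes (for instance $p_1>2$, $p_2<2$, $p<2$), where neither the Miyachi--Tomita endpoint nor the off-diagonal factorization applies directly, and one must combine the atomic bilinear analysis with a careful bookkeeping of the symbol's regularity in its modulation variables to get the sharp exponent.
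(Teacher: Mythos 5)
Your plan has a decisive technical flaw at its very first step, and it is the same flaw in both the off-diagonal and the diagonal parts: for an $S_{0,0}$-type symbol you cannot expand in a ``rapidly convergent'' Fourier series on a \emph{dyadic} box. Since symbols in $BS^m_{0,0}$ gain nothing under $\xi$-differentiation, the Fourier coefficients on a box of side $L\sim 2^k$ only decay like $(1+|j|)^{-N}L^{N}$, so the series is useless at large scales; the expansion must be carried out on \emph{unit} cubes (this is the Coifman--Meyer device used in \cite{KMT-multi-2022} and in the proof of Theorem \ref{est-sep} of this paper). After that, $T_{\sigma}$ does not reduce to a pairing of two linear $S^{m_1}_{0,0}$ and $S^{m_2}_{0,0}$ operators: one is left with lattice sums of $\sim 2^{kn}$ frequency-localized pieces in each variable, and estimating those sums sharply is precisely where the $n/2$-terms in \eqref{criticalorder} come from (in \cite{KMT-multi-2022}, and in Section \ref{secS00type} here, this is done via the Wiener amalgam embeddings of Proposition \ref{prop-inclusion-Wieneramalgam} and the $\ell^2$-convolution estimate of Lemma \ref{lem-Bclass} with $a_1+a_2=-n/2$). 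Indeed, if your factorization-plus-linear-theory scheme worked, then on the diagonal piece it would give $L^2\times L^2\to L^1$ boundedness for every $m<0$ (each dyadic block with constant $O(1)$ at $m=0$, then summing), contradicting the B\'enyi--Torres counterexample showing the threshold is $-n/2$; so the approach cannot be repaired by bookkeeping alone. The fallback you propose for $p\le 1$ --- atomic decomposition with ``image of an atom is a molecule'' --- also fails for $S_{0,0}$-type symbols, whose kernels have no decay: this is exactly why the known proofs (Miyachi--Tomita, Kato--Miyachi--Tomita) work with amalgam/modulation-type norms rather than atoms and molecules. The duality reduction for $p_1=\infty$ or $p_2=\infty$ is legitimate in spirit (the transposed symbol is again in $BS^m_{0,0}$), but it must be run against cases already established, which your sketch has not secured.

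The necessity half is also short of a proof. Lacunary tensor symbols tested against atoms or Gaussians, with a lower bound ``read off from frequency orthogonality,'' only produce the scaling conditions involving $n/p_1$, $n/p_2$, $n/p$; they do not reach the $n/2$ parts of $\max\{n/p_j,n/2\}$ and $\min\{n/p,n/2\}$. For those one needs Wainger's special trigonometric series (functions of the form $\sum |\ell|^{-b}e^{i|\ell|^{a}}e^{i\ell\cdot x}$, whose $L^p$ norms beat the trivial bound) together with Rademacher randomization of the symbol coefficients and Khintchine's inequality, exactly as in Section \ref{sec-necessity} of this paper and in \cite{KMT-multi-2022}. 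Note finally that the statement you are proving is quoted in the paper from \cite[Theorem 1.2]{KMT-multi-2022}; the in-paper model for a correct argument is the proof of Theorem \ref{est-sep}, and your proposal diverges from it at the points above in ways that are not gaps of exposition but steps that genuinely fail.
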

\noindent
For the definition of the spaces $h^p$ and $bmo$, see Section \ref{prelim}.

Next, we recall the boundedness results of linear pseudo-differential operators. 
For a bounded function $\sigma = \sigma(x, \xi)$ on $(\R^n)^2$, 
we define the linear pseudo-differential operator
by
\[
\sigma(X, D)f(x)
=
\frac{1}{(2\pi)^n}
\int_{\R^n}
e^{ix \cdot \xi}
\sigma(x, \xi)
\widehat{f}(\xi)
\,
d\xi,
\quad
x \in \R^n.
\]
The linear H\"ormander symbol class $S^{m}_{0, 0}$ is defined by the set of all 
$\sigma = \sigma(x, \xi) \in C^{\infty}((\R^n)^2)$ such that
\[
|\pa_{x}^{\alpha} \pa_{\xi}^{\beta} \sigma(x, \xi)|
\le
C_{\alpha, \beta}
(1+|\xi|)^{m}
\]
for all $\alpha, \beta \in \N^n_0$.

The boundedness  of  linear pseudo-differential operators with symbols in $S^m_{0,0}$
are well studied by Calder\'on-Vaillancourt \cite{CV}, 
Coifman-Meyer \cite{CM},  
Miyachi \cite{Miyachi}
and
P\"aiv\"arinta-Somersalo \cite{PS}, 
for instance.
More precisely, it is known that the following theorem holds.

\begin{thmA}[\cite{CV, CM, Miyachi, PS}]
Let $0< p \le \widetilde{p} \le \infty$ and $m \in \R$. Then the boundedness
\[
\Op(S^{m}_{0,0})
\subset
B(h^{p} \to h^{\widetilde{p}})
\]
holds if and only if
\[
m 
\le 
\min
\left\{
\frac{n}{\widetilde{p}},
\frac{n}{2}
\right\}
-
\max
\left\{
\frac{n}{p},
\frac{n}{2}
\right\},
\]
where $h^{p}$ (resp. $h^{\widetilde{p}}$) should be replaced by $bmo$ 
if $p = \infty$ (resp. $\widetilde{p} = \infty$).
\end{thmA}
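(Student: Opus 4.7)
The proof has two directions, and I would handle them separately. For necessity, I would construct two families of bad symbols. Chirp-type symbols $\sigma_t(x,\xi) = \chi(\xi) e^{it\phi(x,\xi)}$ with $\phi$ a non-degenerate quadratic form (the Calder\'on--Vaillancourt sharpness construction) applied to Gaussian wave packets produce the $n/2$ contributions in the critical order. Symbols of the form $e^{-ix\cdot\eta_{0}}\psi(\xi-\eta_{0})$, tested against atoms localised on small cubes, account for the $n/p$ and $n/\widetilde{p}$ contributions. Tracking the scaling of both sides of the boundedness inequality as the relevant parameter varies forces inequality \eqref{criticalorder}.

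For sufficiency, the plan is to assemble the claim from three endpoint estimates combined by complex interpolation. The three endpoints are the Calder\'on--Vaillancourt theorem $\Op(S^{0}_{0,0})\subset B(L^{2}\to L^{2})$; the Hardy-space estimate $\Op(S^{-n/p+n/2}_{0,0}) \subset B(h^{p}\to L^{p})$ for $0<p\le 1$, due to Miyachi and P\"aiv\"arinta--Somersalo; and their $bmo$-counterparts obtained by duality (using that $S^{m}_{0,0}$ is essentially closed under the adjoint operation). The general claim is then obtained by complex interpolation along the analytic family of operators with symbols $(1+|\xi|^{2})^{-z/2}\sigma(x,\xi)$: multiplication by $(1+|\xi|^{2})^{-z/2}$ preserves the class $S^{m}_{0,0}$ and shifts the order by $-\mathrm{Re}\,z$, so the family has exactly the $z$-dependence required to interpolate between the three endpoints and cover every pair $(p,\widetilde{p})$ with $p\le\widetilde{p}$.

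The principal obstacle is the Hardy-space endpoint, say $\Op(S^{-n/2}_{0,0})\subset B(h^{1}\to L^{1})$. The argument I have in mind is atomic: decompose $\sigma = \sum_{j\ge 0}\sigma_{j}$ in dyadic frequency shells $|\xi|\sim 2^{j}$, so that each $\sigma_{j}$ lies in $S^{0}_{0,0}$ with symbol seminorms of size $2^{-jn/2}$. For an $h^{1}$-atom $a$ with support in $B(x_{0},r)$, split $\R^{n}$ into a neighbourhood of $B$ and its complement. On the neighbourhood, Cauchy--Schwarz combined with Calder\'on--Vaillancourt applied to $\sigma_{j}$ and the $L^{2}$-normalisation of $a$ gives a usable estimate. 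On the complement, integration by parts in $\xi$ yields pointwise kernel estimates of the form $|K_{j}(x,y)|\lesssim 2^{jn}(1+2^{j}|x-y|)^{-N}$, after which the atom's cancellation is exploited via a first-order Taylor expansion of the kernel. The two regimes balance at the scale $2^{j}r\sim 1$, and the resulting $j$-sum converges precisely at the critical order $m=-n/2$. The $h^{p}$ variants for $p<1$ are handled analogously, but require higher-order moments on the atoms and a correspondingly higher Taylor expansion.
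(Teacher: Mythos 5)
First, a caveat on the comparison: the paper does not prove this Theorem A at all --- it is quoted from Calder\'on--Vaillancourt, Coifman--Meyer, Miyachi and P\"aiv\"arinta--Somersalo, with the necessity part referred to \cite[Theorem 1.5]{KMT-multi-2022}. So your proposal can only be judged on its own merits, and it has two genuine gaps on the sufficiency side.

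The first gap is the kernel estimate at the heart of your atomic argument for the endpoint $\Op(S^{-n/2}_{0,0})\subset B(h^1\to L^1)$. For $\sigma_j$ supported in $|\xi|\sim 2^j$ with $S_{0,0}$-seminorms of size $2^{-jn/2}$, integration by parts in $\xi$ gains a factor $|x-y|^{-1}$ per step but \emph{no} gain from differentiating the symbol (that is the defining feature of the class $S_{0,0}$), so the correct bound is $|K_j(x,y)|\lesssim 2^{jn/2}(1+|x-y|)^{-N}$: the decay sets in only at unit scale, uniformly in $j$, not at scale $2^{-j}$. Your claimed bound $2^{jn}(1+2^j|x-y|)^{-N}$ is an $S^0_{1,0}$-type estimate and is false here. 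With the true bound there is no Calder\'on--Zygmund structure adapted to the scale $2^{-j}$, the near/far splitting balanced at $2^jr\sim 1$ is unavailable, and if you redo the computation (near part by Cauchy--Schwarz: $\lesssim 2^{-jn/2}r^{-n/2}$; far part with one moment: $\lesssim r\,2^{j(1+n/2)}$) the sum over $j$ of the minimum of the two bounds blows up as $r\to 0$. This failure is precisely why the critical order $-n/2$ is delicate: the actual proofs (Miyachi, P\"aiv\"arinta--Somersalo, and the method this paper uses for the bilinear case in Theorem \ref{est-sep}) abandon the dyadic kernel argument in favour of a \emph{uniform} decomposition into unit frequency cubes with $\ell^2$ almost-orthogonality, where the factor $2^{-jn/2}$ comes from Cauchy--Schwarz over the $\approx 2^{jn}$ unit cubes in a shell, not from kernel decay against atom cancellation.

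The second gap is coverage. All three of your endpoints --- $L^2\to L^2$, $h^p\to L^p$ ($p\le 1$), and its dual $L^\infty\to bmo$ --- have equal source and target exponents. Complex (Stein) interpolation with the family $(1+|\xi|^2)^{-z/2}\sigma$ only shifts the order $m$; the Lebesgue exponents of the interpolated estimate still lie on the segment joining the endpoint exponent pairs, so you never leave the diagonal $p=\widetilde p$. The off-diagonal cases $p<\widetilde p$ (for example $h^1\to L^2$ at order $-n/2$, or $h^p\to h^{\widetilde p}$ with $p<2<\widetilde p$ at order $n/\widetilde p-n/p$) are exactly the content of the two-exponent statement and are not reachable this way; moreover, since the critical order $\min\{n/\widetilde p,n/2\}-\max\{n/p,n/2\}$ is concave in $(1/p,1/\widetilde p)$, interpolation attains it only inside faces where it is affine. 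To get the off-diagonal range one needs genuinely off-diagonal ingredients, e.g.\ Miyachi's $h^p\to L^2$ estimate at order $n/2-\max\{n/p,n/2\}$ together with a factorization $T_\sigma=(I-\Delta)^{-b/2}\circ\bigl[(I-\Delta)^{b/2}T_\sigma\bigr]$ (symbolic calculus plus the Sobolev embedding $L^2_b\subset L^{\widetilde p}$) and a duality step for targets $bmo$. Your necessity sketch is plausible in outline (the standard sharp examples are Wainger-type series or chirps, as in \cite{KMT-multi-2022}), but as written the sufficiency argument does not close.
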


\noindent
We remark that the proof of  the `only if' part can be found in \cite[Theorem 1.5]{KMT-multi-2022}.

For the linear case, the boundedness of $\Op(S^m_{0,0})$ on  Sobolev  spaces
can be derived from Theorem A and symbolic calculus in $S^m_{0, 0}$.
It is known that 
if $\sigma_j \in S^{m_j}_{0,0}$, $j=1,2$, then there exist $\sigma_0 \in S^{m_1+m_2}_{0,0}$ such that 
\[
\sigma_1(X, D)\sigma_2(X, D) = \sigma_0(X, D)
\]
(see, e.g., \cite[Chapter VI\hspace{-1pt}I \S 5]{Stein-Harmonic}).
Thus, we see that 
\[
(I-\Delta)^{\widetilde{s}/2} \sigma(X, D) (I-\Delta)^{-s/2}
\in
\Op(S^{m-s+\widetilde{s}}_{0, 0})
\]
since $(I-\Delta)^{a/2} \in \Op(S^{a}_{0, 0})$ for $a \in \R$. 
Combining this argument with Theorem A, 
we obtain the following.
\begin{thmB}\label{linear-Sobolev}
Let $0< p \le \widetilde{p} \le \infty$, $s, \widetilde{s} \in \R$ and $m \in \R$.
Then the boundedness
\begin{equation}\label{bdd-linear-Sobolev}
\Op(S^m_{0,0})
\subset
B(h^{p}_{s} \to h^{\widetilde{p}}_{\widetilde{s}})
\end{equation}
holds if and only if
\begin{equation}\label{m-linear}
m 
\le 
\min
\left\{
\frac{n}{\widetilde{p}},
\frac{n}{2}
\right\}
-
\max
\left\{
\frac{n}{p},
\frac{n}{2}
\right\}
+s-\widetilde{s}.
\end{equation}
Here $h^{p}_{s}$ (resp. $h^{\widetilde{p}}_{\widetilde{s}}$) should be replaced by $bmo_s$ (resp. $bmo_{\widetilde{s}}$) if $p = \infty$ (resp. $\widetilde{p} = \infty$).
\end{thmB}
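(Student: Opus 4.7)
The plan is to transport Theorem A through Bessel-potential conjugation, exactly as sketched in the paragraph preceding the statement. For the sufficiency, given $\sigma \in S^m_{0,0}$ with $m$ satisfying \eqref{m-linear} and $f \in \Sh(\R^n)$, I unwind the Sobolev norm by writing
\[
\|\sigma(X,D) f\|_{h^{\widetilde{p}}_{\widetilde{s}}}
= \bigl\| \bigl[ (I-\Delta)^{\widetilde{s}/2} \sigma(X,D) (I-\Delta)^{-s/2} \bigr] g \bigr\|_{h^{\widetilde{p}}},
\qquad g = (I-\Delta)^{s/2} f,
\]
and noting that $\|g\|_{h^p} = \|f\|_{h^p_s}$ by the definition of the Sobolev space. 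The operator in brackets belongs to $\Op(S^{m-s+\widetilde{s}}_{0,0})$ by the symbolic calculus for $S^{\,\cdot}_{0,0}$ combined with $(I-\Delta)^{a/2} \in \Op(S^{a}_{0,0})$, and the hypothesis \eqref{m-linear} states precisely that $m-s+\widetilde{s}$ lies below the critical order of Theorem A for the pair $(p,\widetilde{p})$. Theorem A then delivers the required $h^p \to h^{\widetilde{p}}$ bound.

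For the necessity, I run the same calculation in reverse. Given an arbitrary $\tau \in S^{m-s+\widetilde{s}}_{0,0}$, the symbolic calculus produces a $\sigma \in S^m_{0,0}$ with
\[
\sigma(X,D) = (I-\Delta)^{-\widetilde{s}/2}\,\tau(X,D)\,(I-\Delta)^{s/2}.
\]
Under the assumed boundedness \eqref{bdd-linear-Sobolev} and the two norm identities above (read from the other side), $\tau(X,D)$ maps $h^p$ boundedly to $h^{\widetilde{p}}$. Since $\tau$ was arbitrary in $S^{m-s+\widetilde{s}}_{0,0}$, the only-if half of Theorem A (established in \cite{KMT-multi-2022}, as the text notes) forces $m-s+\widetilde{s}$ to meet the critical bound, and rearranging gives \eqref{m-linear}.

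All of the substantive work is absorbed into Theorem A and the two symbolic-calculus inputs quoted above; the argument itself is pure bookkeeping. The only point deserving care is the endpoint interpretation: when $p$ or $\widetilde{p}$ equals $\infty$, one must read $h^{p}_{s}$ as $bmo_s$ (and likewise on the target side) and use that $(I-\Delta)^{s/2}$ provides an isomorphism of $bmo_s$ onto $bmo$. This is standard, so I do not anticipate any genuine obstacle beyond keeping the endpoint notation straight while invoking Theorem A after the conjugation has been performed.
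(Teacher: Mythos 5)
Your proposal is correct and follows essentially the same route the paper indicates: conjugating by Bessel potentials, using $(I-\Delta)^{a/2}\in\Op(S^a_{0,0})$ and the $S_{0,0}$ composition calculus to place the conjugated operator in $\Op(S^{m-s+\widetilde{s}}_{0,0})$, and then invoking Theorem A (with its `only if' half, via the reverse conjugation, for the necessity). The endpoint bookkeeping with $bmo_s$ is handled exactly as the paper's definitions require, so nothing further is needed.
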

The purpose of this paper is to generalize Theorem B to the bilinear case.
Our main result reads as follows.

\begin{thm}\label{main-thm}
Let $0< p_1, p_2, p \le \infty$, $1/p \le 1/p_1+1/p_2$, $s_1, s_2, s \in \R$,
and let
\begin{equation}\label{m-critical}
m 
= 
\min
\left\{
\frac{n}{p},
\frac{n}{2}
\right\}
-
\max
\left\{
\frac{n}{p_1},
\frac{n}{2}
\right\}
-
\max
\left\{
\frac{n}{p_2},
\frac{n}{2}
\right\}
+s_1
+s_2
-s.
\end{equation}
If
\begin{align}
&
s_1
<
\max
\left\{
\frac{n}{p_1},
\frac{n}{2}
\right\},
\quad
s_2
<
\max
\left\{
\frac{n}{p_2},
\frac{n}{2}
\right\},
\quad
s
>
-
\max
\left\{
\frac{n}{p^{\prime}},
\frac{n}{2}
\right\},
\label{m-critical?}
\end{align}
then the boundedness
\begin{equation}\label{main-est}
\Op(BS^{m}_{0,0})
\subset
B(h^{p_1}_{s_1} \times h^{p_2}_{s_2} \to h^p_s)
\end{equation}
holds, where $h^{p_1}_{s_1}$ (resp. $h^{p_2}_{s_2}$, $h^p_s$) should be replaced by 
$bmo_{s_1}$ (resp. $bmo_{s_2}$, $bmo_s$)
if $p_1=\infty$ (resp. $p_2 = \infty$, $p= \infty$).
\end{thm}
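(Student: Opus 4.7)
My plan is to reduce Theorem~\ref{main-thm} to Theorem~A (the case $s_1=s_2=s=0$) by means of a bilinear symbolic calculus combined with a Littlewood-Paley decomposition of the symbol. Writing $g_j=(I-\Delta)^{s_j/2}f_j$ so that $\|f_j\|_{h^{p_j}_{s_j}}=\|g_j\|_{h^{p_j}}$, the target bound becomes
\[
\bigl\|(I-\Delta)^{s/2}T_\sigma\bigl((I-\Delta)^{-s_1/2}g_1,(I-\Delta)^{-s_2/2}g_2\bigr)\bigr\|_{h^p}\lesssim\|g_1\|_{h^{p_1}}\|g_2\|_{h^{p_2}},
\]
whose left-hand side is formally $T_{\tilde\sigma}(g_1,g_2)$ with $\tilde\sigma(x,\xi_1,\xi_2)=\langle\xi_1+\xi_2\rangle^s\sigma(x,\xi_1,\xi_2)\langle\xi_1\rangle^{-s_1}\langle\xi_2\rangle^{-s_2}$. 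One would like $\tilde\sigma\in BS^{m^*}_{0,0}$ with $m^*=\min\{n/p,n/2\}-\max\{n/p_1,n/2\}-\max\{n/p_2,n/2\}$ (noting that $m-s_1-s_2+s=m^*$ by \eqref{m-critical}), so that Theorem~A would close the estimate. The obstruction is that the three weight factors do \emph{not} satisfy the isotropic bounds required by $BS^{\bullet}_{0,0}$ for the ranges of $s_1,s_2,s$ allowed in \eqref{m-critical?}: $\langle\xi_1\rangle^{-s_1}$ offers no uniform decay in $|\xi_1|+|\xi_2|$ when $s_1>0$, and $\langle\xi_1+\xi_2\rangle^s$ no uniform bound in terms of $1+|\xi_1|+|\xi_2|$ when $s<0$.

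To circumvent this I would perform a Littlewood-Paley decomposition $\sigma=\sum_{j,k\ge 0}\sigma^{j,k}$ with $\sigma^{j,k}$ frequency-localized to $|\xi_1|\sim 2^j$ and $|\xi_2|\sim 2^k$. On each support, $\langle\xi_i\rangle^{-s_i}$ is comparable to the scalar $2^{-js_1}$ or $2^{-ks_2}$ and can be extracted. In the \emph{non-resonant} regime $|j-k|\ge C$ one additionally has $|\xi_1+\xi_2|\sim 2^{\max(j,k)}$ on the support, so $\langle\xi_1+\xi_2\rangle^s$ likewise becomes the scalar $2^{s\max(j,k)}$. After extracting all three scalars, each remaining symbol lies uniformly in $BS^{m^*}_{0,0}$, Theorem~A furnishes a uniform $h^{p_1}\times h^{p_2}\to h^p$ bound, and the double sum of the extracted powers of $2$ assembles into a geometric series whose convergence is governed by the strict inequalities in \eqref{m-critical?}.

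The crux is the \emph{resonant} regime $j\sim k$, where $|\xi_1+\xi_2|$ may be anywhere in $[0,2^{j+1}]$ and the scalar trick fails for $\langle\xi_1+\xi_2\rangle^s$. I would further decompose $\sigma^{j,j}=\sum_{\ell\le j}\sigma^{j,j,\ell}$ with $\sigma^{j,j,\ell}$ localized to $|\xi_1+\xi_2|\sim 2^\ell$; then $\langle\xi_1+\xi_2\rangle^s$ becomes $2^{\ell s}$ and the output $T_{\sigma^{j,j,\ell}}(g_1,g_2)$ is $x$-frequency-localized at scale $2^\ell$, so its $h^p$-norm can be assembled via a Littlewood-Paley square function. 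Summation in $\ell$ then relies precisely on the strict lower bound on $s$ in \eqref{m-critical?}, while summation in $j$ uses the strict upper bounds on $s_1$ and $s_2$. The main technical obstacle I anticipate is the bookkeeping of the remainder terms created when pushing the weight factors past $\sigma$ inside the class $BS^{m}_{0,0}$—the $(\rho,\delta)=(0,0)$ asymptotic calculus yields only finite-order gain, so each remainder has to be estimated directly using the uniform Hardy-space bounds of Theorem~A and its linear analogue. To treat the lower bound on $s$ symmetrically with the upper bounds on $s_1,s_2$, a duality step based on $(h^p_s)^*\simeq bmo_{-s}$ (or, equivalently, the identification of $T_\sigma$ with one of its transposes) will likely be used, so that the three constraints in \eqref{m-critical?} can be handled on an equal footing.
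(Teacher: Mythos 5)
Your reduction to the conjugated symbol $\tilde\sigma$, and your observation that it satisfies only the anisotropic bound $\langle\xi_1+\xi_2\rangle^{s}\langle\xi_1\rangle^{-s_1}\langle\xi_2\rangle^{-s_2}(1+|\xi_1|+|\xi_2|)^{m}$ rather than a $BS^{m^{*}}_{0,0}$ bound, matches the paper's starting point (Proposition \ref{tau-est}); note that no symbolic-calculus remainders arise there, since the conjugation is computed exactly. The gap is in your summation step. Because $m$ sits exactly at the critical value \eqref{m-critical}, the scalars you extract after the dyadic decomposition do not form a convergent geometric series: on a non-resonant block $|\xi_1|\sim 2^{j}\gg|\xi_2|\sim 2^{k}$ the normalized piece has $BS^{m^{*}}_{0,0}$-seminorm of size $2^{(j-k)s_2}$, which is constant when $s_2=0$ and grows when $0<s_2<\max\{n/p_2,n/2\}$ (both allowed by \eqref{m-critical?}); on a resonant block with output frequency $2^{\ell}\ll 2^{j}$ the seminorm is of size $2^{-(j-\ell)s}$, which grows when $-\max\{n/p^{\prime},n/2\}<s<0$. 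Hence the pieces are not uniformly in $BS^{m^{*}}_{0,0}$, and even in the borderline cases where the factors are merely bounded, applying Theorem A blockwise and summing by the (quasi-)triangle inequality cannot close the estimate: the strict inequalities \eqref{m-critical?} provide no room precisely because the total order is critical, so all the decay has to be produced by orthogonality between blocks, which your sketch invokes (``assembled via a Littlewood--Paley square function'') but does not actually establish, and which in $h^{p}$ with $p\le 1$ or at the $bmo$ endpoints is far from automatic.

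The missing mechanism is a quantitative gain coming from the frequency imbalance itself, i.e.\ boundedness at critical total order for the product-type classes $BS^{(m_1,m_2)}_{0,0}$ and $BS^{(m_1,m_2),*j}_{0,0}$, which is exactly the content of the paper's Theorem \ref{est-sep} (and Proposition \ref{sc-prop}); it is proved not by dyadic blocks and Theorem A, but by a unit-scale Fourier-series (Coifman--Meyer) expansion, Wiener amalgam embeddings of $h^{p}$ and $bmo$, and the endpoint $\ell^{2}$-convolution estimate of Lemma \ref{lem-Bclass} with $a_1+a_2=-n/2$. Your closing remark about duality is in the right spirit, but it conceals a further structural point: the resonant (low output frequency) region genuinely forces $p_1>1$ or $p_2>1$ together with $p<\infty$ in the direct argument (Proposition \ref{mainprop}(1)), so the full range of Theorem \ref{main-thm}, including $p_1,p_2\le 1$ and the $bmo$ cases, additionally requires the duality and complex-interpolation steps carried out in Section \ref{sec-mainthm} and the Appendix; none of this, nor a substitute for it, is supplied in your proposal.
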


Our assumptions \eqref{m-critical} and \eqref{m-critical?} 
are sharp in the following sense.

\begin{thm}\label{main-thm2}
Let $m \in \R$, $0< p_1, p_2, p \le \infty$, $1/p \le 1/p_1+1/p_2$ 
and $s_1, s_2, s \in \R$.
If the boundedness \eqref{main-est} holds, then
\begin{equation}\label{necessity-1}
m
\le
\min
\left\{
\frac{n}{p},
\frac{n}{2}
\right\}
-
\max
\left\{
\frac{n}{p_1},
\frac{n}{2}
\right\}
-
\max
\left\{
\frac{n}{p_2},
\frac{n}{2}
\right\}
+s_1
+s_2
-s
\end{equation}
and
\begin{align}
&s_1 
\le 
\max
\left\{
\frac{n}{p_1},
\frac{n}{2}
\right\}
+\kappa,
\quad
s_2 
\le 
\max
\left\{
\frac{n}{p_2},
\frac{n}{2}
\right\}
+\kappa,
\quad
s
\ge
-\max
\left\{
\frac{n}{p^\prime},
\frac{n}{2}
\right\}
-\kappa
\label{necessity-3},
\end{align}
where $\kappa = \min\{n/p, n/2\}-\max\{n/p_1, n/2\}-\max\{n/p_2, n/2\}+s_1+s_2-s-m$.
\end{thm}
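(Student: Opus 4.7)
My plan is to reduce the four necessary inequalities implicit in (\ref{necessity-1}) and (\ref{necessity-3}) to the already-established necessary conditions for the bilinear Theorem A (\cite{KMT-multi-2022}) and the linear Theorem B, using the bilinear symbolic calculus in $BS^m_{0,0}$. Since $\min\{n/p,n/2\} + \max\{n/p',n/2\} = n$, condition (\ref{necessity-3}) is equivalent to the three upper bounds
\[
m \le \min\{n/p, n/2\} - \max\{n/p_j, n/2\} + s_j - s \quad (j=1,2),
\]
\[
m \le n - \max\{n/p_1, n/2\} - \max\{n/p_2, n/2\} + s_1 + s_2,
\]
so together with (\ref{necessity-1}) there are four upper bounds on $m$ to establish.

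For (\ref{necessity-1}), I mimic the deduction of Theorem B from its linear counterpart indicated in the introduction. The crucial input is the bilinear symbolic calculus: for each $\sigma \in BS^m_{0,0}$ there is a symbol $\tilde\sigma \in BS^{m+s-s_1-s_2}_{0,0}$ with
\[
T_{\tilde\sigma}(g_1, g_2) = (I-\Delta)^{s/2}\,T_\sigma\bigl((I-\Delta)^{-s_1/2} g_1, (I-\Delta)^{-s_2/2} g_2\bigr)
\]
and controlled semi-norms, and the correspondence $\sigma \leftrightarrow \tilde\sigma$ is a bijection between $BS^m_{0,0}$ and $BS^{m+s-s_1-s_2}_{0,0}$. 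Hence the hypothesised bound on $\Op(BS^m_{0,0})$ from $h^{p_1}_{s_1}\times h^{p_2}_{s_2}$ into $h^p_s$ is equivalent to $\Op(BS^{m+s-s_1-s_2}_{0,0}) \subset B(h^{p_1}\times h^{p_2} \to h^p)$, and the necessity direction of the bilinear Theorem A immediately yields (\ref{necessity-1}).

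For the two bounds $m \le \min\{n/p,n/2\} - \max\{n/p_j,n/2\} + s_j - s$, I restrict to tensor-product symbols $\sigma(x,\xi_1,\xi_2) = \psi(\xi_{j'})\,\rho(x,\xi_j) \in BS^m_{0,0}$ with $\psi \in C_c^\infty(\R^n)$, $\psi(0) \ne 0$, and $\rho \in S^m_{0,0}$; these satisfy $T_\sigma(f_1, f_2) = [\psi(D) f_{j'}]\cdot \rho(X,D) f_j$. A freezing argument (choosing $f_{j'}$ to be a fixed Schwartz function and using translation invariance to relocate the nonzero region of $\psi(D) f_{j'}$) transfers the bilinear hypothesis into the linear bound $\Op(S^m_{0,0}) \subset B(h^{p_j}_{s_j} \to h^p_s)$, to which Theorem B applies. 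For the remaining inequality $m \le n - \max\{n/p_1,n/2\}-\max\{n/p_2,n/2\}+s_1+s_2$ (independent of $s$), I plan to pass to a bilinear adjoint $T_\sigma^{*1}$ (or $T_\sigma^{*2}$): the bilinear symbolic calculus again gives that the adjoint symbol lies in $BS^m_{0,0}$ with controlled semi-norms, and combining the hypothesis with the dual pairings (replaced by H\"older-type analogues in the Hardy range $p_j \le 1$) yields a new bilinear Sobolev bound to which (\ref{necessity-1}) can be applied; the identity $\min\{n/p,n/2\} + \max\{n/p',n/2\} = n$ simplifies the outcome to exactly the desired inequality.

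The main non-routine ingredient is the bilinear symbolic calculus in $BS^m_{0,0}$ underlying both the transfer argument for (\ref{necessity-1}) and the adjoint step for the third condition: I need the symbol of the composition $(I-\Delta)^{s/2}\circ T_\sigma\circ ((I-\Delta)^{-s_1/2},(I-\Delta)^{-s_2/2})$ and of the bilinear adjoints of $T_\sigma$ to lie in the expected bilinear H\"ormander classes with semi-norms controlled by finitely many semi-norms of $\sigma$. Once this calculus is in hand, the freezing step reduces directly to Theorem B, and the rest is routine bookkeeping.
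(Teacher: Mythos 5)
Your central reduction for \eqref{necessity-1} rests on a claim that is false, and the paper itself explains why. You assert a bilinear symbolic calculus producing, for each $\sigma \in BS^m_{0,0}$, a symbol $\tilde\sigma \in BS^{m+s-s_1-s_2}_{0,0}$ with $T_{\tilde\sigma}(g_1,g_2) = (I-\Delta)^{s/2}T_\sigma((I-\Delta)^{-s_1/2}g_1,(I-\Delta)^{-s_2/2}g_2)$, and hence an equivalence between the weighted boundedness \eqref{main-est} and the unweighted boundedness of $\Op(BS^{m+s-s_1-s_2}_{0,0})$. In contrast to the linear case, the conjugated symbol does \emph{not} stay in the bilinear H\"ormander class: for the Fourier multiplier example in the introduction, the symbol $\tau$ of \eqref{symbol***} equals $\la\xi_1+\xi_2\ra^{s}\la\xi_1\ra^{-s_1}\la\xi_2\ra^{-s_2}\sigma(\xi_1,\xi_2)$, its decay \eqref{decaynew*} cannot be improved, and (e.g.\ for $s<0$, $s_1=s_2=0$, on the antidiagonal $\xi_1=-\xi_2$) it is not bounded by $(1+|\xi_1|+|\xi_2|)^{m+s-s_1-s_2}$, so $\tau \notin BS^{m+s-s_1-s_2}_{0,0}$; the claimed bijection between the two classes also fails in the reverse direction. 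Moreover, if your equivalence were true it would contradict the very theorem you are proving: taking $m=m_c(p_1,p_2,p)+s_1+s_2-s$ (so $\kappa=0$) with $s_1$ larger than $\max\{n/p_1,n/2\}$, Theorem A would then give the boundedness \eqref{main-est}, violating \eqref{necessity-3}. So the transfer to Theorem A cannot prove \eqref{necessity-1}, and no symbolic-calculus shortcut can produce the three extra conditions in \eqref{necessity-3}, which are genuinely bilinear phenomena.

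The remaining steps are also not secure as stated: the freezing argument only gives you a product $[\psi(D)f_{j'}]\cdot\rho(X,D)f_j$ with a fixed, non-invertible factor, and Theorem B is stated for $p_j\le p$, which need not hold under $1/p\le 1/p_1+1/p_2$; and the duality/adjoint step is only straightforward when the relevant target space has an accessible dual, whereas the range $0<p\le 1$ needs a separate argument. The paper instead proves necessity by direct constructions: lattice sums of bump symbols with Rademacher coefficients, Wainger's functions $f_{a,b}$, the Littlewood--Paley characterization of $h^p_s$, and Khintchine's inequality yield Proposition \ref{key-prop*} for $1<p_1,p_2<\infty$, $0<p<\infty$ (with duality used only for Lebesgue exponents there), and the full range of exponents in Theorem \ref{main-thm2} is then reached by complex interpolation of the assumed boundedness with the known $L^2\times L^2\to L^2$ and $L^2_t\times L^2_t\to L^2_t$ bounds, arguing by contradiction. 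You would need to supply arguments of this concrete, counterexample-based type; the symbolic-calculus reduction is not available here.
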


We shall give some remarks on differences between the linear and bilinear cases. 

First, in the linear case, 
the boundedness \eqref{bdd-linear-Sobolev} holds for arbitrary $s$ and $\widetilde{s}$ if $m$ satisfies \eqref{m-linear}. 
However, in the bilinear case, 
we need certain restrictions as in \eqref{m-critical?}, and Theorem \ref{main-thm2} means that they are necessary.
If the order $m$ does not satisfy \eqref{m-critical}, that is, 
\[
m <
\min \{n/p, n/2\} - \max\{n/p_1, n/2\} - \max\{n/p_2, n/2\} 
+s_1
+s_2
-s,
\]
then we can relax those restrictions. 
See Theorem \ref{main-thm+} (2).

Secondly, 
it becomes more difficult than the linear case to treat the boundedness in Sobolev spaces 
for the bilinear case.
Roughly speaking, Theorem B can be derived from Theorem A and symbolic calculus in $S^m_{0,0}$.
However, the same argument 
does not work for the bilinear case.
Indeed, 
by the same reasons as in the linear case,
the $h^{p_1}_{s_1} \times h^{p_2}_{s_2} \to h^{p}_{s}$
boundedness of $T_{\sigma}$ with $\sigma \in BS^m_{0,0}$ 
follows from the $h^{p_1} \times h^{p_2} \to h^{p}$ 
boundedness of $T_{\tau}$
given by
\begin{equation}\label{symbol***}
T_{\tau}(f_1, f_2)
=
(I-\Delta)^{s/2}
T_{\sigma}((I-\Delta)^{-s_1/2}f_1, (I-\Delta)^{-s_2/2}f_2).
\end{equation}
However, 
in contrast to the linear case,
the symbol $\tau$ is not always 
within the framework of
the bilinear H\"ormander symbol class.
For instance,  the function
\[
\sigma(\xi_1, \xi_2) = \sum_{\nu_1, \nu_2 \in \Z^n} (1+|\nu_1|+|\nu_2|)^{m} \vphi(\xi_1 -\nu_1) \vphi(\xi_2 -\nu_2),
\]
where $\vphi$ is a function in $\Sh(\R^n)$ such that 
$\supp \vphi \subset [-1/10, 1/10]^n$, 
belongs to the the class $BS^m_{0,0}$. 
Then $\tau$ given by \eqref{symbol***} can be written as
\[
\tau(\xi_1, \xi_2)
=
(1+|\xi_1+\xi_2|^2)^{s/2}
(1+|\xi_1|^2)^{-s_1/2}
(1+|\xi_2|^2)^{-s_2/2}
\sigma(\xi_1, \xi_2).
\]
Thus we see that this $\tau$ satisfies
\begin{equation}\label{decaynew*}
|\pa_{x}^{\alpha} \pa_{\xi_1}^{\beta_1} \pa_{\xi_2}^{\beta_2} \tau(\xi_1, \xi_2)|
\lesssim
(1+|\xi_1+\xi_2|)^{s}
(1+|\xi_1|)^{-s_1}
(1+|\xi_2|)^{-s_2}
(1+|\xi_1| + |\xi_2|)^m.
\end{equation}
It should be emphasized that this estimate 
cannot be improved.
Hence we need to consider more general symbol classses 
than the bilinear H\"ormander symbol classes $BS^m_{0,0}$.

In order to treat bilinear symbols satisfying \eqref{decaynew*},
we will use some $S_{0,0}$-type 
symbol classes and prove the boundedness of
bilinear pseudo-differential operators with their symbols
(see Section \ref{secS00type} for details).


We end this section by explaining the contents of this paper. 
In Section \ref{prelim}, 
we will give the basic notations, the definitions and properties of 
function spaces and a key estimate which will be useful in the proof
of Theorem \ref{main-thm}.
In Section \ref{secS00type}, 
the boundedness results for bilinear pseudo-differential operators
of $S_{0,0}$-type are given.
In Section \ref{sec-mainthm}, 
we introduce more general statement than Theorem \ref{main-thm} and prove it.
In Section \ref{sec-necessity}, 
we give the proof of Theorem \ref{main-thm2}.
In Appendix, 
we explain details on the interpolation argument used in the proof of Theorem \ref{main-thm+}.
\section{Preliminaries}
\label{prelim}
\subsection{Basic notations}
Let $\Sh(\R^n)$ and $\Sh'(\R^n)$ be the Schwartz space of
rapidly decreasing smooth functions on $\R^n$ and 
the space of tempered distributions, respectively.
We define the Fourier transform $\F f$
and the inverse Fourier transform $\F^{-1}f$
of $f \in \Sh(\R^n)$ by
\[
\F f(\xi)
=\widehat{f}(\xi)
=\int_{\R^n}e^{-ix\cdot\xi} f(x)\, dx
\quad \text{and} \quad
\F^{-1}f(x)
=\frac{1}{(2\pi)^n}
\int_{\R^n}e^{i \xi \cdot x} f(\xi)\, d\xi.
\]
For $m \in L^{\infty}(\R^n)$,
the Fourier multiplier operator $m(D)$ is defined by
$m(D)f=\F^{-1}[m\widehat{f}]$ for $f \in \Sh(\R^n)$.

For two nonnegative quantities $A$ and $B$,
the notation $A \lesssim B$ means that
$A \le CB$ for some unspecified constant $C>0$.
We also write $A \approx B$ if
$A \lesssim B$ and $B \lesssim A$.

We write 
$\la \xi \ra = (1+|\xi|^2)^{1/2}$ for $\xi \in \R^n$
and
$\la (\xi_1, \xi_2) \ra = (1+|\xi_1|^2 + |\xi_2|^2)^{1/2}$ for $(\xi_1, \xi_2) \in (\R^n)^2$.

For $0 < p \le \infty$,
$p'$ denotes the conjugate exponent of $p$, that is,
$p^{\prime} = \frac{p}{p-1}$ if $1< p \le \infty$ and $p^{\prime} =\infty$ if $0< p \le 1$.
\subsection{Function spaces}


Let $\phi \in \Sh(\R^n)$ be such that $\int_{\R^n}\phi(x)\, dx \neq 0$ and 
set $\phi_t(x) = t^{-n} \phi(t^{-1}x)$ for $t > 0$.
The local Hardy space $h^p = h^{p}(\R^n)$, $0< p \le \infty$,
consists of all $f \in \Sh^{\prime}(\R^n)$ such that
$\|f\|_{h^p} = \|\sup_{0 < t < 1}|\phi_t * f|\|_{L^p} < \infty$.
It is known that the space $h^p$ is independent of the choice of the function $\phi$.
It is also known that $h^p = L^p$ if $1 < p \le \infty$ 
and that  the embedding $h^1 \hookrightarrow L^1$ holds.

The space $bmo = bmo(\R^n)$ consists of all locally integrable functions $f$ on $\R^n$ 
such that
\[
\|f\|_{bmo}
=
\sup_{|Q| \le 1}
\frac{1}{|Q|}
\int_{Q}
|f(x)-f_Q|
\,
dx
+
\sup_{|Q| \ge 1}
\frac{1}{|Q|}
\int_{Q}
|f(x)|
\,
dx
<
\infty,
\]
where $f_Q = \frac{1}{|Q|} \int_Q f(x) dx$ and $Q$ runs over all cubes in $\R^n$.
It is known that the embedding $L^{\infty} \hookrightarrow bmo$ holds and 
that the dual space of $h^1$ is $bmo$.
See Goldberg \cite{Goldberg} for details on the spaces $h^p$ and $bmo$.

For $s \in \R$, we define
\[
h^p_s
=
h^p_s(\R^n)
=
\left\{
f \in \Sh^{\prime}(\R^n)
:
 \|f\|_{h^p_s} = \|(I -\Delta)^{s/2} f \|_{h^p}
 <
 \infty
\right\},
 \quad
 0 < p \le \infty,
\]
and
\[
bmo_s
=
bmo_s(\R^n)
=
\left\{
f \in \Sh^{\prime}(\R^n)
:
\|f\|_{bmo_s} = \|(I -\Delta)^{s/2} f \|_{bmo} < \infty
\right\},
\]
where $(I- \Delta)^{s/2}f = \F^{-1}[\la\cdot \ra^s \widehat{f}]$.
If $1 < p < \infty$, the space $h^p_s = L^p_s$ is a usual $L^p$-based Sobolev space.

We recall the Littlewood-Paley characterization of the space $h^p_s$.
Let $\psi_0 \in \Sh(\R^n)$ be such that 
\begin{equation}\label{supppsi0}
\supp \psi_0 \subset \{\xi \in \R^n \,:\, |\xi| \le 2\} 
\quad 
\text{and} 
\quad
\psi_0 = 1 
\quad 
\text{on} 
\quad 
\{\xi \in \R^n \,:\, |\xi| \le 1\}.
\end{equation}
We set $\psi_k(\xi) = \psi_0(2^{-k} \xi) - \psi_0( 2^{-k+1} \xi)$ , $k \ge 1$.
Then
\begin{align}
&\supp \psi_k \subset \{2^{k-1} \le |\xi| \le 2^{k+1}\}, 
\quad
k=1,2, \dots, 
\label{supppsij}
\\
&\sum_{k \ge 0} \psi_k(\xi) =1,
\quad
\xi \in \R^n.
\label{sumpsij}
\end{align}
It is known that  
the space $h^{p}_s$ can be characterized as
\[
\|f\|_{h^p_s}
\approx
\left\|
\left(
\sum_{k = 0}^{\infty}
2^{2ks}
|\psi_{k}(D)f|^2
\right)^{1/2}
\right\|_{L^p}
\]
if $0< p < \infty$. It is also known that the equivalence is independent of the choice of $\{\psi_k\}$.
For more details on this characterization, 
see e.g., Triebel \cite[Sections 2.3.8 and 2.5.8]{Triebel-ToFS}.

Next, we recall the definition of Wiener amalgam spaces. 
Let $\phi \in C_0^{\infty}(\R^n)$ be such that
\begin{equation}\label{partition-Triebel}
\left|
\sum_{\nu \in \Z^n}
\phi(\xi-\nu)
\right|
\ge
1,
\quad
\xi \in \R^n.
\end{equation}
For $0< p, q \le \infty$ and $s \in \R$, the Wiener amalgam space $W^{p,q}_s  = W^{p,q}_{s}(\R^n)$ consists of all  $f \in \Sh^{\prime}(\R^n)$ such that
\[
\|f\|_{W^{p,q}_{s}}
=
\left\|
\left(
\sum_{\nu \in \Z^n}
\la
\nu
\ra^{sq}
|\phi(D-\nu)f|^{q}
\right)^{1/q}
\right\|_{L^{p}}
<
\infty
\]
with usual modification if $q=\infty$.
We write $W_{0}^{p, q}= W^{p, q}$.
It is known that the definition of Wiener amalgam spaces does not depend on the choice of
the function $\phi$ satisyfing \eqref{partition-Triebel}.
See Triebel \cite{Triebel-Wieneramalgam} for more details on the definition of  Wiener amalgam spaces.

The inclusion relations between Wiener amalgam spaces and classical function spaces are well studied.  In particular, we will use the following embedding properties.

\begin{prop}\label{prop-inclusion-Wieneramalgam}
Let $0< p_1, p_2, p, q_1, q_2 \le \infty$ and $s \in \R$.
\begin{enumerate}
\item 
If $0<p_1 \le p_2 \le \infty$ and $0 < q_1 \le q_2 \le \infty$,
then
$
W^{p_1,q_1}_{s} \hookrightarrow W^{p_2, q_2}_{s}
$.
\item
If $0< p < \infty$, then
$
W^{p, 2}_{\alpha(p)} 
\hookrightarrow 
h^p 
\hookrightarrow 
W^{p, 2}_{\beta(p)} 
$ 
with
\[
\alpha(p)
=
\max
\left\{
0,\,
\frac{n}{2}
-
\frac{n}{p}
\right\},
\quad
\beta(p)
=
\min
\left\{
0,\,
\frac{n}{2}
-
\frac{n}{p}
\right\}.
\]
\item
$W^{\infty, 2}_{n/2} \hookrightarrow bmo \hookrightarrow W^{\infty, 2}$.
\end{enumerate}
\end{prop}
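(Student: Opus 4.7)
The proposition collects three standard inclusion results for the spaces $W^{p,q}_s$; all of them hinge on the Fourier band-limitation of the pieces $\phi(D-\nu)f$, whose spectra sit in a ball of uniformly bounded radius around $\nu$.

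For part (1), I would split the embedding $W^{p_1,q_1}_s \hookrightarrow W^{p_2,q_2}_s$ into two monotonicities. The $\ell^q$ direction is pointwise: since $q_1 \le q_2$, applying the elementary inequality $\ell^{q_1}\hookrightarrow\ell^{q_2}$ to $a_\nu(x) = \langle\nu\rangle^s |\phi(D-\nu)f(x)|$ and then taking the $L^{p_1}$-norm in $x$ yields $W^{p_1,q_1}_s \hookrightarrow W^{p_1,q_2}_s$. For the $p$-direction ($p_1 \le p_2$), I would exploit that each $\phi(D-\nu)f$ has Fourier support in a fixed ball $B(\nu,R)$; a Plancherel--P\'olya maximal estimate then majorizes $|\phi(D-\nu)f|$ pointwise by the Hardy--Littlewood maximal function of $|\phi(D-\nu)f|^r$ raised to the power $1/r$ (for some $r < p_1$, with bounds uniform in $\nu$), whence a vector-valued Fefferman--Stein inequality delivers the required $L^{p_2}$-bound.

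For parts (2) and (3), I would invoke the Littlewood--Paley characterizations of $h^p$ and $bmo$ recalled in this section. The key idea is that the dyadic annulus $\{|\xi|\approx 2^k\}$ is covered by $\approx 2^{kn}$ unit cubes indexed by $\nu$ with $|\nu|\approx 2^k$, and for such $\nu$ one has $\phi(D-\nu)f = \phi(D-\nu)\widetilde{\psi}_k(D)f$ for a slight enlargement $\widetilde{\psi}_k$ of $\psi_k$. Comparing
$$
\Bigl(\sum_{|\nu|\approx 2^k} |\phi(D-\nu)f|^2\Bigr)^{1/2} \quad\text{and}\quad |\psi_k(D)f|
$$
in $L^p$ via Plancherel (at $p=2$) together with Bernstein/Nikolskii on each annulus introduces correction factors of order $2^{kn|1/2-1/p|}$; pairing the dyadic weight $2^{ks}$ with the Wiener weight $\langle\nu\rangle^s$ and tracking the direction of the Bernstein gain produces precisely the exponents $\alpha(p)$ and $\beta(p)$ in part (2), and the endpoint weights $n/2$ and $0$ in part (3).

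The main technical hurdle is carrying out the rearrangement between the dyadic Littlewood--Paley decomposition and the unit-cube decomposition defining the Wiener amalgam norm cleanly, and tracking the correct exponents in the Bernstein/Nikolskii corrections, especially at the endpoints. Since the details are classical and essentially contained in Triebel's monograph \cite{Triebel-Wieneramalgam}, I would follow that reference for the routine calculations.
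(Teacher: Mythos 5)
Your sketch does not match what is actually needed here, and in parts (2)--(3) it has genuine gaps. For the record, the paper does not reprove these embeddings: (1) is quoted from \cite[Proof of Lemma 2.2]{KMT-multi-2022}, (2) is quoted from \cite{CKS} and \cite{GWYZ}, and (3) is deduced by duality from the $p=1$ case of (2), i.e.\ from $W^{1,2}\hookrightarrow h^1 \hookrightarrow W^{1,2}_{-n/2}$ together with $(h^1)^*=bmo$ and the duality of the amalgam norms. Your plan instead tries to prove (2) and (3) directly by comparing the unit-cube decomposition with the dyadic Littlewood--Paley decomposition ``via Plancherel at $p=2$ plus Bernstein/Nikolskii corrections,'' and defers the details to \cite{Triebel-Wieneramalgam}. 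That deferral is to the wrong source: the sharp embeddings $W^{p,2}_{\alpha(p)}\hookrightarrow h^p \hookrightarrow W^{p,2}_{\beta(p)}$ for all $0<p<\infty$ (in particular $p\le 1$) are precisely the results of \cite{CKS} and \cite{GWYZ}, not routine calculations from 1983. Moreover the heuristic does not produce the claimed factors $2^{kn|1/2-1/p|}$: for $p<2$ the obvious route $\ell^p\hookrightarrow\ell^2$ over the $\approx 2^{kn}$ cubes in the $k$-th annulus gives a loss $2^{kn/p}$, which is off by $2^{kn/2}$ from the exponent $\beta(p)=n/2-n/p$ you need; obtaining the sharp exponent requires genuine orthogonality/duality/interpolation input (and, for $h^p$ with $p\le1$, atomic or interpolation arguments), none of which is indicated. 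For (3) the problem is worse: $bmo$ does not admit the square-function Littlewood--Paley characterization your comparison relies on (its characterization is of Carleson-measure type), so the argument as stated does not even start; the natural proof is the duality reduction to $p=1$ that the paper uses.

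Part (1) is closer to the standard argument, but one step is misattributed: the vector-valued Fefferman--Stein inequality does not raise the Lebesgue exponent from $p_1$ to $p_2$. The mechanism that does is the amalgam structure: using the uniform band-limitation of $\phi(D-\nu)f$, one controls the $L^\infty$-norm on each unit cube $Q_m$ by a Peetre/Plancherel--P\'olya maximal average, and then passes from $p_1$ to $p_2$ through the elementary inclusion $\ell^{p_1}(\Z^n)\hookrightarrow\ell^{p_2}(\Z^n)$ applied to the sequence $m\mapsto\|\cdot\|_{L^\infty(Q_m)}$; this is the argument in \cite[Lemma 2.2]{KMT-multi-2022}. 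So: (1) is repairable with this correction, but for (2) you should either cite \cite{CKS, GWYZ} as the paper does or supply a substantially more careful proof, and (3) should be derived by duality from the $p=1$ case of (2) rather than by a Littlewood--Paley comparison.
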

The assertion (1) is well known, and
its proof  can be found in, e.g., \cite[Proof of Lemma 2.2]{KMT-multi-2022}.
The assertion (2) was given in  \cite[Theorems 1.1 and 1.2]{CKS} and \cite[Theorem 1.2]{GWYZ}. 
Finally, the assertion (3) follows from the duality argument and
the embedding $W^{1,2} \hookrightarrow h^1 \hookrightarrow W^{1,2}_{-n/2}$.

\subsection{Key estimate}

The following lemma plays an important role to prove Theorem \ref{est-sep}.

\begin{lem}\label{lem-Bclass}
Let $a_1, a_2 \in \R$ and let 
$V(\nu_1, \nu_2)$ be one of the following functions on $\Z^n \times \Z^n$;
\[
\la \nu_1 \ra^{a_1}
\la \nu_2 \ra^{a_2},
\quad
\la \nu_1+\nu_2 \ra^{a_1}
\la \nu_2 \ra^{a_2},
\quad
\la \nu_1 \ra^{a_1}
\la \nu_1+ \nu_2 \ra^{a_2}.
\]
\begin{enumerate}
\item
If $a_1, a_2 \in \R$ satisfy 
\begin{align*}
a_1, a_2 < 0
\quad
\text{and}
\quad 
a_1+ a_2 = -n/2,
\end{align*}
then the following estimate holds for all nonegative functions $A_1$ and $A_2$ on $\Z^n$;
\begin{equation}\label{key-est*}
\left\|
\sum_{\substack{ \nu_1, \nu_2 \in \Z^n \\ \nu_1+\nu_2=\mu}}
V(\nu_1, \nu_2)
A_1(\nu_1)
A_2(\nu_2)
\right\|_{\ell^{2}_{\mu}(\Z^n)}
\lesssim
\|A_1\|_{\ell^2(\Z^n)}
\|A_2\|_{\ell^2(\Z^n)}.
\end{equation}

\item
If $a_1, a_2 \in \R$ satisfy 
\begin{align*}
a_1, a_2 \le 0
\quad
\text{and}
\quad 
a_1+ a_2 < -n/2,
\end{align*}
then the estimate \eqref{key-est*} holds for all nonegative functions $A_1$ and $A_2$ on $\Z^n$.
\end{enumerate}
\end{lem}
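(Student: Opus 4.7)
The plan is to reduce all three forms of $V$ to weighted convolutions on $\mathbb{Z}^n$ and then apply Young/H\"older-type inequalities. On the support of the inner sum we have $\nu_1+\nu_2=\mu$, so any factor of the shape $\langle\nu_1+\nu_2\rangle^{a}$ equals $\langle\mu\rangle^{a}$ and can be pulled outside. Thus the second form gives $B(\mu)=\langle\mu\rangle^{a_1}\bigl(A_1*(\langle\cdot\rangle^{a_2}A_2)\bigr)(\mu)$, the third is symmetric (swapping $A_1\leftrightarrow A_2$, $a_1\leftrightarrow a_2$), and the first is the plain weighted convolution $B=(\langle\cdot\rangle^{a_1}A_1)*(\langle\cdot\rangle^{a_2}A_2)$.

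For the strict case~(2), I would use a dyadic decomposition $\langle\nu\rangle^{a_j}\approx \sum_{k\ge 0}2^{ka_j}\chi_k(\nu)$, with $\chi_k$ the indicator of the annulus $\{|\nu|\sim 2^k\}$. For the first form, Young's inequality $\|f*g\|_{\ell^2}\le \|f\|_{\ell^2}\|g\|_{\ell^1}$ combined with the Cauchy--Schwarz bound $\|\chi_k A\|_{\ell^1}\lesssim 2^{kn/2}\|A\|_{\ell^2}$ yields
\[
\|(\chi_{k_1}A_1)*(\chi_{k_2}A_2)\|_{\ell^2}
\lesssim
\min(2^{k_1n/2},\,2^{k_2n/2})\,\|A_1\|_{\ell^2}\|A_2\|_{\ell^2}.
\]
Multiplying by $2^{k_1 a_1+k_2 a_2}$ and summing in $(k_1,k_2)$ after splitting into $k_1\le k_2$ and $k_2\le k_1$ gives a geometric series that converges precisely when $a_1+a_2<-n/2$. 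The second and third forms in Case~(2) are treated by a companion Young--H\"older chain: for a suitable exponent $q\in(2,\infty)$, Young $\ell^2*\ell^p\hookrightarrow \ell^q$ bounds $A_1*(\langle\cdot\rangle^{a_2}A_2)$ in $\ell^q$ and H\"older then absorbs the outer weight $\langle\mu\rangle^{a_1}$ to land in $\ell^2$; such a $q$ exists exactly when $|a_1|+|a_2|>n/2$.

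Case~(1) is the borderline where these dyadic sums diverge logarithmically, and is the main obstacle. My approach is to pass to Lorentz spaces: since $\langle\cdot\rangle^{a_j}\in \ell^{n/|a_j|,\infty}$, H\"older in Lorentz spaces gives $\langle\cdot\rangle^{a_j}A_j\in \ell^{s_j,2}$ with $1/s_j=|a_j|/n+1/2$. O'Neil's convolution inequality (Young for Lorentz spaces) then yields
\[
\ell^{s_1,2}*\ell^{s_2,2}\hookrightarrow \ell^{2,1}\hookrightarrow \ell^{2},
\]
since $1/s_1+1/s_2=1+(|a_1|+|a_2|)/n=3/2$; this handles the first form. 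The second and third forms follow from the same O'Neil bound applied to $A_1*(\langle\cdot\rangle^{a_2}A_2)\in \ell^{n/|a_2|,r}$ for some $r$, followed by H\"older against $\langle\mu\rangle^{a_1}\in \ell^{n/|a_1|,\infty}$ to reach $\ell^2$; this works exactly because $|a_1|+|a_2|=n/2$. An equivalent real-variable reduction for the first form uses the elementary pointwise bound $\langle\nu_1\rangle^{a_1}\langle\nu_2\rangle^{a_2}\lesssim \langle\nu_1\rangle^{-n/2}+\langle\nu_2\rangle^{-n/2}$, proved by splitting according to whether $\langle\nu_1\rangle\le\langle\nu_2\rangle$ or not and using monotonicity of $\langle\cdot\rangle^{a_j}$ to reduce to $\min(\langle\nu_1\rangle,\langle\nu_2\rangle)^{a_1+a_2}=\min^{-n/2}$; this reduces the bound to single-weight convolutions that are again controlled by the same Lorentz machinery (equivalently, by the discrete Hardy--Littlewood--Sobolev inequality).

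The main obstacle is thus the endpoint Case~(1), where classical Young's inequality loses by a logarithm and the argument relies on O'Neil's Lorentz-space convolution inequality (or the equivalent discrete Hardy--Littlewood--Sobolev bound). Once this refinement is in place, Case~(2) follows from the routine dyadic summation above, and the three forms of $V$ are treated in parallel.
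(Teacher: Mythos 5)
Your main route is sound, and for assertion (1) it is genuinely different from the paper: the paper gives no proof of (1) at all (it is quoted from \cite{KMT-multi-2022}), whereas your Lorentz-space argument --- H\"older against $\la\cdot\ra^{a_j}\in\ell^{n/|a_j|,\infty}$, then O'Neil's convolution inequality, and for the mixed forms O'Neil first and H\"older in $\mu$ afterwards, using $|a_1|+|a_2|=n/2$ --- is a correct, self-contained substitute; note that the strictness $a_1,a_2<0$ is exactly what gives $|a_j|<n/2$, so all Lorentz exponents stay admissible. For assertion (2) the paper proceeds differently and more cheaply: when $a_1,a_2<0$ it majorizes the weights to reduce to (1), and the only genuinely new case, namely $a_1=0$ or $a_2=0$, is handled by duality and two applications of Cauchy--Schwarz, using $\la\cdot\ra^{a}\in\ell^2$ for $a<-n/2$.

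That boundary case is where your write-up has a real gap. If, say, $a_2=0$ (which (2) allows), the double dyadic sum $\sum_{k_1,k_2}2^{k_1a_1+k_2a_2}\min(2^{k_1n/2},2^{k_2n/2})$ diverges, since the sum over $k_2\ge k_1$ of $2^{k_2a_2}=1$ is infinite; so it is not true that the geometric series ``converges precisely when $a_1+a_2<-n/2$'' --- convergence also needs $a_1,a_2<0$, and that case is already implied by (1) via monotonicity. Likewise, your Young--H\"older chain for the mixed forms admits no $q\in(2,\infty)$ when the weight on the convolved factor vanishes. Both points are easily repaired (Young $\ell^1*\ell^2\to\ell^2$ with $\|\la\cdot\ra^{a}A\|_{\ell^1}\lesssim\|A\|_{\ell^2}$ for $a<-n/2$; or $\ell^2*\ell^2\to\ell^\infty$ followed by multiplication by $\la\mu\ra^{a_1}\in\ell^2$; or sum the pieces $k_2\gg k_1$ in $\ell^2$ using that their outputs live in disjoint annuli), but as written (2) is only proved for strictly negative exponents. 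Separately, delete the ``equivalent real-variable reduction'' of case (1): the pointwise bound $\la\nu_1\ra^{a_1}\la\nu_2\ra^{a_2}\lesssim\la\nu_1\ra^{-n/2}+\la\nu_2\ra^{-n/2}$ is true but lossy, and the single-weight estimate it reduces to, i.e.\ \eqref{key-est*} with $(a_1,a_2)=(-n/2,0)$, is false: taking $A_1=\la\cdot\ra^{-n/2}\mathbf{1}_{\{|\nu|\le R\}}$, $A_2=\mathbf{1}_{\{|\nu|\le R\}}$ and testing against $\mathbf{1}_{\{|\nu|\le R\}}$ shows a loss of order $(\log R)^{1/2}$. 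This is consistent with the necessity of $a_1,a_2<0$ in (1); at the endpoint only the genuinely two-weight argument (your O'Neil step, or the result quoted in the paper) works, and it cannot be routed through a single $-n/2$ weight.
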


\begin{proof}
The assertion (1) of this lemma is proved in \cite[Lemma 2.4]{KMT-multi-2022}.
The assertion (2) is also given in \cite[Proposition 3.2]{KMT-bi-L2L2},
but we give its proof for the reader's convenience.

If $a_1, a_2 < 0$ and $a_1+a_2 < -n/2$, then the desired estimate follows from the assertion (1). 
Thus, we may assume that either  $a_1$ or $a_2$ is equal to $0$. 
By symmetry, we assume that $a_2=0$.
Then, $V(\nu_1, \nu_2) = \la \nu_1 \ra^{a_1}$ or $V(\nu_1, \nu_2) = \la \nu_1 + \nu_2 \ra^{a_1}$, and by symmetry, we only consider the former case.
By duality, it is sufficient to show that the estimate 
\begin{align*}
\sum_{\nu_1, \nu_2 \in \Z^n}
\la \nu_1 \ra^{a_1}
A_0(\nu_1+\nu_2)
A_1(\nu_1)
A_2(\nu_2)
\lesssim
\|A_0\|_{\ell^2}
\|A_1\|_{\ell^2}
\|A_2\|_{\ell^2}
\end{align*}
holds for all nonnegative functions $A_0, A_1, A_2 \in \ell^2(\Z^n)$.
If $a_2 = 0$, then $a_1 < -n/2$, 
and hence we have by the Schwarz inequality
\begin{align*}
\sum_{\nu_1, \nu_2 \in \Z^n}
\la \nu_1 \ra^{a_1}
A_0(\nu_1+\nu_2)
A_1(\nu_1)
A_2(\nu_2)
&\le
\sum_{\nu_1 \in \Z^n}
\la \nu_1 \ra^{a_1}
A_1(\nu_1)
\|A_0(\nu_1+\nu_2)\|_{\ell^2_{\nu_2}}
\|A_2(\nu_2)\|_{\ell^2_{\nu_2}}
\\
&\lesssim
\|A_0\|_{\ell^2}
\|A_1\|_{\ell^2}
\|A_2\|_{\ell^2}.
\end{align*}
The proof of Lemma \ref{lem-Bclass} is complete.
\end{proof}

\section{Bilinear pseudo-differential operators of $S_{0,0}$-type}
\label{secS00type}
To prove Theorem \ref{main-thm}, 
we will use the following  three $S_{0,0}$-type symbol classes.

\begin{dfn}
Let $m_1, m_2 \in \R$.
\begin{enumerate}
\item
The class
$BS^{(m_1, m_2)}_{0,0}$ 
denotes the set of all smooth functions
$\sigma = \sigma(x, \xi_1,\xi_2)$ on $(\R^n)^3$
that satisfy
\[
|
\partial_x^{\alpha}\partial_{\xi_1}^{\beta_1}\partial_{\xi_2}^{\beta_2}
\sigma(x, \xi_1, \xi_2)
|
\le
C_{\alpha, \beta_1, \beta_2}
(1+|\xi_1|)^{m_1}
(1+|\xi_2|)^{m_2}
\]
for all $\alpha, \beta_1, \beta_2 \in \N_0^n$.
\item
The classes
$BS^{(m_1, m_2), *1}_{0,0}$ and  $BS^{(m_1, m_2), *2}_{0,0}$
denote the sets of all smooth functions
$\sigma = \sigma(x, \xi_1,\xi_2)$ on $(\R^n)^3$
that satisfy
\[
|
\partial_x^{\alpha}\partial_{\xi_1}^{\beta_1}\partial_{\xi_2}^{\beta_2}
\sigma(x, \xi_1, \xi_2)
|
\le
C_{\alpha, \beta_1, \beta_2}
(1+|\xi_1+\xi_2|)^{m_1}
(1+|\xi_2|)^{m_2}
\]
and
\[
|
\partial_x^{\alpha}\partial_{\xi_1}^{\beta_1}\partial_{\xi_2}^{\beta_2}
\sigma(x, \xi_1, \xi_2)
|
\le
C_{\alpha, \beta_1, \beta_2}
(1+|\xi_1|)^{m_1}
(1+|\xi_1+\xi_2|)^{m_2}
\]
for all $\alpha, \beta_1, \beta_2 \in \N_0^n$, respectively.
\end{enumerate}
\end{dfn}
\noindent
Notice that 
$BS^m_{0,0} \subset BS^{(m_1, m_2)}_{0,0}$ 
and 
$BS^m_{0,0} \subset BS^{(m_1, m_2), *j}_{0,0}$, $j=1,2$,
if $m_1, m_2 \le 0$ and $m_1+m_2=m$.

We consider the boundedness of bilinear pseudo-differential operators with symbols in the above classes.
The following theorem plays an essential role to prove Theorem \ref{main-thm}.

\begin{thm}
\label{est-sep}
Let $0 <p_1, p_2, p \le \infty$, $1/p \le 1/p_1 + 1/p_2$.
We assume that $m_1$ and $m_2$ satisfy
\begin{equation}\label{condition-m1m2}
m_1+m_2 
=
\min
\left\{
\frac{n}{p}, 
\frac{n}{2}
\right\}
-
\max
\left\{
\frac{n}{p_1},
\frac{n}{2}
\right\}
-
\max
\left\{
\frac{n}{p_2},
\frac{n}{2}
\right\}.
\end{equation}

\begin{enumerate}
\item
If  $0< p < \infty$,
and
if $m_1$ and $m_2$ satisfy
\begin{align}\label{condition-mj}
\begin{split}
&
-
\max
\left\{
\frac{n}{p_1},
\frac{n}{2}
\right\}
<
m_1
<
\min
\left\{
\frac{n}{p}, 
\frac{n}{2}
\right\}
-
\max
\left\{
\frac{n}{p_1},
\frac{n}{2}
\right\},
\\
&
-
\max
\left\{
\frac{n}{p_2},
\frac{n}{2}
\right\}
<
m_2
<
\min
\left\{
\frac{n}{p}, 
\frac{n}{2}
\right\}
-
\max
\left\{
\frac{n}{p_2},
\frac{n}{2}
\right\},
\end{split}
\end{align}
then 
$
\Op(BS^{(m_1, m_2)}_{0, 0})
\subset
B(h^{p_1} \times h^{p_2} \to h^p).
$
\item
If $1< p_1 \le \infty$,
and if $m_1$ and $m_2$ satisfy
\begin{align}\label{assumption-dual1}
\begin{split}
&
-\max
\left\{
\frac{n}{p^{\prime}}, 
\frac{n}{2}
\right\}
<
m_1
<
\min
\left\{
\frac{n}{p_1^\prime}, 
\frac{n}{2}
\right\}
-\max
\left\{
\frac{n}{p^{\prime}}, 
\frac{n}{2}
\right\},
\\
&
-\max
\left\{
\frac{n}{p_2}, 
\frac{n}{2}
\right\}
<
m_2
<
\min
\left\{
\frac{n}{p_1^\prime}, 
\frac{n}{2}
\right\}
-\max
\left\{
\frac{n}{p_2}, 
\frac{n}{2}
\right\},
\end{split}
\end{align}
then 
$
\Op(BS^{(m_1, m_2), *1}_{0, 0})
\subset
B(L^{p_1} \times h^{p_2} \to h^p).
$
\item
If $1< p_2 \le \infty$,
and if $m_1$ and $m_2$ satisfy
\begin{align}\label{assumption-dual2}
\begin{split}
&
-\max
\left\{
\frac{n}{p_1}, 
\frac{n}{2}
\right\}
<
m_1
<
\min
\left\{
\frac{n}{p_2^\prime}, 
\frac{n}{2}
\right\}
-\max
\left\{
\frac{n}{p_1}, 
\frac{n}{2}
\right\},
\\
&
-\max
\left\{
\frac{n}{p^{\prime}}, 
\frac{n}{2}
\right\}
<
m_2
<
\min
\left\{
\frac{n}{p_2^\prime}, 
\frac{n}{2}
\right\}
-\max
\left\{
\frac{n}{p^\prime}, 
\frac{n}{2}
\right\},
\end{split}
\end{align}
then 
$\Op(BS^{(m_1, m_2), *2}_{0, 0}) \subset B(h^{p_1} \times L^{p_2} \to h^p)$.
\end{enumerate}
Here $h^p$ (resp. $h^{p_1}$ ($L^{p_1}$), $h^{p_2}$ ($L^{p_2}$)) should be replaced by $bmo$ if 
$p= \infty$ 
(resp. $p_1=\infty$, $p_2= \infty$).
\end{thm}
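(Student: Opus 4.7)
My plan is to prove (1) directly by the Wiener amalgam decomposition argument of \cite[Proof of Theorem 1.2]{KMT-multi-2022}, and to deduce (2), (3) from (1) by passing to the first, resp.\ second, bilinear adjoint of $T_\sigma$. For (1), fix $\phi \in C_0^\infty(\R^n)$ with $\sum_{\nu \in \Z^n} \phi(\xi - \nu) \equiv 1$ on $\R^n$ and decompose
\[
\sigma(x, \xi_1, \xi_2)
= \sum_{\nu_1, \nu_2 \in \Z^n} \sigma_{\nu_1, \nu_2}(x, \xi_1, \xi_2),
\qquad
\sigma_{\nu_1, \nu_2} = \sigma \cdot \phi(\xi_1 - \nu_1)\phi(\xi_2 - \nu_2).
\]
Each piece is supported in $|\xi_j - \nu_j| \le 1$ and satisfies $|\pa_x^\alpha \pa_{\xi_1}^{\beta_1} \pa_{\xi_2}^{\beta_2} \sigma_{\nu_1, \nu_2}| \lesssim \la \nu_1\ra^{m_1} \la\nu_2\ra^{m_2}$. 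Integration by parts in $\xi_1, \xi_2$ yields spatial decay of the kernel, and the uniform boundedness of $\pa_x^\alpha \sigma_{\nu_1, \nu_2}$ forces $\phi(D-\mu) T_{\sigma_{\nu_1, \nu_2}}(f_1, f_2)$ to decay rapidly in $|\mu - \nu_1 - \nu_2|$; combining these yields the pointwise bound
\[
|\phi(D-\mu) T_{\sigma_{\nu_1, \nu_2}}(f_1, f_2)(x)|
\lesssim
\la\nu_1\ra^{m_1}\la\nu_2\ra^{m_2}\la\mu - \nu_1 - \nu_2\ra^{-N} M(\phi(D-\nu_1) f_1)(x)\, M(\phi(D-\nu_2) f_2)(x)
\]
for every $N \ge 0$, where $M$ is the Hardy--Littlewood maximal operator.

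By Proposition \ref{prop-inclusion-Wieneramalgam}, the target $h^p$ (or $bmo$ when $p = \infty$) norm is dominated by the Wiener amalgam norm $W^{p,2}_{\alpha(p)}$, and $W^{p_j, 2}_{\beta(p_j)}$ is in turn dominated by the source $h^{p_j}$ (or $bmo$) norm. Substituting the pointwise estimate, invoking the Fefferman--Stein vector-valued maximal inequality in $L^p$, and summing the rapid decay in $\mu$ reduces the boundedness of $T_\sigma$ to the discrete weighted $\ell^2$ inequality
\[
\left\|\sum_{\nu_1+\nu_2=\mu} \la\nu_1+\nu_2\ra^{\alpha(p)} \la\nu_1\ra^{m_1-\beta(p_1)} \la\nu_2\ra^{m_2-\beta(p_2)} A_1(\nu_1) A_2(\nu_2)\right\|_{\ell^2_\mu}
\lesssim
\|A_1\|_{\ell^2}\|A_2\|_{\ell^2}.
\]
Since $\alpha(p) \ge 0$, the bound $\la\nu_1+\nu_2\ra^{\alpha(p)} \lesssim \max(\la\nu_1\ra,\la\nu_2\ra)^{\alpha(p)}$ lets me split on $\la\nu_1\ra \le \la\nu_2\ra$ versus $\la\nu_1\ra > \la\nu_2\ra$ and cast the weight in each region as a pure product $\la\nu_1\ra^{a_1}\la\nu_2\ra^{a_2}$. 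Lemma \ref{lem-Bclass}(1) then applies whenever $a_1, a_2 < 0$ and $a_1 + a_2 = -n/2$; via the algebraic identity $\beta(p_j) - \alpha(p) = \min\{n/p, n/2\} - \max\{n/p_j, n/2\}$ these constraints coincide with \eqref{condition-mj} together with \eqref{condition-m1m2}, closing (1).

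For (2), I use duality. A direct computation of the first bilinear adjoint $T_\sigma^{*1}$ shows that, in the adjoint's own input frequencies $(\eta_1, \eta_2)$, its symbol $\sigma^{*1}$ satisfies $|\sigma^{*1}(y_1, \eta_1, \eta_2)| \lesssim \la\eta_1\ra^{m_1}\la\eta_2\ra^{m_2}$ when $\sigma \in BS^{(m_1, m_2), *1}_{0, 0}$: the first input frequency of the adjoint is $\eta_1 = -(\xi_1 + \xi_2)$, so the original weight $\la\xi_1+\xi_2\ra^{m_1}$ becomes $\la\eta_1\ra^{m_1}$. Thus $\sigma^{*1} \in BS^{(m_1, m_2)}_{0, 0}$, and applying (1) to $T_{\sigma^{*1}}$ with parameter triple $(q_1, q_2, q) = (p', p_2, p_1')$ yields $h^{p'} \times h^{p_2} \to h^{p_1'}$ boundedness, which on dualizing gives the claimed $L^{p_1} \times h^{p_2} \to h^p$ bound for $T_\sigma$. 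The hypotheses of (1) at $(p', p_2, p_1')$ are exactly \eqref{assumption-dual1}, and the sum condition \eqref{condition-m1m2} is invariant under this parameter swap thanks to the identity $\min\{n/p_1', n/2\} - \max\{n/p', n/2\} = \min\{n/p, n/2\} - \max\{n/p_1, n/2\}$. Case (3) is treated symmetrically with the second adjoint.

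The main obstacle is the exponent bookkeeping at the reduction to Lemma \ref{lem-Bclass}: the discrete weight depends trivariately on $\nu_1$, $\nu_2$, and $\nu_1 + \nu_2$, whereas Lemma \ref{lem-Bclass} covers only bivariate products, and the naive bound $\la\nu_1+\nu_2\ra^{\alpha(p)} \lesssim (\la\nu_1\ra\la\nu_2\ra)^{\alpha(p)}$ oversells by $\alpha(p)$ per factor and destroys the sharpness of \eqref{condition-m1m2}. The sharper $\max$-bound together with case-splitting is therefore indispensable, and its two cases produce respectively the upper and lower halves of the two-sided constraints in \eqref{condition-mj}. Secondary care is needed to extract the rapid decay $\la\mu-\nu_1-\nu_2\ra^{-N}$ via systematic integration by parts in the output frequency using the uniform $x$-derivative bounds on $\sigma_{\nu_1, \nu_2}$, to handle the endpoints $p, p_j \in \{1, \infty\}$ where $bmo$ replaces $h^p$ or $h^{p_j}$ and Proposition \ref{prop-inclusion-Wieneramalgam}(3) replaces (2), and to justify duality in (2), (3) at those endpoints where $(h^p)^*$ fails to be reflexive.
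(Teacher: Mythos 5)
Your plan for part (1) essentially reproduces the Wiener-amalgam/discrete reduction of Kato--Miyachi--Tomita (which the paper simply cites), and the exponent bookkeeping there is consistent: your case-splitting produces the conditions $-\max\{n/p_j,n/2\}<m_j<n/2-\max\{n/p_j,n/2\}$, which are equivalent to \eqref{condition-mj} under \eqref{condition-m1m2}, as the paper's remark notes. The genuine problems are in your deduction of (2) and (3) by duality. First, the duality step cannot cover the full range of the statement: parts (2) and (3) are asserted for all $0<p\le\infty$ (only $p_1>1$, resp.\ $p_2>1$, is assumed), and the admissible range includes targets $h^p$ with $p<1$ (e.g.\ $p_1=2$, $p_2=1/2$, $p=2/5$). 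For $p<1$ the quasi-norm of $h^p$ is not recoverable from any dual pairing --- testing $T_\sigma(f_1,f_2)$ against a dual class only controls a strictly weaker norm (the second-dual norm), exactly as $\sup_{\|g\|_{\ell^\infty}\le1}|\langle a,g\rangle|$ gives the $\ell^1$ rather than the $\ell^p$ norm --- so your scheme ``bound $T_{\sigma^{*1}}\colon h^{p'}\times h^{p_2}\to h^{p_1'}$ and dualize'' simply does not produce the claimed $L^{p_1}\times h^{p_2}\to h^p$ bound in that range. These sub-$L^1$ targets are not a fringe case: the paper needs Theorem \ref{est-sep}(2) with $0<p<\infty$ (including $p\le1$) in the proof of Proposition \ref{mainprop}.

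Second, your transposition claim is not justified at the stated level of generality. The identity $\sigma^{*1}(\xi_1,\xi_2)=\sigma(-\xi_1-\xi_2,\xi_2)$, which converts the weight $\langle\xi_1+\xi_2\rangle^{m_1}\langle\xi_2\rangle^{m_2}$ into $\langle\xi_1\rangle^{m_1}\langle\xi_2\rangle^{m_2}$, is a ``direct computation'' only for $x$-independent multipliers. For general $x$-dependent symbols in $BS^{(m_1,m_2),*1}_{0,0}$ the transpose symbol is an oscillatory integral in which the $x$- and frequency-variables are coupled, and proving that it stays in the $S_{0,0}$-type class $BS^{(m_1,m_2)}_{0,0}$ is a nontrivial symbolic-calculus theorem (compare \cite[Theorem 1.2]{BMNT}, which is exactly such a result for $BS^m_{0,0}$ and is quoted as a theorem, not a computation); you would have to supply this argument. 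The paper avoids both issues by proving (2) and (3) directly with the same Coifman--Meyer Fourier-series expansion used for (1): the symbol class $BS^{(m_1,m_2),*1}_{0,0}$ leads to the discrete weight $\langle\nu_1+\nu_2\rangle^{m_1+\alpha(p)}\langle\nu_2\rangle^{m_2}$, and Lemma \ref{lem-Bclass} is stated precisely so as to cover such mixed weights in $\nu_1+\nu_2$, after splitting into the regions $|\nu_1|>2|\nu_2|$, $|\nu_1|\approx|\nu_2|$, $|\nu_1|<|\nu_2|/2$. Unless you either restrict (2), (3) to $1\le p\le\infty$ or replace the duality step by such a direct argument (and prove the transposition lemma), the proposal does not establish the theorem as stated.
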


\begin{rem}
We will give some remarks on the above thorem.
\begin{enumerate}
\renewcommand{\labelenumi}{(\roman{enumi})}
\item
The assertion (1) of Theorem \ref{est-sep} 
was first proved in 
\cite[Theorem 1.5]{KMT-multi-2022} in the multilinear setting.
In \cite{KMT-multi-2022},
they proved the boundedness under the assumption 
\eqref{condition-m1m2} 
and
\begin{equation}\label{condition-mj-KMT}
-
\max
\left\{
\frac{n}{p_j},
\frac{n}{2}
\right\}
<
m_j
<
\frac{n}{2}
-
\max
\left\{
\frac{n}{p_j},
\frac{n}{2}
\right\},
\quad
j=1,2.
\end{equation}
However, we see that the condition \eqref{condition-mj} is equivalent to \eqref{condition-mj-KMT} 
under the assumption \eqref{condition-m1m2}.
See the following figure:

\begin{center}
\begin{tikzpicture}[scale=0.8]
\coordinate (O) at (0, 2.5);
\coordinate (A1) at (-3.2, 2.5);
\coordinate (B1) at (0, -0.2);
\coordinate (A2) at (-1.8, 2.5);
\coordinate (C2*) at (-0.5, 2.5);
\coordinate (C2**) at (-0.5, 3.5);
\coordinate (C2***) at (-0.5, -1.5);
\coordinate (C2****) at (-0.5, 2.6);
\coordinate (A2*) at (-1.8, 3.5);
\coordinate (A2**) at (-1.8, 2.6);
\coordinate (B2) at (0, 1.2);
\coordinate (D2) at (0, 1.8);
\coordinate (D2***) at (-3.8, 1.8);
\coordinate (A1B2) at (-3.2, 1.2);
\coordinate (A2B1) at (-1.8, -0.2);

\fill (C2***) circle (2.5pt);
\fill (D2***) circle (2.5pt);
\fill (A1B2) circle (3pt);
\fill (A2B1) circle (3pt);

\fill (A1) circle (2pt) node[anchor=south]
{\scriptsize$-\max\left\{\frac{n}{p_1}, \frac{n}{2}\right\} $};

\fill (B1) circle (2pt) node[anchor=west]
{\scriptsize$-\max\left\{\frac{n}{p_2}, \frac{n}{2} \right\}$};

\fill(A2) circle (2pt);
\draw[->] (A2*) -- (A2**);
\draw (A2*) node [anchor=south]
{\scriptsize$\min\left\{\frac{n}{p}, \frac{n}{2}\right\}-\max\left\{\frac{n}{p_1}, \frac{n}{2} \right\}$\qquad\qquad\qquad\qquad\qquad\qquad};

\fill(B2) circle (2pt) node[anchor=west]
{\scriptsize$\min \left\{\frac{n}{p}, \frac{n}{2}\right\}-\max\left\{\frac{n}{p_2}, \frac{n}{2} \right\}$};

\fill(C2*) circle (2pt);
\draw[->] (C2**) -- (C2****);
\draw (C2**) node [anchor=south]
{\qquad \qquad \scriptsize$\frac{n}{2}-\max\left\{\frac{n}{p_1}, \frac{n}{2} \right\}$};

\fill (D2) circle (2pt) node[anchor=west]
{\scriptsize$\frac{n}{2}-\max\left\{\frac{n}{p_2}, \frac{n}{2} \right\}$};

\draw[thick] (-4.5, 2.5) -- (0, -2);
\draw[->] (-4, -1.7) -- (-2.5, 0.25);
\draw (-4, -1.7)  node[anchor= north]
{\scriptsize $m_1+m_2=\min \left\{\frac{n}{p}, \frac{n}{2}\right\}-\max\left\{\frac{n}{p_1}, \frac{n}{2} \right\} -\max\left\{\frac{n}{p_2}, \frac{n}{2} \right\}$};

\draw[dashed, thick]
(A1) -- (A1B2)-- (B2);

\draw[dashed,  thick]
(A2) -- (A2B1) -- (B1);

\draw[dashed,  thick] (C2*) -- (C2***);
\draw[dashed,  thick] (D2) -- (D2***);

\draw[ultra thick] (A1B2) -- (A2B1);

\draw[->, very thick] (-4.6, 2.5) -- (0.5, 2.5);
\draw (0.5, 2.5)  node[anchor= west]{\scriptsize $m_1$};
\draw[->, very thick] (0, -2.1) -- (0, 3);
\draw (0, 3)  node[anchor= south]{\scriptsize $m_2$};

\end{tikzpicture}
\end{center}

\item
The condition \eqref{condition-mj} says that
we need to assume that $p \neq \infty$ for the assertion (1).
Similarly, the condition \eqref{assumption-dual1} and \eqref{assumption-dual2} yield that we need the condition $p_1 > 1$ 
for the assertion (2) 
and the condition $p_2 > 1$
for the assertion (3),
respectively.
\end{enumerate}
\end{rem}

To prove Theorem \ref{est-sep}, we will use the following partition of unity given in \cite{PS}. 

\begin{lem}[{\cite[Lemma 2.2]{PS}}]\label{lem-decomp}
For each $L \in \N$,  there exists a sequence of Schwartz functions 
$\{\chi_{\ell}\}_{\ell \in \Z^n}$ such that
\begin{align*}
\supp \chi_{\ell} \subset \ell + [-1, 1]^n,
\quad
\sup_{\ell \in \Z^n} \|\F^{-1}\chi_{\ell}\|_{L^1} < \infty,
\\
\sum_{\ell \in \Z^n}
\la \ell \ra^{-2L}
\la \xi \ra^{2L}
\chi_{\ell}(\xi)
=
1
\quad
\text{for all}
\quad
\xi \in \R^n.
\end{align*}
\end{lem}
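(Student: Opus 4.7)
The plan is to build $\chi_\ell$ by reweighting a standard smooth partition of unity on the lattice $\Z^n$. First I would fix once and for all a function $\phi \in C^\infty_c(\R^n)$ with $\supp\phi \subset [-1,1]^n$ and $\sum_{\ell \in \Z^n}\phi(\xi-\ell) \equiv 1$ on $\R^n$; such $\phi$ is obtained by the usual trick of taking a nonnegative $\theta \in C^\infty_c([-1,1]^n)$ with $\Theta(\xi):=\sum_\ell\theta(\xi-\ell)$ positive everywhere (automatic if $\theta$ is bounded below on $[-1/2,1/2]^n$) and setting $\phi:=\theta/\Theta$, where $\Theta$ is $\Z^n$-periodic. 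Then define
\[
\chi_\ell(\xi) := \la\ell\ra^{2L}\la\xi\ra^{-2L}\phi(\xi-\ell), \quad \ell \in \Z^n.
\]
Each $\chi_\ell$ lies in $C^\infty_c(\R^n)\subset \Sh(\R^n)$; the support condition $\supp\chi_\ell \subset \ell+[-1,1]^n$ is immediate, and
\[
\sum_{\ell\in\Z^n}\la\ell\ra^{-2L}\la\xi\ra^{2L}\chi_\ell(\xi) = \sum_{\ell\in\Z^n}\phi(\xi-\ell) = 1
\]
recovers the required identity. So the whole content of the lemma reduces to the uniform bound $\sup_\ell\|\F^{-1}\chi_\ell\|_{L^1}<\infty$.

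To obtain that bound I would translate the argument: writing $\chi_\ell(\xi)=g_\ell(\xi-\ell)$ with
\[
g_\ell(\eta) := \la\ell\ra^{2L}\la\ell+\eta\ra^{-2L}\phi(\eta),
\]
the shift becomes a unimodular factor under $\F^{-1}$, so $\F^{-1}\chi_\ell(x)=e^{i\ell\cdot x}\F^{-1}g_\ell(x)$ and $\|\F^{-1}\chi_\ell\|_{L^1}=\|\F^{-1}g_\ell\|_{L^1}$. It therefore suffices to check that $g_\ell$ lies in $C^\infty_c([-1,1]^n)$ with every derivative bounded uniformly in $\ell\in\Z^n$: once this is in hand, standard integration by parts gives $|\F^{-1}g_\ell(x)|\le C_N\la x\ra^{-N}$ uniformly in $\ell$, which is $L^1$ in $x$ as soon as $N>n$.

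The derivative bound is where the only computation sits, and it is elementary. On $\supp\phi$ we have $|\eta|\le\sqrt{n}$, so $\la\ell+\eta\ra\approx\la\ell\ra$ uniformly in $\ell$; a chain-rule induction then yields $|\pa_\eta^\beta\la\ell+\eta\ra^{-2L}|\lesssim_\beta\la\ell+\eta\ra^{-2L}\lesssim\la\ell\ra^{-2L}$ there, and multiplying by the prefactor $\la\ell\ra^{2L}$ and applying Leibniz to $g_\ell=\la\ell\ra^{2L}\la\ell+\eta\ra^{-2L}\cdot\phi(\eta)$ gives $\sup_{\ell,\eta}|\pa^\beta g_\ell(\eta)|\le C_\beta$ for every $\beta$. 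The one point where I expect to have to be careful is verifying that this constant $C_\beta$ is genuinely independent of $\ell$, which is exactly what the comparison $\la\ell+\eta\ra\approx\la\ell\ra$ on the fixed compact support $\supp\phi$ ensures; this is the main (and only modest) obstacle in the proof.
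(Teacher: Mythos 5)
Your construction is correct and complete: the paper itself does not prove this lemma but simply quotes it from P\"aiv\"arinta--Somersalo \cite[Lemma 2.2]{PS}, so there is no internal proof to compare against. Your argument --- reweighting a standard compactly supported partition of unity by setting $\chi_\ell(\xi)=\la\ell\ra^{2L}\la\xi\ra^{-2L}\phi(\xi-\ell)$, reducing the uniform $L^1$ bound to uniform $C^\infty$ bounds for $g_\ell(\eta)=\la\ell\ra^{2L}\la\ell+\eta\ra^{-2L}\phi(\eta)$ on a fixed compact support via $\la\ell+\eta\ra\approx\la\ell\ra$ (Peetre's inequality), and then integrating by parts to get $|\F^{-1}g_\ell(x)|\lesssim\la x\ra^{-N}$ uniformly in $\ell$ --- is exactly the standard route to this statement, and all the steps you flag (the translation-invariance of the $L^1$ norm under the modulation $e^{i\ell\cdot x}$, the $\ell$-independence of the derivative constants) are verified correctly.
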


\begin{proof}[Proof of Theorem \ref{est-sep}]
The basic ideas of the proof  below go back to \cite[Proof of Theorem 1.5]{KMT-multi-2022}. 
The assertion (1) is proved in \cite[Theorem 1.5]{KMT-multi-2022}.  
We shall give the proof of the assertion (2). The assertion (3) can be shown in the same way as below.

We define the number $p_0$ by $1/p_0 = 1/p_1+1/p_2$.
In order to obtain the desired boundedness, 
it is sufficient to prove that
\begin{equation}\label{est-sharp}
\Op(BS^{(m_1, m_2), *1}_{0,0})
\subset
B(W^{p_1, 2}_{\beta(p_1)} \times W^{p_2, 2}_{\beta(p_2)} 
\to 
W^{p_0, 2}_{\alpha(p)})
\end{equation}
under the assumption \eqref{assumption-dual1}.
In fact,  by Proposition \ref{prop-inclusion-Wieneramalgam},  we have the following embeddings;
\begin{align*}
&
h^{p_j} 
\hookrightarrow 
W^{p_j, 2}_{\beta(p_j)}
\quad
\text{if}
\quad
0< p_j < \infty,
\quad
j=1,2,
\\
&W^{p_0, 2}_{\alpha(p)}
\hookrightarrow
W^{p, 2}_{\alpha(p)}
\hookrightarrow
h^{p}
\quad
\text{if}
\quad
0< p < \infty,
\\
&
W^{p_0, 2}_{n/2}
\hookrightarrow
W^{\infty, 2}_{n/2} 
\hookrightarrow 
bmo
\hookrightarrow
W^{\infty, 2}.
\end{align*}
In order to prove \eqref{est-sharp}, 
we use the Fourier series expansion.
This method was used by Coifman and Meyer in \cite{CM, CM-2}.

Let $\vphi \in \Sh(\R^n)$ be such that
\begin{equation*}
\supp \vphi 
\subset 
[-1,1]^n,
\quad
\sum_{\nu \in \Z^n}
\vphi(\xi -\nu)
=
1, 
\quad
\xi \in \R^n.
\end{equation*}
Then, we decompose the symbol $\sigma$ as
\[
\sigma(x, \xi_1, \xi_2)
=
\sum_{(\nu_1, \nu_2) \in (\Z^n)^2}
\sigma(x, \xi_1, \xi_2)
\vphi(\xi_1-\nu_1)
\vphi(\xi_2-\nu_2)
=
\sum_{\Nu \in (\Z^n)^2}
\sigma_{\Nu}(x, \xi_1, \xi_2),
\]
where $\sigma_{\Nu}(x, \xi_1, \xi_2) = \sigma(x, \xi_1, \xi_2)
\vphi(\xi_1-\nu_1)
\vphi(\xi_2-\nu_2)$.
If we set
\[
\widetilde{\sigma}_{\Nu}(x, \xi_1, \xi_2)
=
\sum_{\mu_1, \mu_2 \in \Z^n}
\sigma_{\Nu}(x, \xi_1-2\pi\mu_1, \xi_2-2\pi\mu_2),
\]
then $\widetilde{\sigma}_{\Nu}$ is a $2\pi\Z^{2n}$-periodic function
with respect to the variables $(\xi_1, \xi_2)$.
Let $\widetilde{\vphi} \in \Sh(\R^n)$ be such that
$\supp \widetilde{\vphi} \subset [-3, 3]^n$,
$0 \le \widetilde{\vphi} \le 1$, 
and
$\widetilde{\vphi}=1$ on $[-1,1]^n$.
Since
$\widetilde{\vphi} \vphi = 1$ 
and 
$\widetilde{\sigma} = \sigma_{\Nu}$ if $(\xi_1, \xi_2) \in \Nu+[-3, 3]^{2n}$,
we have
\begin{align*}
\sigma_{\Nu}(x, \xi_1, \xi_2)
=
\sigma_{\Nu}(x, \xi_1, \xi_2)
\widetilde{\vphi}(\xi_1-\nu_1)
\widetilde{\vphi}(\xi_2-\nu_2)
=
\widetilde{\sigma}_{\Nu}(x, \xi_1, \xi_2)
\widetilde{\vphi}(\xi_1-\nu_1)
\widetilde{\vphi}(\xi_2-\nu_2).
\end{align*}
Thus, the Fourier series expansion of $\widetilde{\sigma}_{\Nu}$ gives 
\[
\sigma_{\Nu}(x, \xi_1, \xi_2)
=
\sum_{\K =(k_1, k_2) \in (\Z^n)^2}
e^{i(\xi_1 \cdot k_1+\xi_2 \cdot k_2)}
P_{\Nu, \K}(x)
\widetilde{\vphi}(\xi_1-\nu_1)
\widetilde{\vphi}(\xi_2-\nu_2),
\]
where
\[
P_{\Nu, \K}(x)
=
\frac{1}{(2\pi)^{2n}}
\int_{\Nu+[-3,3]^{2n}}
e^{-i(\eta_1 \cdot k_1+\eta_2 \cdot k_2)}
\sigma_{\Nu}(x, \eta_1, \eta_2)
\,
d\eta_1d\eta_2.
\]
Furthermore, integration by parts gives that
$P_{\Nu, \K}(x)=\la (k_1, k_2) \ra^{-2K} Q_{\Nu, \K}(x)$ with $K \in \N$ and
\[
Q_{\Nu, \K}(x)
=
\frac{1}{(2\pi)^{2n}}
\int_{\Nu+[-3,3]^{2n}}
e^{-i(\eta_1\cdot k_1+\eta_2 \cdot k_2)}
(I-\Delta_{\eta_1, \eta_2})^{K}
[
\sigma_{\nu}(x, \eta_1, \eta_2)
]
\,
d\eta_1d\eta_2.
\]
Applying Lemma \ref{lem-decomp}, we have
\[
Q_{\Nu, \K}
=
\sum_{\ell \in \Z^n}
\la \ell \ra^{-2L}
\F^{-1}[\chi_{\ell} \la \cdot\ra^{2L} \F[Q_{\Nu, \K}]]
=
\sum_{\ell \in \Z^n}
\la \ell \ra^{-2L}
Q_{\Nu, \K, \ell}
\]
with $Q_{\Nu, \K, \ell} = \F^{-1}[\chi_{\ell} \la \cdot\ra^{2L} \F Q_{\Nu, \K}] = (\F^{-1}\chi_{\ell})* (I-\Delta)^{L}Q_{\Nu, \K}$.
Combining these decomposition, we obtain
\begin{align*}
&T_\sigma(f_1, f_2)(x)
=
\sum_{\K \in (\Z^n)^2}
\sum_{\ell \in \Z^n}
\la (k_1, k_2) \ra^{-2K}
\la \ell \ra^{-2L}
\sum_{\Nu \in (\Z^n)^2}
Q_{\Nu, \K, \ell}(x)
\,
F^1_{\nu_1, k_1}(x)
F^2_{\nu_2, k_2}(x),
\end{align*}
where
$F^j_{\nu_j, k_j}(x)=\widetilde{\vphi}(D-\nu_j)f_{j}(x+k_j)$, $j=1, 2$.

Next, we shall prove the estimate
\begin{equation}\label{Q-est}
\sup_{\K, \ell}
\|Q_{\Nu, \K, \ell}\|_{L^{\infty}}
\lesssim
\la \nu_1+\nu_2 \ra^{m_1}
\la \nu_2 \ra^{m_2}.
\end{equation}
If $\sigma \in BS^{(m_1, m_2), *1}_{0,0}$, then 
\[
|\pa_{x}^{\alpha} \pa_{\xi_1}^{\beta_1} \pa_{\xi_2}^{\beta_2} \sigma_{\Nu}(x, \xi_1, \xi_2)|
\lesssim
\la \nu_1+\nu_2 \ra^{m_1}
\la \nu_2 \ra^{m_2},
\]
and this yields that
\[
\|(I-\Delta)^{L}Q_{\Nu, \K}\|_{L^{\infty}}
\lesssim
\la \nu_1+\nu_2 \ra^{m_1}
\la \nu_2 \ra^{m_2}.
\]
Hence, by Young's inequality,
\[
|Q_{\Nu, \K, \ell}(x)|
\le
\|\F^{-1}\chi_{\ell}\|_{L^1}
\|(I-\Delta)^{L}Q_{\Nu, \K}\|_{L^{\infty}}
\lesssim
\la \nu_1+\nu_2 \ra^{m_1}
\la \nu_2 \ra^{m_2}.
\]
Here we remark that the implicit constant does not depend on $\ell$, $\K$ and $x$.
Thus, we obtain \eqref{Q-est}.

Now, we set $\widetilde{p}_0 = \min \{1, p_0\}$. 
Taking the integer $K$ satisfying $K > 2n$, 
we have
\begin{align*}
&\|T_\sigma(f_1, f_2)\|_{W^{p_0, 2}_{\alpha(p)}}^{\widetilde{p}_0}
\\
&\lesssim
\sup_{\K}
\sum_{\ell \in \Z^n}
\la \ell \ra^{-L \widetilde{p}_0}
\left\|
\left\|
\la \mu \ra^{\alpha(p)}
\phi(D-\mu)
\left[
\sum_{\Nu}
Q_{\Nu, \K, \ell}(x)
F^1_{\nu_1, k_1}(x)
F^2_{\nu_2, k_2}(x)
\right]
\right\|_{\ell^2_{\mu}}
\right\|_{L^{p_0}_x}^{\widetilde{p}_0}.
\end{align*}
Since 
$\supp \F Q_{\Nu, \K, \ell}  \subset \ell + [-1,1]^n$ and 
$\supp \F F^j_{\nu_j, k_j}  \subset \nu_j + [-1,1]^n$, $j=1,2$,
and since the function $\phi$ (which is used in the definition of Wiener amalgam spaces) has compact support, we have
\begin{align*}
&\left\|
\left\|
\la \mu \ra^{\alpha(p)}
\phi(D-\mu)
\left[
\sum_{\Nu}
Q_{\Nu, \K, \ell}
F^1_{\nu_1, k_1}
F^2_{\nu_2, k_2}
\right]
\right\|_{\ell^2_{\mu}}
\right\|_{L^{p_0}}^{\widetilde{p}_0}
\\
&=
\left\|
\left\|
\sum_{\Nu: |\nu_1+\nu_2+\ell -\mu| \lesssim 1}
\la \mu \ra^{\alpha(p)}
\phi(D-\mu)
[
Q_{\Nu, \K, \ell}
F^1_{\nu_1, k_1}
F^2_{\nu_2, k_2}
]
\right\|_{\ell^2_{\mu}}
\right\|_{L^{p_0}}^{\widetilde{p}_0}
\\
&\lesssim
\sum_{\tau: |\tau| \lesssim 1}
\left\|
\left\|
\sum_{\Nu: \nu_1+\nu_2+\ell -\mu =\tau}
\la \mu \ra^{\alpha(p)}
\phi(D-\mu)
[
Q_{\Nu, \K, \ell}
F^1_{\nu_1, k_1}
F^2_{\nu_2, k_2}
]
\right\|_{\ell^2_{\mu}}
\right\|_{L^{p_0}}^{\widetilde{p}_0}
\\
&=
\sum_{\tau: |\tau| \lesssim 1}
\left\|
\left\|
\la \mu-\tau + \ell \ra^{\alpha(p)}
\phi(D-\mu+\tau-\ell)
\left[
\sum_{\Nu: \nu_1+\nu_2=\mu}
Q_{\Nu, \K, \ell}
F^1_{\nu_1, k_1}
F^2_{\nu_2, k_2}
\right]
\right\|_{\ell^2_{\mu}}
\right\|_{L^{p_0}}^{\widetilde{p}_0}
\\
&\lesssim
\la \ell \ra^{\alpha(p) \widetilde{p}_0}
\sum_{\tau: |\tau| \lesssim 1}
\left\|
\left\|
\la \mu \ra^{\alpha(p)}
\phi(D-\mu+\tau-\ell)
\left[
\sum_{\Nu: \nu_1+\nu_2=\mu}
Q_{\Nu, \K, \ell}
F^1_{\nu_1, k_1}
F^2_{\nu_2, k_2}
\right]
\right\|_{\ell^2_{\mu}}
\right\|_{L^{p_0}}^{\widetilde{p}_0},
\end{align*}
where we used the inequality
$\la \mu-\tau + \ell \ra^{\alpha(p)} \lesssim \la \mu \ra^{\alpha(p)} \la \tau \ra^{\alpha(p)} \la \ell \ra^{\alpha(p)}$ in the last inequality.
Taking the integer $L$ satisfying 
$(-L+\alpha(p))\widetilde{p}_0 < -n$, 
we obtain
\begin{align*}
&\|T_\sigma(f_1, f_2)\|_{W^{p_0, 2}_{\alpha(p)}}^{\widetilde{p}_0}
\\
&\lesssim
\sup_{\K}
\sum_{\ell \in \Z^n}
\la \ell \ra^{(-L + \alpha(p))\widetilde{p}_0}
\sum_{\tau: |\tau| \lesssim 1}
\left\|
\left\|
\la \mu \ra^{\alpha(p)}
\phi(D-\mu+\tau-\ell)
F_{\mu, \K, \ell}
\right\|_{\ell^2_{\mu}}
\right\|_{L^{p_0}}^{\widetilde{p}_0}
\\
&\lesssim
\sup_{\K, \ell}
\sum_{\tau: |\tau| \lesssim 1}
\left\|
\left\|
\la \mu \ra^{\alpha(p)}
\phi(D-\mu+\tau-\ell)
F_{\mu, \K, \ell}
\right\|_{\ell^2_{\mu}}
\right\|_{L^{p_0}}^{\widetilde{p}_0}
\\
&=
(\dag),
\end{align*}
where
\[
F_{\mu, \K, \ell}
=
\sum_{\Nu: \nu_1+\nu_2=\mu}
Q_{\Nu, \K, \ell}
F^1_{\nu_1, k_1}
F^2_{\nu_2, k_2}.
\]
Then we have $\supp \F F_{\mu, \K, \ell} \subset \{|\zeta-\mu-\ell| \lesssim 1\}$.
Since $\supp \phi$ is compact, we have
\[
\supp \F[\Phi(\cdot) F_{\mu, \K, \ell}(x-\cdot)]
\subset
\{
\zeta 
:
|\zeta-\mu-\ell|
\lesssim
1
\},
\]
where $\Phi = \F^{-1} \phi$.
Hence, by Nikol'skij inequality (see, e.g., \cite[Proposition 1.3.2]{Triebel-ToFS}), we have
\[
|\phi(D-\mu+\tau-\ell)F_{\mu, \K, \ell}(x)|
\le
\|\Phi(y)F_{\mu, \K, \ell}(x-y)\|_{L^1_y}
\lesssim
\|\Phi(y)F_{\mu, \K, \ell}(x-y)\|_{L^{\widetilde{p}_0}_y}.
\]
Thus,  Minkowski's inequality  gives
\begin{align*}
(\dag)
&\lesssim
\sup_{\K, \ell}
\left\|
\left\|
\la \mu \ra^{\alpha(p)}
\|\Phi(y)F_{\mu, \K, \ell}(x-y)
\|_{L^{\widetilde{p}_0}_y}
\right\|_{\ell^2_{\mu}}
\right\|_{L^{p_0}_x}
\\
&\le
\sup_{\K, \ell}
\left\|
\left\|
\left\|
\la \mu \ra^{\alpha(p)}
\Phi(y)F_{\mu, \K, \ell}(x-y)
\right\|_{\ell^2_{\mu}}
\right\|_{L^{p_0}_x}
\right\|_{L^{\widetilde{p}_0}_y}
\\
&\lesssim
\sup_{\K, \ell}
\left\|
\left\|
\la \mu \ra^{\alpha(p)}
F_{\mu, \K, \ell}
\right\|_{\ell^2_{\mu}}
\right\|_{L^{p_0}}.
\end{align*}
Therefore,  combining this with \eqref{Q-est}, we obtain
\begin{align*}
&\|T_\sigma(f_1, f_2)\|_{W^{p_0, 2}_{\alpha(p)}}
\\
&\lesssim
\sup_{\K}
\left\|
\left\|
\sum_{\nu_1, \nu_2 \in \Z^n : \nu_1+\nu_2=\mu}
\la \nu_1 + \nu_2 \ra^{m_1+\alpha(p)}
\la \nu_2 \ra^{m_2}
\left|
F^1_{\nu_1, k_1}
\right|
\left|
F^2_{\nu_2, k_2}
\right|
\right\|_{\ell^2_{\mu}}
\right\|_{L^{p_0}}
\\
&=
\sup_{\K}
\left\|
\left\|
\sum_{\nu_1, \nu_2 \in \Z^n : \nu_1+\nu_2=\mu}
\la \nu_1 + \nu_2 \ra^{m_1+\alpha(p)}
\la \nu_1 \ra^{-\beta(p_1)}
\la \nu_2 \ra^{m_2-\beta(p_2)}
\left|
\widetilde{F}^1_{\nu_1, k_1}
\right|
\left|
\widetilde{F}^2_{\nu_2, k_2}
\right|
\right\|_{\ell^2_{\mu}}
\right\|_{L^{p_0}}
\\
&=
(\ddag),
\end{align*}
where $\widetilde{F}^j_{\nu_j, k_j} = \la \nu_j \ra^{\beta(p_j)} F^j_{\nu_j, k_j}$, $j=1,2$.
We divide the sum into the following three parts:
\begin{align*}
\sum_{\nu_1, \nu_2 \in \Z^n : \nu_1+\nu_2=\mu}
(\cdots)
&=
\left(
\sum_{\substack{\nu_1, \nu_2 \in \Z^n: \nu_1+\nu_2=\mu \\ |\nu_1| > 2|\nu_2|}}
+
\sum_{\substack{\nu_1, \nu_2 \in \Z^n: \nu_1+\nu_2=\mu \\ |\nu_2|/2 \le |\nu_1| \le 2|\nu_2|}}
+
\sum_{\substack{\nu_1, \nu_2 \in \Z^n: \nu_1+\nu_2=\mu \\ 2|\nu_1| < |\nu_2|}}
\right)
(\cdots)
\\
&=:
S_1 + S_2 + S_3.
\end{align*}
Then
\[
(\ddag)
\lesssim
\sup_{\K}
\left\|
\left\|
S_1
\right\|_{\ell^2_{\mu}}
\right\|_{L^{p_0}}
+
\sup_{\K}
\left\|
\left\|
S_2
\right\|_{\ell^2_{\mu}}
\right\|_{L^{p_0}}
+
\sup_{\K}
\left\|
\left\|
S_3
\right\|_{\ell^2_{\mu}}
\right\|_{L^{p_0}}
=:
\widetilde{S}_1 + \widetilde{S}_2 + \widetilde{S}_3.
\]

\noindent
\textit{Estimate for $\widetilde{S}_1$}:
Since $\la \nu_1+\nu_2 \ra \approx \la \nu_1\ra$ if $|\nu_1| >2|\nu_2|$,
we have 
\begin{equation}\label{equiv1}
\widetilde{S}_1
\lesssim
\sup_{\K}
\left\|
\left\|
\sum_{\Nu: \nu_1+\nu_2=\mu}
\la \nu_1 \ra^{m_1+\alpha(p)-\beta(p_1)}
\la \nu_2 \ra^{m_2-\beta(p_2)}
\left|
\widetilde{F}^1_{\nu_1, k_1}
\right|
\left|
\widetilde{F}^2_{\nu_2, k_2}
\right|
\right\|_{\ell^2_{\mu}}
\right\|_{L^{p_0}}.
\end{equation}
Furthermore, our assumption 
\eqref{condition-m1m2}
and
\eqref{assumption-dual1} 
imply that
\begin{align*}
&
(m_1+\alpha(p)-\beta(p_1)) + (m_2-\beta(p_2))
=
-\frac{n}{2}
\\
&
-\frac{n}{2} < m_1+\alpha(p)-\beta(p_1) < 0,
\quad
-\frac{n}{2} < m_2 -\beta(p_2) < 0
\end{align*}
since 
\[
\min\left\{\frac{n}{p}, \frac{n}{2}\right\} = -\alpha(p) + \frac{n}{2},
\quad
\max\left\{\frac{n}{p_j}, \frac{n}{2}\right\} = -\beta(p_j) + \frac{n}{2},
\quad
j=1,2,
\]
and since $\beta(p_1) \le 0$, $\min\{n/p_1^{\prime}, n/2\} \le n/2$ and
\begin{align*}
\min\left\{\frac{n}{p_1^{\prime}}, \frac{n}{2}\right\} = \beta(p_1) + \frac{n}{2},
\quad
\max\left\{\frac{n}{p^{\prime}}, \frac{n}{2}\right\} = \alpha(p) + \frac{n}{2}.
\end{align*}
Hence, by the assertion (1) of Lemma \ref{lem-Bclass} and  H\"older's inequality, 
we obtain
\begin{align*}
\widetilde{S}_1
\lesssim
\sup_{\K}
\left\|
\left\|
\widetilde{F}^1_{\nu_1, k_1}
\right\|_{\ell^2_{\nu_1}}
\left\|
\widetilde{F}^2_{\nu_2, k_2}
\right\|_{\ell^2_{\nu_2}}
\right\|_{L^{p_0}}
\lesssim
\sup_{\K}
\left\|
\left\|
\widetilde{F}^1_{\nu_1, k_1}
\right\|_{\ell^2_{\nu_1}}
\right\|_{L^{p_1}}
\left\|
\left\|
\widetilde{F}^2_{\nu_2, k_2}
\right\|_{\ell^2_{\nu_2}}
\right\|_{L^{p_2}}
\end{align*}
Since the function $\widetilde{\varphi}$ satisfies \eqref{partition-Triebel}, 
we have the following equivalence:
\[
\left\|
\left\|
\widetilde{F}^j_{\nu_j, k_j}
\right\|_{\ell^2_{\nu_j}}
\right\|_{L^{p_j}}
=
\left\|
\left\|
\la \nu_j \ra^{\beta(p_j)}
\widetilde{\varphi}(D-\nu_j)f_j(x+k_j)
\right\|_{\ell^2_{\nu_j}}
\right\|_{L^{p_j}_x}
\approx
\|f_j\|_{W^{p_j, 2}_{\beta(p_j)}},
\quad
j=1,2.
\]
Thus, we obtain the desired estimate.

\bigskip
\noindent
\textit{Estimate for $\widetilde{S}_2$}:
Since $\la \nu_1 \ra \approx \la \nu_2\ra$, 
it follows that
\begin{equation*}
\widetilde{S}_2
\lesssim
\sup_{\K}
\left\|
\left\|
\sum_{\Nu: \nu_1+\nu_2=\mu}
\la \nu_1 + \nu_2 \ra^{m_1+\alpha(p)}
\la \nu_2 \ra^{m_2-\beta(p_2)-\beta(p_1)}
\left|
\widetilde{F}^1_{\nu_1, k_1}
\right|
\left|
\widetilde{F}^2_{\nu_2, k_2}
\right|
\right\|_{\ell^2_{\mu}}
\right\|_{L^{p_0}}.
\end{equation*}
Furthermore, we have 
$(m_1+\alpha(p)) + (m_2-\beta(p_2)-\beta(p_1))= -n/2$ and
\begin{align*}
-\frac{n}{2} < m_1+\alpha(p) < 0,
\quad
-\frac{n}{2} < m_2 -\beta(p_2)-\beta(p_1) < 0
\end{align*}
under the assumption \eqref{assumption-dual1}.
Thus, by Lemma \ref{lem-Bclass},  we obtain the desired estimate
in the same way as above.

\bigskip
\noindent
\textit{Estimate for $S_3$}:
For $\nu_1, \nu_2 \in \Z^n$ satisfying $2|\nu_1|< |\nu_2|$, we have $\la \nu_1+ \nu_2 \ra \approx \la \nu_2 \ra \gtrsim \la \nu_1\ra$. 
Hence $\widetilde{S}_3$ is estimated by the right hand side of \eqref{equiv1} since $m_1 + \alpha(p) < 0$,
and consequently we also obtain the desired estimate in the same way as above.

\bigskip
The proof of Theorem \ref{est-sep} is complete.
\end{proof}

In the same way as above, we can prove the following  proposition 
by using the assertion (2) of Lemma \ref{lem-Bclass}.

\begin{prop}\label{sc-prop}
Let $0 <p_1, p_2, p \le \infty$ and $1/p \le 1/p_1 + 1/p_2$.
We assume that $m_1$ and $m_2$ satisfy
\begin{align*}
m_1+m_2 
< 
\min
\left\{
\frac{n}{p}, 
\frac{n}{2}
\right\}
-
\max
\left\{
\frac{n}{p_1},
\frac{n}{2}
\right\}
-
\max
\left\{
\frac{n}{p_2},
\frac{n}{2}
\right\}.
\end{align*}

\begin{enumerate}
\item
If $m_1$ and $m_2$ satisfy
\begin{align*}
\qquad
m_1
\le
\min
\left\{
\frac{n}{p}, 
\frac{n}{2}
\right\}
-
\max
\left\{
\frac{n}{p_1},
\frac{n}{2}
\right\},
\quad
m_2
\le
\min
\left\{
\frac{n}{p}, 
\frac{n}{2}
\right\}
-
\max
\left\{
\frac{n}{p_2},
\frac{n}{2}
\right\},
\end{align*}
then 
$
\Op(BS^{(m_1, m_2)}_{0, 0})
\subset
B(h^{p_1} \times h^{p_2} \to h^p).
$

\item
If $m_1$ and $m_2$ satisfy 
\begin{align*}
\qquad
m_1
\le
\min
\left\{
\frac{n}{p_1^{\prime}}, 
\frac{n}{2}
\right\}
-
\max
\left\{
\frac{n}{p^{\prime}},
\frac{n}{2}
\right\},
\quad
m_2
\le
\min
\left\{
\frac{n}{p_1^{\prime}}, 
\frac{n}{2}
\right\}
-
\max
\left\{
\frac{n}{p_2},
\frac{n}{2}
\right\},
\end{align*}
then 
$
\Op(BS^{(m_1, m_2), *1}_{0, 0})
\subset
B(h^{p_1} \times h^{p_2} \to h^p).
$

\item
If $m_1$ and $m_2$ satisfy 
\begin{align*}
\qquad
m_1
\le
\min
\left\{
\frac{n}{p_2^{\prime}}, 
\frac{n}{2}
\right\}
-
\max
\left\{
\frac{n}{p_1},
\frac{n}{2}
\right\},
\quad
m_2
\le
\min
\left\{
\frac{n}{p_2^{\prime}}, 
\frac{n}{2}
\right\}
-
\max
\left\{
\frac{n}{p^{\prime}},
\frac{n}{2}
\right\},
\end{align*}
then 
$
\Op(BS^{(m_1, m_2), *2}_{0, 0})
\subset
B(h^{p_1} \times h^{p_2} \to h^p).
$
\end{enumerate}
Here $h^{p}$ (resp. $h^{p_1}$, $h^{p_2}$) should be replaced by $bmo$ 
if $p=\infty$ (resp. $p_1= \infty$, $p_2 = \infty$).
\end{prop}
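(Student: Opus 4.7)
The plan is to mimic the proof of Theorem \ref{est-sep} verbatim, with Lemma \ref{lem-Bclass}(1) replaced by Lemma \ref{lem-Bclass}(2) at the final step. First I reduce to Wiener amalgam spaces via Proposition \ref{prop-inclusion-Wieneramalgam}: putting $1/p_0=1/p_1+1/p_2$, it suffices to prove
\[
\Op(\cdots)\subset B\bigl(W^{p_1,2}_{\beta(p_1)}\times W^{p_2,2}_{\beta(p_2)}\to W^{p_0,2}_{\alpha(p)}\bigr).
\]
The endpoint cases $p=\infty$ or $p_j=\infty$ are absorbed uniformly by invoking Proposition \ref{prop-inclusion-Wieneramalgam}(3) together with the identities $\alpha(\infty)=n/2$ and $\beta(\infty)=0$; this is the payoff for using Wiener amalgam spaces throughout.

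Next, I apply the Fourier series expansion of $\sigma$ and the P\"aiv\"arinta--Somersalo partition (Lemma \ref{lem-decomp}) in exactly the same way as in the proof of Theorem \ref{est-sep}. For each of the three symbol classes, the integration-by-parts estimate yields $\|Q_{\Nu,\K,\ell}\|_{L^\infty}\lesssim V(\nu_1,\nu_2)$, where $V$ is respectively $\la\nu_1\ra^{m_1}\la\nu_2\ra^{m_2}$, $\la\nu_1+\nu_2\ra^{m_1}\la\nu_2\ra^{m_2}$, or $\la\nu_1\ra^{m_1}\la\nu_1+\nu_2\ra^{m_2}$. Reproducing the Nikol'skij/Minkowski step reduces the problem to estimating
\[
\sup_{\K}\Bigl\|\Bigl\|\sum_{\nu_1+\nu_2=\mu}W(\nu_1,\nu_2)\,|\widetilde F^1_{\nu_1,k_1}|\,|\widetilde F^2_{\nu_2,k_2}|\Bigr\|_{\ell^2_\mu}\Bigr\|_{L^{p_0}},
\]
where $W(\nu_1,\nu_2)=V(\nu_1,\nu_2)\la\nu_1+\nu_2\ra^{\alpha(p)}\la\nu_1\ra^{-\beta(p_1)}\la\nu_2\ra^{-\beta(p_2)}$, by $\|f_1\|_{W^{p_1,2}_{\beta(p_1)}}\|f_2\|_{W^{p_2,2}_{\beta(p_2)}}$.

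The core computation is the case split into the three regions $|\nu_1|>2|\nu_2|$, $|\nu_2|/2\le|\nu_1|\le 2|\nu_2|$, $2|\nu_1|<|\nu_2|$. In each region the local equivalences $\la\nu_1+\nu_2\ra\approx\la\nu_j\ra$ or $\la\nu_1\ra\approx\la\nu_2\ra$ let one merge factors of $W$ into a product of just two factors of the form allowed by Lemma \ref{lem-Bclass}. A direct arithmetic check, using the relations $\alpha(p)=n/2-\min\{n/p,n/2\}$ and $\beta(p_j)=n/2-\max\{n/p_j,n/2\}$, shows that the individual hypotheses on $m_1,m_2$ in each part of the proposition are exactly what is needed to make the two resulting exponents $a_1,a_2$ be $\le 0$, while the strict sum hypothesis translates precisely into $a_1+a_2<-n/2$. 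Lemma \ref{lem-Bclass}(2) then gives the $\ell^2\times\ell^2\to\ell^2$ bound for each $\mu$-sum, and H\"older's inequality in $L^{p_0}=L^{p_1}\cdot L^{p_2}$ closes the estimate exactly as in the proof of Theorem \ref{est-sep}.

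The subtlety I anticipate is the middle region $\la\nu_1\ra\approx\la\nu_2\ra$, in which $\la\nu_1+\nu_2\ra$ can be much smaller than $\la\nu_j\ra$. Whenever the exponent on $\la\nu_1+\nu_2\ra$ in $W$ is nonnegative (as happens for $\la\nu_1+\nu_2\ra^{\alpha(p)}$ in part (1), and in the symmetric situation for $\la\nu_1\ra^{-\beta(p_1)}$ in part (2)), this factor cannot be distributed onto $\la\nu_j\ra$ in the same direction as in the proof of Theorem \ref{est-sep}. In those subcases I instead apply the one-sided bound $\la\nu_1+\nu_2\ra^{\alpha(p)}\lesssim\la\nu_j\ra^{\alpha(p)}$ (valid because the exponent is $\ge 0$), which collapses the entire weight to a single power $\la\nu_j\ra^a$; the \emph{strict} sum hypothesis guarantees $a<-n/2$, so Lemma \ref{lem-Bclass}(2) applies with the first exponent taken to be $0$. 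The assertions (2) and (3) are handled by the analogous symmetric splitting, completing the proof.
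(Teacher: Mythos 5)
Your proposal is correct and is exactly the route the paper intends: the paper's own ``proof'' of this proposition is a single sentence saying it follows in the same way as Theorem \ref{est-sep}, with assertion (1) of Lemma \ref{lem-Bclass} replaced by assertion (2), and your write-up fleshes out precisely that argument (Wiener-amalgam reduction, Fourier series expansion, the three-region splitting, and the arithmetic showing the non-strict individual hypotheses give exponents $\le 0$ while the strict sum hypothesis gives sum $<-n/2$). Your one-sided domination trick for factors with nonnegative exponent (collapsing the weight onto a single power, then invoking Lemma \ref{lem-Bclass}(2) with one exponent equal to $0$) is indeed the extra step needed in the off-diagonal regions of parts (2) and (3) as well as the middle region you flag, and it works because in each such region the variable being discarded satisfies $\la\cdot\ra\lesssim\la\cdot\ra$ in the required direction.
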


\section{Proof of  Theorem \ref{main-thm}}
\label{sec-mainthm}

In this section, we shall give the proof of Theorem \ref{main-thm}.
We set
\[
m_c(p_1, p_2, p) 
=
\min\{n/p, n/2\} - \max\{n/p_1, n/2\} - \max\{n/p_2, n/2\}.
\]
We restate our main result here. 
\begin{thm}\label{main-thm+}
Let $0< p_1, p_2, p \le \infty$, $1/p \le 1/p_1+1/p_2$, $s_1, s_2, s \in \R$.
\begin{enumerate}
\item
We assume that
\begin{equation}\label{m-critical*}
m 
= 
m_c(p_1, p_2, p) 
+s_1
+s_2
-s
\end{equation}
and
\begin{align}\label{m-critical?*}
s_1 < \max\left\{ \frac{n}{p_1}, \frac{n}{2} \right\},
\quad
s_2 < \max\left\{ \frac{n}{p_2}, \frac{n}{2} \right\},
\quad
s > -\max\left\{ \frac{n}{p^{\prime}}, \frac{n}{2} \right\}
\end{align}
Then 
\begin{equation}\label{main-est*}
\Op(BS^m_{0,0})
\subset
B(h^{p_1}_{s_1} \times h^{p_2}_{s_2} \to h^p_s).
\end{equation}
\item
We assume that
\begin{equation}\label{m-subcritical}
m 
< 
m_c(p_1, p_2, p)
+s_1
+s_2
-s
\end{equation}
and
\begin{equation}\label{m-subcritical?}
s_1
\le
\max
\left\{
\frac{n}{p_1},
\frac{n}{2}
\right\}
+
\kappa,
\quad
s_2
\le
\max
\left\{
\frac{n}{p_2},
\frac{n}{2}
\right\}
+\kappa,
\quad
s
\ge
-
\max
\left\{
\frac{n}{p^{\prime}},
\frac{n}{2}
\right\}
-
\kappa
\end{equation}
with $\kappa = m_c(p_1, p_2, p) +s_1+s_2-s -m > 0$.
Then the boundedness
\eqref{main-est*}
holds. 
\end{enumerate}
Here $h^{p}_{s}$ (resp. $h^{p_1}_{s_1}$, $h^{p_2}_{s_2}$) should be replaced by $bmo_{s}$ (resp. $bmo_{s_1}$, $bmo_{s_2}$) 
if $p=\infty$ (resp. $p_1=\infty$, $p_2 = \infty$).
\end{thm}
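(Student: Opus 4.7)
\bigskip
\noindent\textit{Proof proposal.} The plan is to reduce the theorem to the $S_{0,0}$-type boundedness results of Section~\ref{secS00type} (Theorem~\ref{est-sep} for part~(1), Proposition~\ref{sc-prop} for part~(2)) via a bilinear symbolic calculus and a frequency decomposition of the reduced symbol adapted to the relative sizes of $|\xi_1|$, $|\xi_2|$, and $|\xi_1+\xi_2|$. The endpoint cases that lie outside the direct reach of those theorems are handled by the complex interpolation argument detailed in the Appendix.

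First, by the bilinear analogue of the linear symbolic calculus sketched in the introduction, the composition
\[
(I-\Delta)^{s/2}\,T_{\sigma}\bigl((I-\Delta)^{-s_1/2}\cdot,\,(I-\Delta)^{-s_2/2}\cdot\bigr)
\]
equals $T_{\tau}$ for a smooth symbol $\tau=\tau(x,\xi_1,\xi_2)$ that satisfies
\[
|\pa_x^{\alpha}\pa_{\xi_1}^{\beta_1}\pa_{\xi_2}^{\beta_2}\tau(x,\xi_1,\xi_2)|
\lesssim
\la\xi_1+\xi_2\ra^{s}\la\xi_1\ra^{-s_1}\la\xi_2\ra^{-s_2}\la(\xi_1,\xi_2)\ra^{m}
\]
for all multi-indices (the general form of the pointwise bound \eqref{decaynew*}). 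Because the Bessel potentials are isomorphisms between the corresponding Hardy--Sobolev and $bmo$-Sobolev scales, it is enough to show that every such $\tau$ yields an operator $T_{\tau}$ bounded from $h^{p_1}\times h^{p_2}$ to $h^{p}$ (with the $bmo$ replacements in the endpoint cases).

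Next I would introduce a zero-homogeneous smooth partition of unity of $(\R^n)^2\setminus\{0\}$ and write $\tau=\tau_1+\tau_2+\tau_3$ (plus a compactly supported low-frequency remainder, which is harmless) with $\tau_1,\tau_2,\tau_3$ localized respectively to the regions $R_1=\{|\xi_2|\le|\xi_1|/2\}$, $R_2=\{|\xi_1|\le|\xi_2|/2\}$, and $R_3=\{|\xi_1|/2\le|\xi_2|\le 2|\xi_1|\}$. On $R_1$ one has $\la\xi_1+\xi_2\ra\approx\la\xi_1\ra\approx\la(\xi_1,\xi_2)\ra\gtrsim\la\xi_2\ra$, symmetrically on $R_2$, while on $R_3$ one has $\la\xi_1\ra\approx\la\xi_2\ra\approx\la(\xi_1,\xi_2)\ra$ with $\la\xi_1+\xi_2\ra$ unrestricted. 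Using these equivalences, together with the freedom (valid on $R_1$ and $R_2$) to redistribute a nonpositive power of $\la\xi_1\ra$ to $\la\xi_2\ra$ or vice versa, each piece can be placed in one of the classes $BS^{(m_1,m_2)}_{0,0}$, $BS^{(m_1,m_2),*1}_{0,0}$, $BS^{(m_1,m_2),*2}_{0,0}$ with a pair $(m_1,m_2)$ summing to $m_c(p_1,p_2,p)$, thanks to the critical identity \eqref{m-critical*}. Theorem~\ref{est-sep}~(1) applied to $\tau_1,\tau_2$ and Theorem~\ref{est-sep}~(2) or~(3) applied to $\tau_3$ then produce the desired boundedness for part~(1). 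The bookkeeping that the hypothesis \eqref{m-critical?*} is exactly what is needed to place an admissible $(m_1,m_2)$ inside the open rectangles of \eqref{condition-mj}, \eqref{assumption-dual1}, \eqref{assumption-dual2} is direct: the upper bounds on $s_1,s_2$ in \eqref{m-critical?*} allow the $\tau_1,\tau_2$ distributions, while the lower bound on $s$ controls the $\tau_3$ piece through Theorem~\ref{est-sep}~(2) (or~(3)). Part~(2) follows from the same decomposition with Proposition~\ref{sc-prop} in place of Theorem~\ref{est-sep}: the slack $\kappa>0$ absorbs the gap between the strict inequalities needed in part~(1) and the non-strict ones of \eqref{m-subcritical?}, since Proposition~\ref{sc-prop} tolerates equality.

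The main obstacle I anticipate is the endpoint handling. Theorem~\ref{est-sep}~(1) excludes $p=\infty$, while (2) and (3) require $p_1>1$ and $p_2>1$ respectively, so the boundary cases (including $p=\infty$, $p_1=1$, $p_2=1$, and their combinations) cannot be reached by the decomposition alone; this is precisely the role of the complex interpolation argument deferred to the Appendix, which connects admissible interior tuples $(p_1,p_2,p,s_1,s_2,s)$ along curves on which \eqref{m-critical*} (respectively \eqref{m-subcritical}) is preserved. A secondary but structural point is that on $R_3$ the symbol cannot be placed in the unstarred class $BS^{(m_1,m_2)}_{0,0}$, because $\la\xi_1+\xi_2\ra$ can be arbitrarily small compared to $\la\xi_1\ra\approx\la\xi_2\ra$; this is exactly the reason the starred classes were introduced in Section~\ref{secS00type}, and without Theorem~\ref{est-sep}~(2)--(3) the $R_3$ contribution would not be controlled.
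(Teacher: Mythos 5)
Your overall route is the one the paper itself follows: reduce via the composition \eqref{symbol***} to a symbol satisfying \eqref{symbol-newdecay} (the class $BS^{m,(s_1,s_2,s)}_{0,0}$ of Section \ref{sec-mainthm}), split it by a homogeneous conical partition into a diagonal piece ($|\xi_1|\approx|\xi_2|$, with $|\xi_1+\xi_2|$ possibly small) handled through the starred classes by Theorem \ref{est-sep} (2)--(3) (resp.\ Proposition \ref{sc-prop} for part (2)), and two off-diagonal pieces handled by Theorem \ref{est-sep} (1), then interpolate to cover the corner $p_1,p_2\le 1$. Your bookkeeping of which hypothesis in \eqref{m-critical?*} feeds which piece, and the remark that $\kappa>0$ lets Proposition \ref{sc-prop} absorb the non-strict inequalities in part (2), are consistent with the paper's Proposition \ref{mainprop}.

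There is, however, a genuine gap in your treatment of $p=\infty$. You delegate it, together with $p_1=p_2\le 1$, to the complex interpolation of the Appendix, but interpolation only produces intermediate exponents: to realize $1/\widetilde{p}=0$ as $(1-\theta)/p^{(0)}+\theta/p^{(1)}$ with $0<\theta<1$ you would already need both endpoint estimates at $p=\infty$, which the decomposition does not supply (Theorem \ref{est-sep} (1), and hence Proposition \ref{mainprop} (1), requires $p<\infty$). The paper closes this case by duality, not interpolation: since $BS^m_{0,0}$ is invariant under the transposes $\sigma\mapsto\sigma^{*j}$ and $bmo_s=(h^1_{-s})^{*}$, the already proved bound $h^1_{-s}\times h^{p_2}_{s_2}\to L^{p_1^{\prime}}_{-s_1}$ (a case with finite target exponent) is equivalent to $L^{p_1}_{s_1}\times h^{p_2}_{s_2}\to bmo_s$, and one checks $m_c(1,p_2,p_1^{\prime})-s+s_2+s_1=m_c(p_1,p_2,\infty)+s_1+s_2-s$; only after this duality step does the paper interpolate in $(1/p_1,1/p_2)$ to reach $p_1,p_2\le 1$ with $p=\infty$. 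Without such an argument your plan leaves the $bmo_s$ target uncovered. A secondary, technical omission: to justify the integration by parts behind \eqref{symbol-newdecay}, the paper first reduces, via the closed graph theorem and Fatou's lemma, to symbols with compact support in $x$ (Proposition \ref{tau-est} is stated only for those); you should include this approximation step or otherwise justify the oscillatory-integral manipulations for general $\sigma\in BS^m_{0,0}$.
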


We first check that
it is sufficient to prove Theorem \ref{main-thm+} for the case that 
the symbol $\sigma \in BS^m_{0,0}$ has compact support in the variable $x$.
If the boundedness \eqref{main-est*} holds, then 
it follows from the closed graph theorem that there exists a positive integer $N$ such that
\begin{equation}\label{closedgraph*}
\|T_{\sigma}\|_{h^{p_1}_{s_1} \times h^{p_2}_{s_2} \to h^{p}_s}
\lesssim
\max_{|\alpha|, |\beta_1|, |\beta_2| \le N}
\left\|
(1+|\xi_1|+|\xi_2|)^{-m}
\pa_x^{\alpha}
\pa_{\xi_1}^{\beta_1}
\pa_{\xi_2}^{\beta_2}
\sigma(x, \xi_1, \xi_2)
\right\|_{L^{\infty}_{x,\xi_1,\xi_2}}
\end{equation}
for all $\sigma \in BS^m_{0,0}$.

For $\sigma \in BS^m_{0,0}$ and $0 < \epsilon < 1$,  
we define $\sigma_{\epsilon}(x, \xi_1, \xi_2) = \sigma(x, \xi_1, \xi_2)\vphi(\epsilon x)$,
where $\vphi \in \Sh(\R^n)$ satisfies $\supp \vphi \subset \{|x| \le 1\}$ and $\vphi(0) = 1$.
Then, it can be checked that
\[
\lim_{\epsilon \to 0} T_{\sigma_{\epsilon}}(f_1, f_2) = T_{\sigma}(f_1, f_2),
\quad
f_1, f_2 \in \Sh(\R^n)
\]
and that $\sigma_{\epsilon}$ satisfies
\[
|
\pa_{x}^{\alpha}\pa_{\xi_1}^{\beta_1}\pa_{\xi_2}^{\beta_2}
\sigma_{\epsilon}(x, \xi_1, \xi_2)
|
\lesssim
(1+|\xi_1|+|\xi_2|)^m.
\]
where the implicit constant is independent of $\epsilon$.
Combining this fact with \eqref{closedgraph*}, 
we have
$\|T_{\sigma_{\epsilon}}\|_{h^{p_1}_{s_1} \times h^{p_2}_{s_2} \to h^{p}_{s}} \lesssim 1$ 
uniformly in $\epsilon$. 
Thus Fatou's lemma yields that
\begin{align*}
\|T_{\sigma}(f_1, f_2)\|_{h^{p}_{s}}
&\le
\liminf_{\epsilon \to 0}
\|T_{\sigma_{\epsilon}}(f_1, f_2)\|_{h^{p}_{s}}
\lesssim
\|f_1\|_{h^{p_1}_{s_1}}\|f_2\|_{h^{p_2}_{s_2}},
\end{align*}
where the last $\lesssim$ does not depend on $\epsilon$. 
Thus we also obtain the desired result for $\sigma \in BS^m_{0,0}$ that do not have compact support. 
For details on such an approximate argument, see \cite{BT-1} and \cite{BMNT}.

\begin{prop}\label{tau-est}
Let $\sigma$ be in $BS^m_{0,0}$ and 
have compact support in the variable $x$, 
and let $s_1, s_2, s \in \R$.
Then, the corresponding bilinear symbol $\tau = \tau(x, \xi_1, \xi_2)$ given by \eqref{symbol***}
satisfies
\begin{equation}\label{symbol-newdecay}
|\pa_{x}^{\alpha} \pa_{\xi_1}^{\beta_1} \pa_{\xi_2}^{\beta_2} \tau(x, \xi_1, \xi_2)|
\lesssim
(1+ |\xi_1+\xi_2|)^{s}
(1+ |\xi_1| )^{-s_1}
(1+ |\xi_2| )^{-s_2}
(1+ |\xi_1| + |\xi_2|)^{m}
\end{equation}
for all multi-indices $\alpha, \beta_1, \beta_2 \in \N^n_0$.
\end{prop}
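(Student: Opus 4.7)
The plan is to derive an explicit oscillatory-integral formula for $\tau$ and then estimate every term in that formula, exploiting the compact $x$-support of $\sigma$ to convert $x$-smoothness into rapid decay in the dual variable.

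First, starting from $T_\tau(f_1, f_2) = (I-\Delta)^{s/2}T_\sigma((I-\Delta)^{-s_1/2}f_1, (I-\Delta)^{-s_2/2}f_2)$, I would write $(I-\Delta)^{s/2}$ via Fourier inversion in an auxiliary variable $\eta$, insert the defining integral for $T_\sigma$, and then carry out the $y$-integration, producing the partial Fourier transform
\[
\widehat{\sigma}^{(1)}(\zeta, \xi_1, \xi_2) = \int_{\R^n} e^{-iy\cdot\zeta}\,\sigma(y, \xi_1, \xi_2)\, dy.
\]
After the change of variable $\eta = \xi_1+\xi_2+\zeta$ and comparing with the defining formula of $T_\tau$, this should yield
\[
\tau(x, \xi_1, \xi_2)
=
\la\xi_1\ra^{-s_1}\la\xi_2\ra^{-s_2}
\cdot
\frac{1}{(2\pi)^n}\int_{\R^n}
e^{ix\cdot\zeta}\,\la\xi_1+\xi_2+\zeta\ra^{s}\,\widehat{\sigma}^{(1)}(\zeta, \xi_1, \xi_2)\, d\zeta.
\]

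Second, because $\sigma(\cdot, \xi_1, \xi_2)$ has compact support in $x$ (independent of $\xi_1, \xi_2$), integration by parts in $y$ will give, for any $N \in \N$ and any multi-indices $\beta_1, \beta_2$,
\[
|\pa_{\xi_1}^{\beta_1}\pa_{\xi_2}^{\beta_2}\widehat{\sigma}^{(1)}(\zeta, \xi_1, \xi_2)|
\lesssim_{N, \beta_1, \beta_2}
\la\zeta\ra^{-N}(1+|\xi_1|+|\xi_2|)^{m},
\]
since $\pa_{\xi_1}^{\beta_1}\pa_{\xi_2}^{\beta_2}\sigma$ inherits both the $BS^{m}_{0,0}$ estimate and the same compact $x$-support. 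Together with Peetre's inequality $\la\xi_1+\xi_2+\zeta\ra^{s-k}\lesssim \la\xi_1+\xi_2\ra^{s}\la\zeta\ra^{|s|+k}$ (valid for any fixed $k\ge 0$, since $\la\xi_1+\xi_2\ra^{-k}\le 1$) and the elementary bound $|\pa_{\xi_j}^{\beta}\la\xi_j\ra^{-s_j}|\lesssim \la\xi_j\ra^{-s_j}$, this controls every factor produced when $\pa_{x}^{\alpha}\pa_{\xi_1}^{\beta_1}\pa_{\xi_2}^{\beta_2}$ is distributed over the three factors of $\tau$ by the Leibniz rule.

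Finally, $\pa_{x}^{\alpha}$ brings a factor $(i\zeta)^{\alpha}$ from differentiating the exponential; its polynomial growth in $\zeta$ is absorbed by the rapid decay of $\widehat{\sigma}^{(1)}$ once $N$ is chosen large enough depending on $|\alpha|, |\beta_1|, |\beta_2|$ and $s$. Collecting the bounds and integrating in $\zeta$ then yields the claimed estimate \eqref{symbol-newdecay}. I expect the main obstacle to be the bookkeeping when $\pa_{\xi_1}^{\beta_1}\pa_{\xi_2}^{\beta_2}$ falls on $\la\xi_1+\xi_2+\zeta\ra^{s}$: one must apply Peetre's inequality to $\la\xi_1+\xi_2+\zeta\ra^{s-k}$ rather than to $\la\xi_1+\xi_2+\zeta\ra^{s}$, and the extra polynomial-in-$\zeta$ loss must be absorbed into the rapid decay of $\widehat{\sigma}^{(1)}$. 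This is precisely the mechanism that makes the compact $x$-support hypothesis on $\sigma$ essential.
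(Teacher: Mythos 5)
Your proposal is correct and follows essentially the same route as the paper: the paper writes the same representation of $\tau$ as a double oscillatory integral in $(z,\zeta)$ and integrates by parts in both variables, which is just your formula before the $y$-integration is carried out, and the key mechanism is identical in both arguments (compact $x$-support of $\sigma$ giving rapid decay in the dual variable $\zeta$, Peetre's inequality to extract $\la \xi_1+\xi_2\ra^{s}$, and the sup bounds on $\pa^{\alpha}_x\pa^{\beta_1}_{\xi_1}\pa^{\beta_2}_{\xi_2}\sigma$ supplying the factor $(1+|\xi_1|+|\xi_2|)^{m}$). The only difference is bookkeeping: you absorb the $y$-integral into the partial Fourier transform $\widehat{\sigma}^{(1)}$ and estimate a single $\zeta$-integral, while the paper keeps both integrals and gains $\la z\ra^{-2M}$ decay by a second integration by parts in $\zeta$.
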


\begin{proof}
By straightforward calculations, the symbol $\tau$
can be written as 
\[
\tau(x, \xi_1, \xi_2)
=
\left(
\frac{1}{(2\pi)^{n}}
\int_{(\R^n)^2}
e^{-i z \cdot \zeta}
\la \zeta + \xi_1 + \xi_2 \ra^s
\sigma(x+z, \xi_1, \xi_2)
dz
d\zeta
\right)
\la \xi_1 \ra^{-s_1}
\la \xi_2 \ra^{-s_2}.
\]
Let $M, N \in \N_0$ be such that $2M > n$, $2N >n + |s|$. 
Since
\[
e^{-iz \cdot \zeta} 
= 
(I-\Delta_{z})^{N} e^{-iz \cdot \zeta} \la \zeta \ra^{-N} 
= 
(I-\Delta_{\zeta})^{M} e^{-iz \cdot \zeta} \la z \ra^{-M},
\]
integration by parts gives that
\begin{align*}
&\int_{(\R^n)^2}
e^{-i z \cdot \zeta}
\la \zeta + \xi_1 + \xi_2 \ra^s
\sigma(x+z, \xi_1, \xi_2)
dz
d\zeta
\\
&=
\int_{(\R^n)^2}
[(I-\Delta_{z})^{N} e^{-i z \cdot \zeta}]
\la \zeta \ra^{-2N}
\la \zeta + \xi_1 + \xi_2 \ra^s
\sigma(x+z, \xi_1, \xi_2)
dz
d\zeta
\\
&=
\int_{(\R^n)^2}
e^{-i z \cdot \zeta}
\la \zeta \ra^{-2N}
\la \zeta + \xi_1 + \xi_2 \ra^s
(I-\Delta_{z})^{N}
\sigma(x+z, \xi_1, \xi_2)
dz
d\zeta
\\
&=
\int_{(\R^n)^2}
[(I-\Delta_{\zeta})^{M}e^{-i z \cdot \zeta}]
\la z \ra^{-2M}
 \la \zeta \ra^{-2N}
\la \zeta + \xi_1 + \xi_2 \ra^s
(I-\Delta_{z})^{N}
\sigma(x+z, \xi_1, \xi_2)
dz
d\zeta
\\
&=
\int_{(\R^n)^2}
e^{-i z \cdot \zeta}
\la z \ra^{-2M}
(I-\Delta_{\zeta})^{M}
[
 \la \zeta \ra^{-2N}
\la \zeta + \xi_1 + \xi_2 \ra^s
]
[
(I-\Delta_{z})^{N}
\sigma(x+z, \xi_1, \xi_2)
]
dz
d\zeta.
\end{align*}
Here, in the second equality, we used the assumption that the symbol $\sigma$ has compact support in the variable $x$.
Hence, we obtain
\begin{align*}
&|
\pa_{x}^{\alpha} \pa_{\xi_1}^{\beta_1} \pa_{\xi_2}^{\beta_2}
\tau(x, \xi_1, \xi_2)
|
\\
&\lesssim
\left(
\int_{(\R^n)^2}
\la \zeta \ra^{-2N+|s|}
\la z \ra^{-2M}
dzd\zeta
\right)
\la \xi_1+\xi_2 \ra^{s}
\la \xi_1 \ra^{-s_1}
\la \xi_2 \ra^{-s_2}
\la (\xi_1, \xi_2) \ra^m
\\
&\lesssim
\la \xi_1+\xi_2 \ra^{s}
\la \xi_1 \ra^{-s_1}
\la \xi_2 \ra^{-s_2}
\la (\xi_1, \xi_2) \ra^m.
\end{align*}
and this implies \eqref{symbol-newdecay}.
The proof of Proposition \ref{tau-est} is complete.
\end{proof}

We define the symbol class $BS^{m, (s_1, s_2, s)}_{0,0}$ by the set of $\sigma= \sigma(x, \xi_1, \xi_2) \in C^\infty(\R^n)$ satisfying
\eqref{symbol-newdecay}.
We have the following boundedness properties of bilinear pseudo-differential operators with symbols in these classes.

\begin{prop}\label{mainprop}
Let  $0< p_1, p_2, p \le \infty$, $1/p \le 1/p_1 + 1/p_2$,
and let $s, s_1, s_2 \in \R$. 
\begin{enumerate}
\item
We assume that the number $m$ satisfies 
\eqref{m-critical*} and \eqref{m-critical?*}.
Then, the boundedness
\begin{equation}\label{bdd-newdecay}
\Op(BS^{m, (s_1, s_2, s)}_{0,0})
\subset
B(h^{p_1} \times h^{p_2} \to h^p)
\end{equation}
holds
if $p_1, p_2$ and $p$ satisfy either of the following conditions in addition:
\begin{enumerate}
\renewcommand{\labelenumii}{(1-\roman{enumii})}
\item
$1< p_1 \le \infty$ and $0< p < \infty$.
\item
$1< p_2 \le \infty$ and $0< p < \infty$.
\end{enumerate}

\item
We assume that $m \in \R$ satisfy 
\eqref{m-subcritical} and \eqref{m-subcritical?}.
Then, 
the boundedness \eqref{bdd-newdecay} holds.
\end{enumerate}
Here $h^{p}$ (resp. $h^{p_1}$, $h^{p_2}$) should be replaced by $bmo$ if $p= \infty$ (resp. $p_1 = \infty$, $p_2= \infty$).
\end{prop}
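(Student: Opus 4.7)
The plan is to decompose $\tau\in BS^{m,(s_1,s_2,s)}_{0,0}$ into three frequency pieces according to the relative sizes of $|\xi_1|$ and $|\xi_2|$, recognize each piece as a member of one of the $S_{0,0}$-type classes of Section~\ref{secS00type}, and then apply Theorem~\ref{est-sep} in part~(1) or Proposition~\ref{sc-prop} in part~(2) to each piece.

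Concretely, I would fix a smooth partition of unity $1=\chi_0+\chi^{\mathrm{I}}+\chi^{\mathrm{II}}+\chi^{\mathrm{III}}$ on $(\R^n)^2$ with $\chi_0$ compactly supported near the origin and the other three homogeneous of degree $0$ at infinity, supported in the cones $\{|\xi_1|\le|\xi_2|/2\}$, $\{|\xi_2|\le|\xi_1|/2\}$, and $\{|\xi_1|/2<|\xi_2|<2|\xi_1|\}$ respectively. The piece $\tau\chi_0$ gives a trivially bounded operator. On the support of $\chi^{\mathrm{I}}$ the relations $\langle\xi_1+\xi_2\rangle\approx\langle\xi_2\rangle\approx\langle(\xi_1,\xi_2)\rangle$ reduce \eqref{symbol-newdecay} to $|\tau\chi^{\mathrm{I}}|\lesssim\langle\xi_1\rangle^{-s_1}\langle\xi_2\rangle^{m+s-s_2}$, so $\tau\chi^{\mathrm{I}}\in BS^{(-s_1,\,m+s-s_2)}_{0,0}$; symmetrically $\tau\chi^{\mathrm{II}}\in BS^{(m+s-s_1,\,-s_2)}_{0,0}$; and on the support of $\chi^{\mathrm{III}}$, where $\langle\xi_1\rangle\approx\langle\xi_2\rangle$, $\tau\chi^{\mathrm{III}}$ belongs both to $BS^{(s,\,m-s_1-s_2),*1}_{0,0}$ and to $BS^{(m-s_1-s_2,\,s),*2}_{0,0}$. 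In every case the sum $m_1+m_2$ equals $m_c(p_1,p_2,p)$ under \eqref{m-critical*} and is strictly smaller under \eqref{m-subcritical}, matching the sum hypothesis of the target result. I apply assertion~(1) for regions~I, II (valid since $0<p<\infty$), assertion~(2) for region~III in case~(1-i) (valid since $1<p_1\le\infty$), and assertion~(3) for region~III in case~(1-ii).

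The main obstacle is that the individual inequalities on $m_1,m_2$ in Theorem~\ref{est-sep} are two-sided, while the hypotheses \eqref{m-critical?*} control $s_1,s_2,s$ on only one side; a straightforward identification of $(m_1,m_2)$ will generically violate one of the inner bounds. I resolve this by a \emph{transfer at the symbol level}: on the support of $\chi^{\mathrm{I}}$ the inequality $\langle\xi_1\rangle\lesssim\langle\xi_2\rangle$ gives $\langle\xi_1\rangle^{-s_1}\lesssim\langle\xi_1\rangle^{-s_1-r}\langle\xi_2\rangle^{r}$ for every $r\ge 0$, so $\tau\chi^{\mathrm{I}}$ in fact belongs to $BS^{(-s_1-r,\,m+s-s_2+r)}_{0,0}$ for every $r\ge 0$; analogously in region~II, and in region~III using $\langle\xi_1+\xi_2\rangle\lesssim\langle\xi_2\rangle$ one may replace $(s,\,m-s_1-s_2)$ by $(s-r,\,m-s_1-s_2+r)$ in the $*1$-class (symmetrically for $*2$). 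Using $m_1+m_2=m_c$ and the identity $\max\{n/p_j,n/2\}+\min\{n/p_j',n/2\}=n$, the two-sided box of Theorem~\ref{est-sep} collapses in each region to a single open window on $r$, which is nonempty precisely under the strict inequalities of \eqref{m-critical?*}; this completes part~(1). In part~(2) the same transfer together with the $\kappa$-slack of $m_1+m_2<m_c$ places the exponents in the closed box of Proposition~\ref{sc-prop}, so the $\le$-hypotheses \eqref{m-subcritical?} suffice.
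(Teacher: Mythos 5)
Your proposal is correct and is essentially the paper's own argument: the paper also splits off a compactly supported low-frequency piece, decomposes the remainder into the three frequency cones, and observes that the pieces lie in $BS^{(-s_1-t,\,s-s_2+m+t)}_{0,0}$, $BS^{(s-s_1+m+t,\,-s_2-t)}_{0,0}$ and $BS^{(s-t,\,m-s_1-s_2+t),*1}_{0,0}$ (or $*2$) with a free transfer parameter $t\ge 0$, which is exactly your $r$. It then chooses $t$ in the resulting open windows (nonempty precisely under \eqref{m-critical?*}, with $p<\infty$ and $p_1>1$ or $p_2>1$ entering where you say) to apply Theorem \ref{est-sep}, and in the corresponding closed windows to apply Proposition \ref{sc-prop} for part (2), just as you propose.
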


\begin{proof}

Let $\Phi \in \Sh((\R^n)^2)$ be such that
\begin{align*}
&\supp \Phi 
\subset 
\left\{
(\xi_1, \xi_2) \in (\R^n)^2 : \sqrt{|\xi_1|^2 + |\xi_2|^2} \le 2
\right\}
\\
&\Phi = 1
\quad
\text{on}
\quad
\left\{
(\xi_1, \xi_2) \in (\R^n)^2 : \sqrt{|\xi_1|^2 + |\xi_2|^2} \le 1
\right\}.
\end{align*}
We divide $\sigma$ as
\[
\sigma = \sigma\Phi + \sigma(1-\Phi).
\]
Then, since $\sigma\Phi$ is smooth and rapidly decreasing, 
it follows from the assertion (1) of Theorem \ref{est-sep} that
$T_{\sigma\Phi}$ is bounded from $h^{p_1} \times h^{p_2}$ to $h^p$.

We next consider the symbol $\sigma(1-\Phi)$.
If $(\eta_1, \eta_2)$ belongs to the unit sphere $\Sigma$ on $(\R^n)^2$, then at least two of $\eta_1 + \eta_2$, $\eta_1 $ and $\eta_2$ are not equal to 0. Hence, by the compactness of $\Sigma$,  there exists $c > 0$ such that $\Sigma$ is covered by the following three open sets 
\begin{align*}
&V_0
=
\{(\eta_1, \eta_2) \in \Sigma \,:\,
|\eta_1| > c,\ |\eta_2| > c
\},
\\
&V_1
=
\{(\eta_1, \eta_2) \in \Sigma \,:\,
|\eta_1 + \eta_2| > c,\ |\eta_1| > c
\},
\\
&V_2
=
\{(\eta_1, \eta_2) \in \Sigma \,:\,
|\eta_1 + \eta_2| > c,\ |\eta_2| > c
\}.
\end{align*}
We define $\Gamma(V_j)$, $j=0,1,2$, by
\[
\Gamma(V_j)
=
\left\{
(\xi_1, \xi_2) \in (\R^n)^2 
\, : \,
(\xi_1, \xi_2)/|(\xi_1, \xi_2)| \in V_j
\right\},
\]
and take smooth functions $\Phi_j$, $j=0,1,2$, on $\R^n \times \R^n \setminus \{(0, 0)\}$ such that
\begin{align*}
&\Phi_0(\xi_1, \xi_2) + \Phi_1(\xi_1, \xi_2) + \Phi_2(\xi_1, \xi_2) = 1
\quad
\text{for}
\quad
(\xi_1, \xi_2) \in \R^n \times \R^n \setminus \{(0, 0)\},
\\
&\supp \Phi_j \subset \Gamma(V_j), \quad j=0,1,2.
\end{align*}
Then, we decompose $\sigma(1-\Phi) = \sum_{j=0}^2 \sigma_j$ 
with $\sigma_j = \sigma(1-\Phi)\Phi_j$, $j=0, 1, 2$.
Since
\begin{align*}
&
|\xi_1+ \xi_2|
\lesssim 
|\xi_1|
\approx 
|\xi_2|
\approx 
\sqrt{|\xi_1|^2 + |\xi_2|^2}
\quad
\text{if}
\quad
(\xi_1, \xi_2) 
\in 
\supp \sigma_0(x, \cdot, \cdot),
\\
&
|\xi_2|
\lesssim 
|\xi_1|
\approx 
|\xi_1+\xi_2|
\approx 
\sqrt{|\xi_1|^2 + |\xi_2|^2}
\quad
\text{if}
\quad
(\xi_1, \xi_2) 
\in 
\supp \sigma_1(x, \cdot, \cdot),
\\
&
|\xi_1|
\lesssim 
|\xi_2|
\approx 
|\xi_1+\xi_2|
\approx 
\sqrt{|\xi_1|^2 + |\xi_2|^2}
\quad
\text{if}
\quad
(\xi_1, \xi_2) 
\in 
\supp \sigma_2(x, \cdot, \cdot),
\end{align*}
and since the symbol $\sigma$ satisfies \eqref{symbol-newdecay}, 
the symbols $\sigma_0$, $\sigma_1$ and $\sigma_2$ satisfy 
\begin{align*}
\sigma_{0} \in BS^{(s-t, m-s_1-s_2+t), *1}_{0, 0},
\quad
\sigma_{1} \in BS^{(s-s_1+m+t, -s_2-t)}_{0, 0},
\quad
\sigma_{2} \in BS^{(-s_1-t, s-s_2+m+t)}_{0, 0},
\end{align*}
for $t \ge 0$.

Now, we first prove the assertion (1).
By symmetry, it is sufficient to  prove the result under the assumption (1-i), 
that is,
we may assume that $1< p_1 \le \infty$, $0< p_2 \le \infty$, $0< p < \infty$ and $1/p \le 1/p_1+1/p_2$.

\bigskip
\textit{Estimate for $T_{\sigma_0}$}.
In order to obtain the boundedness of $T_{\sigma_0}$, 
we need the assumption $1< p_1 \le \infty$, at least by our method.

Since $s > -\max\{n/p^{\prime}, n/2\}$ and $p_1 > 1$, we can take $t_0 \ge 0$ satisfying
\begin{align}\label{st}
s
+
\max
\left\{
\frac{n}{p^{\prime}},
\frac{n}{2}
\right\}
-
\min
\left\{
\frac{n}{p_1^{\prime}},
\frac{n}{2}
\right\}
<
 t_0
<
s
+
\max
\left\{
\frac{n}{p^{\prime}},
\frac{n}{2}
\right\}.
\end{align}
Now, 
since $\sigma_0 \in BS^{(s-t_0, m-s_1-s_2+t_0), *1}_{0,0} = BS^{(s-t_0, m_c-s+t_0), *1}_{0,0}$ and
the condition \eqref{st} says that
\begin{align*}
\begin{dcases}
-
\max \left\{\frac{n}{p^\prime}, \frac{n}{2}\right\}
<
s-t_0
<
\min \left\{\frac{n}{p_1^\prime}, \frac{n}{2}\right\}
-
\max \left\{\frac{n}{p^\prime}, \frac{n}{2}\right\}
\\
- \max \left\{\frac{n}{p_2}, \frac{n}{2}\right\}
<
m_c(p_1, p_2, p)-s+t_0
<
\min \left\{\frac{n}{p_1^\prime}, \frac{n}{2}\right\}
-
\max \left\{\frac{n}{p_2}, \frac{n}{2}\right\},
\end{dcases}
\end{align*} 
the desired boundedness of $T_{\sigma_0}$ follows from the assertion (2) of Theorem \ref{est-sep}.
Here we used the simple fact that 
\begin{align*}
\max
\left\{
\frac{n}{r}, \frac{n}{2}
\right\} 
= 
n - \min\left\{\frac{n}{r^{\prime}}, \frac{n}{2}\right\}, \quad 0< r \le \infty.
\end{align*}
We remark that the assumption $1 < p_1 \le \infty$ is necessary to apply Theorem \ref{est-sep}.

\bigskip
\textit{Estimate for $T_{\sigma_1}$}.
To estimate the operator $T_{\sigma_1}$, 
it is not necessary to assume $1< p_1 \le \infty$, 
but we use the condition $0< p< \infty$ here. 

Since $s_2 < \max\{n/p_2, n/2\}$ and $p \neq \infty$, we can take $t_2 \ge 0$ satisfying
\begin{align}\label{s2t}
-\min \left\{\frac{n}{p}, \frac{n}{2}\right\}
+
\max \left\{\frac{n}{p_2}, \frac{n}{2}\right\} 
-s_2
<
 t_2
<
\max \left\{\frac{n}{p_2}, \frac{n}{2}\right\}
-s_2.
\end{align}
Now, 
since $\sigma_1 \in BS^{(s-s_1+m+t_2, -s_2-t_2)}_{0,0} = BS^{(m_c+s_2+t_2, -s_2-t_2)}_{0,0}$ and
the condition \eqref{s2t} says that
\begin{align*}
\begin{dcases}
-\max \left\{\frac{n}{p_1}, \frac{n}{2}\right\}
<
m_c(p_1, p_2, p) + s_2 + t_2 
<
\min \left\{\frac{n}{p}, \frac{n}{2}\right\}
-
\max \left\{\frac{n}{p_1}, \frac{n}{2}\right\},
\\
-\max \left\{\frac{n}{p_2}, \frac{n}{2}\right\}
<
-s_2-t_2
<
\min \left\{\frac{n}{p}, \frac{n}{2}\right\}
-
\max \left\{\frac{n}{p_2}, \frac{n}{2}\right\},
\end{dcases}
\end{align*} 
we obtain the desired boundedness of $T_{\sigma_1}$ by Theorem \ref{est-sep} (1).
Notice that we used the condition $0< p <\infty$  to apply Theorem \ref{est-sep} (1).

\bigskip
\textit{Estimate for $T_{\sigma_2}$}.
Applying the same argument as for $T_{\sigma_1}$, we obtain the desired boundedness.
We notice that the assumption $1< p_1 \le \infty$ is not necessary, however we use the assumption $0< p < \infty$ in this case again.

\bigskip

Thus, the proof of the assertion (1) is complete.

\bigskip

Next, we prove the assertion (2).

\bigskip
\textit{Estimate for $T_{\sigma_0}$}.
Since $s \ge - \max\{n/p^{\prime}, n/2 \}-\kappa$, we can take $t_0 \ge 0$ satisfying
\begin{align}\label{stsc}
s
+
\max
\left\{
\frac{n}{p^{\prime}},
\frac{n}{2}
\right\}
-
\min
\left\{
\frac{n}{p_1^{\prime}},
\frac{n}{2}
\right\}
\le
 t_0
\le
s
+
\max
\left\{
\frac{n}{p^{\prime}},
\frac{n}{2}
\right\}
+\kappa.
\end{align}
Now, 
since $\sigma_0 \in BS^{(s-t_0, m-s_1-s_2+t_0), *1}_{0,0}$ and
the condition \eqref{stsc}
can be written as 
\begin{align*}
\begin{dcases}
s-t_0
\le
\min \left\{\frac{n}{p_1^\prime}, \frac{n}{2}\right\}
-
\max \left\{\frac{n}{p^\prime}, \frac{n}{2}\right\},
\\
m-s_1-s_2+t_0
\le
\min \left\{\frac{n}{p_1^\prime}, \frac{n}{2}\right\}
-
\max \left\{\frac{n}{p_2}, \frac{n}{2}\right\},
\end{dcases}
\end{align*} 
we obtain the desired boundedness of $T_{\sigma_0}$ by Proposition \ref{sc-prop}.

\bigskip
\textit{Estimate for $T_{\sigma_1}$}.
Since $s_2 \le \max \{n/p_1, n/2\}+\kappa$, we can take $t_2 \ge 0$ satisfying
\begin{align}\label{s2tsc}
-
\min
\left\{
\frac{n}{p},
\frac{n}{2}
\right\}
+
\max
\left\{
\frac{n}{p_2},
\frac{n}{2}
\right\}
-s_2
\le
t_2
\le
\max
\left\{
\frac{n}{p_2},
\frac{n}{2}
\right\}
-s_2
+
\kappa.
\end{align}
Now, 
since $\sigma_1 \in BS^{(s-s_1+m+t_2, -s_2-t_2)}_{0,0}$ and
the condition \eqref{s2tsc}
is equivalent to 
\begin{align*}
\begin{dcases}
s-s_1+m+t_2
\le
\min \left\{\frac{n}{p}, \frac{n}{2}\right\}
-
\max \left\{\frac{n}{p_1}, \frac{n}{2}\right\},
\\
-s_2-t_2
\le
\min \left\{\frac{n}{p}, \frac{n}{2}\right\}
-
\max \left\{\frac{n}{p_2}, \frac{n}{2}\right\},
\end{dcases}
\end{align*}
we obtain the desired boundedness of $T_{\sigma_1}$ by Proposition \ref{sc-prop}.

\bigskip
\textit{Estimate for $T_{\sigma_2}$}.
We can prove the desired boundedness in the same way as for $T_{\sigma_1}$.

\bigskip
The proof of Proposition \ref{mainprop} is complete.
\end{proof}

\begin{proof}[Proof of Theorem \ref{main-thm+}]
For the assertion (1),
it follows from Propositions \ref{tau-est}
and 
\ref{mainprop}
that the boundedness
\[
\Op(BS^{m_c(p_1, p_2, p) + s_1+s_2-s}_{0,0}) \subset B(h^{p_1}_{s_1} \times h^{p_2}_{s_2} \to h^p_{s})
\]
holds with 
$(1/p_1, 1/p_2) \in [0, \infty)^2 \setminus [1, \infty)^2$,
$0< p< \infty$,
and
$s_1, s_2, s \in \R$
satisfying \eqref{m-critical?*}.
By interpolation, we obtain the desired result for all $(1/p_1, 1/p_2) \in [1, \infty)^2$ and $0< p< \infty$ (see Appendix for details).
Hence, we obtain the desired boundedness for all $(1/p_1, 1/p_2) \in [0, \infty)^2$ and $0< p < \infty$.

We finally consider the case $(1/p_1, 1/p_2) \in [0, \infty)^2$ and $p= \infty$.
Our goal here is to show the boundedness
\begin{equation}\label{bdd-infty}
\Op(BS^{m_c(p_1, p_2, \infty)+s_1+s_2-s}_{0,0})
\subset
B(h^{p_1}_{s_1} \times h^{p_2}_{s_2} \to bmo_{s})
\end{equation}
for $(1/p_1, 1/p_2) \in [0, \infty)^2$ and $s_1, s_2, s \in \R$ satisfying
\begin{equation}\label{specialcase}
s_1 
< 
\max
\left\{
\frac{n}{p_1},
\frac{n}{2}
\right\}, 
\quad 
s_2 
< 
\max
\left\{
\frac{n}{p_2},
\frac{n}{2}
\right\},
\quad
s 
> 
-n.
\end{equation}
We have already prove the boundedness 
\begin{equation*}
\Op(BS^{m_c(1, q_2, q)+t_1+t_2-t}_{0,0})
\subset
B(h^1_{t_1} \times h^{q_2}_{t_2} \to L^{q}_{t})
\end{equation*}
for $0< q_2 \le \infty$, $0 < q < \infty$ and $t_1, t_2, t \in \R$ satisfying 
$t_1 <n$,
$t_2 < \max\{n/q_2, n/2\}$ and
$t > -\max\{n/q^{\prime}, n/2\}$.
By using this assertion with $q_2=p_2$, $q=p_1^\prime$, $t_1 = -s$, $t_2 =s_2$ and $t=-s_1$, we obtain the boundedness
\begin{equation}\label{h1hp2Lp1prime}
\Op(BS^{m_c(1, p_2, p_1^{\prime})-s+s_2+s_1}_{0,0})
\subset
B(h^1_{-s} \times h^{p_2}_{s_2} \to L^{p_1^{\prime}}_{-s_1})
\end{equation}
 for $1 < p_1 \le \infty$,  $0 < p_2 \le \infty$ and $s_1, s_2, s \in \R$ satisfying \eqref{specialcase}.
Since the space $bmo_s$ is the dual space of $h^{1}_{-s}$, we obtain
\begin{equation*}
\eqref{h1hp2Lp1prime}
\iff
\Op(BS^{m_c(1, p_2, p_1^{\prime})-s+s_2+s_1}_{0,0})
\subset
B(L^{p_1}_{s_1} \times h^{p_2}_{s_2} \to bmo_s)
\end{equation*}
(for details on such a duality argument, see Section \ref{sec-necessity}).
Furthermore, we see that 
\[
m_c(1, p_2, p_1^{\prime}) -s +s_2+s_1
=
m_c(p_1, p_2, \infty) +s_1 +s_2 - s,
\]
and hence we conclude that the boundedness \eqref{bdd-infty} holds for $1 < p_1 \le \infty$,  $0 < p_2 \le \infty$ and $s_1, s_2, s \in \R$ satisfying \eqref{specialcase}.

Similarly, the boundedness  \eqref{bdd-infty} follows for $0 < p_1 \le \infty$,  $1 < p_2 \le \infty$ and $s_1, s_2, s \in \R$ satisfying \eqref{specialcase}.
Thus, the boundedness \eqref{bdd-infty}
holds with $(1/p_1, 1/p_2) \in [0, \infty)^2 \setminus [1, \infty)^2$ and $s_1, s_2, s \in \R$ satisfying \eqref{specialcase}.
Applying the interpolation argument as above, 
we obtain the boundedness \eqref{bdd-infty} 
for all $(1/p_1, 1/p_2) \in [0, \infty)^2$ and $s_1, s_2, s \in \R$ satisfying \eqref{specialcase}.
The proof of Theorem \ref{main-thm+} (1) is complete.

The assertion (2) follows from Proposition \ref{mainprop} (2).
Thus, the proof of Theorem \ref{main-thm+} is complete.
\end{proof}

\section{Proof of Theorem \ref{main-thm2}}
\label{sec-necessity}
The purpose of this section is to prove Theorem \ref{main-thm2}.
We will use the Littlewood-Paley characterization of $h^p_s$ norm. 
We take the functions $\psi_k \in \Sh(\R^n)$, $k=0, 1, 2, \dots$, 
satisfying  \eqref{sumpsij} 
and satisfying
\eqref{supppsi0} and \eqref{supppsij} replaced by
\begin{align*}
&\supp \psi_{0} \subset \{|\xi| \le 2^{3/4} \},
\quad
\supp \psi_{k} \subset \{ 2^{k-3/4} \le |\xi| \le 2^{k+3/4} \},
\quad
k \ge 1, 
\end{align*}
and
\begin{align*}
&
\psi_0(\xi) = 1 
\quad
\text{on}
\quad
\{|\xi| \le 2^{1/4} \},
\quad
\psi_k(\xi) = 1 
\quad
\text{on}
\quad
\{ 2^{k-1/4} \le |\xi| \le 2^{k+1/4} \},
\quad
k \ge 1.
\end{align*}

The following proposition plays an important role to prove Theorem \ref{main-thm2}. The basic ideas of the proof below comes from \cite[Proof of Lemma 4.2]{KMT-multi-2022} and \cite[Proof of Proposition 5.1]{Shida}.

\begin{prop}\label{key-prop*}
Let $1 < p_1, p_2 < \infty$, $0< p < \infty$ and $m \in \R$. If the boundedness
\begin{equation}\label{bdd-necessity}
\Op(BS^{m}_{0,0})
\subset
B(L^{p_1}_{s_1} \times L^{p_2}_{s_2} \to h^p_s)
\end{equation}
holds, then 
\begin{align}\label{mmm}
m 
\le
\min
\left\{
\frac{n}{p},
\frac{n}{2}
\right\}
-
\max
\left\{
\frac{n}{p_1},
\frac{n}{2}
\right\}
-
\max
\left\{
\frac{n}{p_2},
\frac{n}{2}
\right\}
+s_1
+s_2
-s
\end{align}
and
\begin{align}
&m 
\le
\min
\left\{
\frac{n}{p},
\frac{n}{2}
\right\}
-
\max
\left\{
\frac{n}{p_j},
\frac{n}{2}
\right\}
+s_j
-s,
\quad
j=1,2,
\label{mppj}
\\
&m 
\le
n
-
\max
\left\{
\frac{n}{p_1},
\frac{n}{2}
\right\}
-
\max
\left\{
\frac{n}{p_2},
\frac{n}{2}
\right\}
+s_1
+s_2.
\label{mp1p2}
\end{align}
\end{prop}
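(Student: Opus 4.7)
The plan is to prove each of the three bounds \eqref{mmm}--\eqref{mp1p2} by the \emph{test-symbol} method: for each bound I exhibit a one-parameter family $(\sigma_N, f_{1,N}, f_{2,N})_{N \in \N}$ with $\sigma_N \in BS^m_{0,0}$ having $N$-uniformly bounded seminorms, and compute the asymptotics of both sides of
\[
\|T_{\sigma_N}(f_{1,N}, f_{2,N})\|_{h^p_s}
\lesssim
\|f_{1,N}\|_{L^{p_1}_{s_1}}\|f_{2,N}\|_{L^{p_2}_{s_2}}
\]
as powers of $N$. The closed graph theorem, applied to the inclusion \eqref{bdd-necessity}, guarantees that the implicit constant depends only on finitely many of the $BS^m_{0,0}$-seminorms of $\sigma_N$ (the same reduction as the one performed just below Theorem \ref{main-thm+}), so the estimate is uniform in $N$; letting $N \to \infty$ and comparing exponents will yield the required upper bound on $m$ in each case.

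For the bilinear critical-order bound \eqref{mmm} I follow \cite[Lemma 4.2]{KMT-multi-2022}. Take $\sigma_N(\xi_1,\xi_2) = N^m \sum_{\nu_1, \nu_2 \in \Z^n \cap N\Omega} \chi(\xi_1 - \nu_1)\chi(\xi_2 - \nu_2)$, with $\chi$ a Schwartz bump and $\Omega$ a fixed compact subset of $\R^n \setminus \{0\}$; because $|\xi_1| + |\xi_2| \approx N$ on the support, $\sigma_N \in BS^m_{0,0}$ uniformly in $N$. Depending on whether each $p_j$ is $\ge 2$ or $< 2$, take $f_{j,N}$ either as a Rademacher superposition $\widehat{f_{j,N}} = \sum_{\nu_j \in \Z^n \cap N\Omega} \epsilon_{j,\nu_j}\psi(\xi_j - \nu_j)$ or as a single spatially rescaled Schwartz bump, so that a generic realization satisfies $\|f_{j,N}\|_{L^{p_j}_{s_j}} \approx N^{s_j + \max\{n/p_j, n/2\}}$. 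The Littlewood--Paley characterization of the $h^p_s$ norm combined with the counting encoded in Lemma \ref{lem-Bclass} then produces $\|T_{\sigma_N}(f_{1,N}, f_{2,N})\|_{h^p_s} \approx N^{m + s + \min\{n/p, n/2\}}$ for the same realization, and comparing exponents gives \eqref{mmm}.

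The remaining two bounds come from variants of the same construction, distinguished by the range over which the frequency variables are allowed to vary. For \eqref{mppj} (say $j=1$; the case $j=2$ is symmetric) constrain $\xi_2$ to a bounded compact set while letting $\xi_1$ range over an $N$-scale lattice; concretely, take $\sigma_N(\xi_1,\xi_2) = N^m \bigl(\sum_{\nu_1 \in \Z^n \cap N\Omega}\chi(\xi_1-\nu_1)\bigr)\chi(\xi_2)$, with $f_{1,N}$ as in the previous paragraph and $f_{2,N}$ an $N$-independent Schwartz function whose Fourier transform equals $1$ on the support of $\chi$. Then $\sigma_N \in BS^m_{0,0}$ uniformly, $\|f_{2,N}\|_{L^{p_2}_{s_2}} \approx 1$, and $T_{\sigma_N}(f_{1,N}, f_{2,N})$ reduces to $N^m$ times a frequency multiplier applied to $f_{1,N}$ times a fixed Schwartz profile; the exponent comparison then reproduces exactly the necessity calculation behind the linear Theorem B, giving \eqref{mppj}. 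For \eqref{mp1p2}, instead constrain the \emph{sum} $\xi_1 + \xi_2$ to a bounded compact set: take $\sigma_N$ supported near pairs $(\nu_1, \nu_2) \in (\Z^n)^2$ with $|\nu_j| \approx N$ and $|\nu_1 + \nu_2| \approx 1$, and use Rademacher-type $f_{j,N}$ as before. Because $\xi_1 + \xi_2$ stays in a fixed compact set on the spectral support of the output, the $h^p_s$ norm of $T_{\sigma_N}(f_{1,N}, f_{2,N})$ is comparable to its $L^p$ norm with an $N$-independent constant (so the weight $s$ drops out), while the same Rademacher bookkeeping replaces the output exponent $\min\{n/p, n/2\}$ by the full $n$, leading to \eqref{mp1p2}.

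The main obstacle is the Khinchin-randomization bookkeeping underlying \eqref{mmm} and \eqref{mp1p2}: Khinchin's inequality has to be applied in both directions---to bound $\|f_{j,N}\|_{L^{p_j}}$ from above and $\|T_{\sigma_N}(f_{1,N}, f_{2,N})\|_{h^p_s}$ from below---and then a single sign realization realizing both bounds simultaneously must be extracted. This is precisely the calculation of \cite[Lemma 4.2]{KMT-multi-2022} and \cite[Proposition 5.1]{Shida}; the only new feature in the present Sobolev setting is that the weights $s_1, s_2, s$ contribute multiplicative factors $N^{s_j}$ and $N^{-s}$ that do not interact with the combinatorial heart of the argument.
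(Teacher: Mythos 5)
Your overall strategy (test symbols with uniform $BS^m_{0,0}$ seminorms via the closed graph theorem, lattice constructions, randomization and Khintchine) is in the spirit of the paper, but the central computation for \eqref{mmm} does not work as stated, and the normalization you chose points in the wrong direction. With your $f_{j,N}$ having Fourier coefficients of height $O(1)$ on $\approx N^n$ lattice points, the claimed asymptotics $\|f_{j,N}\|_{L^{p_j}_{s_j}}\approx N^{s_j+\max\{n/p_j,n/2\}}$ and $\|T_{\sigma_N}(f_{1,N},f_{2,N})\|_{h^p_s}\approx N^{m+s+\min\{n/p,n/2\}}$ would, upon comparison, give $m\le \max\{n/p_1,n/2\}+\max\{n/p_2,n/2\}-\min\{n/p,n/2\}+s_1+s_2-s$, which is not \eqref{mmm} (the $\max$ terms enter with the wrong sign). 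To reach the critical order one needs test functions whose $L^{p_j}$ norms are \emph{small} ($O(1)$ after the $s_j$ weight) while their Fourier coefficients on the annulus are as \emph{large} as possible, namely of height $2^{-\ell b_j}$ with $b_j\to n-\max\{n/p_j,n/2\}$; a generic Rademacher superposition only gives height $1$ at the cost of norm $N^{n/2}$, and for $p_j<2$ no random-sign construction achieves the exponent $n/p_j$. This is exactly why the paper uses Wainger's special trigonometric series $f_{a_j,b_j,\ell}$ as (deterministic) inputs and places both the phase-cancelling factors and the Rademacher signs in the \emph{symbol}, $c_{k_1,k_2}=r_{k_1+k_2}(\omega)e^{-i|k_1|^{a_1}}e^{-i|k_2|^{a_2}}$, so that the sum over pairs $k_1+k_2=k$ is fully coherent ($\approx 2^{\ell n}$ terms adding in phase) and Khintchine is applied only on the output side. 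With your deterministic symbol and randomized inputs the output is a bilinear chaos whose size is only the square root of the number of pairs, and the sharp constant is lost even when all $p_j\ge 2$.

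A second, independent gap is the regime $p>2$, where $\min\{n/p,n/2\}=n/p<n/2$: any Khintchine-based lower bound produces the $\ell^2$ quantity and hence can only yield the exponent $n/2$, so your single construction cannot give \eqref{mmm} (nor \eqref{mppj}) there. The paper handles this by a case analysis: the direct randomized argument for $0<p\le 2$; a duality/transposition step for $p>2$ with some $p_j\ge 2$, using that $BS^m_{0,0}$ is invariant under $\sigma\mapsto\sigma^{*1}$ and that $1<p_1,p_2<\infty$ makes the dual exponents admissible; and a separate, entirely non-random dyadic scaling example for $2<p<\infty$, $p_1,p_2\le 2$. Likewise \eqref{mppj} is proved directly only for $0<p\le 1$ and is otherwise deduced by duality from \eqref{mp1p2}. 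Finally, for \eqref{mp1p2} the exponent $n$ (rather than $n/2$) comes from \emph{coherent} $\ell^1$-summation over the $\approx 2^{\ell n}$ lattice points, achieved by choosing deterministic phase-cancelling symbol coefficients so that all output lands at frequency $\approx 0$; no Khintchine is used there, and "the same Rademacher bookkeeping" with random inputs would again only produce $n/2$. These are missing ideas, not routine details, so the proposal as written does not establish the proposition.
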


\begin{rem}
If we set $\kappa = \min\{n/p, n/2\}-\max\{n/p_1, n/2\}-\max\{n/p_2, n/2\}+s_1+s_2-s -m$, then the conditions \eqref{mppj} and \eqref{mp1p2} can be written as
\begin{align*}
&s_j \le \max\{n/p_j, n/2\} + \kappa, \quad j=1,2,
\\
&s \ge -\max\{n/p^{\prime}, n/2\} -\kappa,
\end{align*}
that is, these conditions means \eqref{necessity-3} (or  \eqref{m-subcritical?}).
\end{rem}

\begin{proof}
If the boundedness \eqref{bdd-necessity} holds, then 
it follows from the closed graph theorem that there exists a positive integer $N$ such that
\begin{equation}\label{closedgraph}
\|T_{\sigma}\|_{L^{p_1}_{s_1} \times L^{p_2}_{s_2} \to h^{p}_s}
\lesssim
\max_{|\alpha|, |\beta_1|, |\beta_2| \le N}
\left\|
\la (\xi_1, \xi_2) \ra^{-m}
\pa_x^{\alpha}
\pa_{\xi_1}^{\beta_1}
\pa_{\xi_2}^{\beta_2}
\sigma(x, \xi_1, \xi_2)
\right\|_{L^{\infty}_{x,\xi_1,\xi_2}}
\end{equation}
for all $\sigma \in BS^{m}_{0,0}$ (see \cite{BMNT}).

\bigskip
\noindent
\textit{Necessity of \eqref{mmm}} :
We shall divide the proof into the following three cases;
\begin{enumerate}\renewcommand{\labelenumi}{(\roman{enumi})}
\item
$0< p \le 2$ and $(p_1, p_2) \in (1, \infty)^2$,
\item
$2 < p < \infty$ and $(p_1, p_2 ) \in (1, \infty)^2 \setminus (1,2]^2$,
\item
$2 < p < \infty$ and $(p_1, p_2 ) \in (1, 2]^2$.
\end{enumerate}

\bigskip
\noindent
\textit{Case} (i).
Let $\varphi, \widetilde{\varphi} \in \Sh(\R^n)$ be such that
\begin{align}
&\supp \varphi \subset [-1/4,1/4]^n,
\quad
|\F^{-1}\varphi(x)| \ge 1,
\quad
\text{on}
\quad
[-\pi, \pi]^n
\label{supp-vphi}
\\
&
\supp \widetilde{\varphi} \subset [-1/2,1/2]^n,
\quad
\widetilde{\varphi}(\xi) = 1 
\quad
\text{on}
\quad
\supp \varphi.
\label{supp-vphi*}
\end{align}
Let $\{c_{k_1, k_2}\}_{k_1,k_2 \in \Z^n}$ be a sequence of complex numbers satisfying $\sup_{k_1,k_2 \in \Z^n}|c_{k_1,k_2}| \le 1$.
For a sufficiently large positive integer $\ell$, 
we define the  bilinear Fourier multiplier $\sigma_{\ell}$ by
\begin{align*}
&\sigma_{\ell}(\xi_1, \xi_2)
=
\sum_{(k_1, k_2) \in D_{\ell}}
c_{k_1,k_2}
\la (k_1, k_2) \ra^{m}
\vphi(\xi_1-k_1)
\vphi(\xi_2-k_2),
\end{align*}
where
\begin{align*}
&D_{\ell}
=
\{
(k_1,k_2) \in (\Z^n)^2
\ :\ 
k_1, k_2, k_1+k_2 \in \Lambda_{\ell}
\},
\\
&\Lambda_{\ell}
=
\{
k \in \Z^n
\,
:
\,
2^{\ell-1/8}
\le
|k|
\le
2^{\ell+1/8}
\}.
\end{align*}
Since $\sup_{k_1, k_2}|c_{k_1,k_2}| \le 1$ and $|k_j| \approx |\xi_j|$ if $\xi_j \in \supp \vphi(\cdot-k_j)$, $j=1,2$, we have
\[
|\pa_{\xi_1}^{\beta_1}\pa_{\xi_2}^{\beta_2}\sigma_{\ell}(\xi_1,\xi_2)|
\lesssim
(1+|\xi_1|+|\xi_2|)^{m},
\]
that is, $\sigma_{\ell} \in BS^m_{0,0}$.
Here, it should be emphasized that the implicit constant is independent of $c_{k_1,k_2}$ and $\ell$.
Hence,  by \eqref{closedgraph}, we obtain
\begin{equation}\label{opnormest}
\|T_{\sigma_{\ell}}\|_{L^{p_1}_{s_1} \times L^{p_2}_{s_2} \to h^{p}_{s}}
\lesssim
1
\end{equation}
with the implicit constant independ of $c_{k_1,k_2}$ and $\ell$. 

Let $\phi_{\ell} \in \Sh(\R^n)$ be such that
\[
\supp \phi_{\ell} \subset \{2^{\ell-1/2} \le |\xi| \le 2^{\ell+1/2}\}
\quad
\text{and}
\quad
\phi_{\ell} = 1 
\ \ 
\text{on}
\ \ 
\{2^{\ell-1/4} \le |\xi| \le 2^{\ell+1/4}\}.
\]
For $\epsilon > 0$ and $0< a_1, a_2 <1$, we set
\[
b_j
=
\frac{n}{2}
+
(1-a_j)
\left(
\frac{n}{2}
-
\frac{n}{p_j}
\right) 
+ \epsilon,
\quad
j=1,2.
\]
We define the functions $f_{a_j, b_j, \ell}$ by
\begin{equation}\label{goodexample}
\widehat{f_{a_j, b_j, \ell}}(\xi_j) = \phi_{\ell}(\xi_j)\widehat{f_{a_j, b_j}}(\xi_j),
\quad
j=1,2,
\end{equation}
where
\begin{equation*}
f_{a_j, b_j}(x_j)
=
\sum_{\ell_j \in \Z^n \setminus \{0\}} 
|\ell_j|^{-b_j} e^{i |\ell_j|^{a_j}} 
e^{i\ell_j \cdot x_j}
\F^{-1}\widetilde{\varphi}(x_j),
\quad
j=1,2.
\end{equation*}
It was proved by Wainger \cite{Wainger} 
that  $f_{a_j, b_j} \in L^{p_j}$ (see also \cite[Lemma 4.1 and the proof of Lemma 4.2]{KMT-multi-2022}).
Thus, noticing that $|\xi_j| \approx 2^{\ell}$ if $\xi_j \in \supp \phi_{\ell}$, 
we have 
\begin{align}\label{fabequiv}
\|f_{a_j, b_j, \ell}\|_{L^{p_j}_{s_j}} 
&\approx 
\left\|
\left(
\sum_{k \ge 0}
2^{2ks_j}
|\psi_k(D)f_{a_j, b_j, \ell}|
\right)^{1/2}
\right\|_{L^{p_j}}
\lesssim
2^{\ell s_j}.
\end{align}
On the other hand, 
if $k_j \in \Lambda_\ell$, then
$\supp \vphi (\cdot - k_j) \subset \{2^{\ell-1/4} \le |\xi_j| \le 2^{\ell+1/4}\}$
since $\ell$ is sufficiently large.
Hence, we have
$
\vphi(\xi_j -k_j) \phi_{\ell}(\xi_j)
=
\vphi(\xi_j -k_j)
$
for $k_j \in \Lambda_\ell$. 
Furthermore, since $\vphi\widetilde{\vphi} = \vphi$, we have
\begin{align*}
&T_{\sigma_\ell}(f_{a_1, b_1, \ell}, f_{a_2, b_2, \ell})(x)
\\
&=
\sum_{(k_1, k_2) \in D_{\ell}}
c_{k_1,k_2}
\la (k_1, k_2) \ra^{m}
|k_1|^{-b_1} e^{i |k_1|^{a_1}} 
|k_2|^{-b_2} e^{i |k_2|^{a_2}}
e^{i(k_1+k_2) \cdot x}
\{\F^{-1}\vphi(x)\}^2.
\end{align*}
Let $\{r_k(\omega)\}_{k \in \Z^n}$ be a sequence of Rademacher functions  on $[0,1]^n$ enumerated in such a way that their index set is $\Z^n$
(for the definition of the Rademacher function, see, e.g., \cite[Appendix C]{Grafakos-Classical}).
If we choose $c_{k_1,k_2}$ as 
\[
c_{k_1,k_2}= r_{k_1+k_2}(\omega)e^{-i|k_1|^{a_1}}e^{-i|k_2|^{a_2}},
\] 
then 
\begin{align*}
T_{\sigma_\ell}(f_{a_1, b_1, \ell}, f_{a_2, b_2, \ell})(x)
&=
\sum_{(k_1, k_2) \in D_{\ell}}
r_{k_1+k_2}(\omega)
\la (k_1, k_2) \ra^{m}
|k_1|^{-b_1} 
|k_2|^{-b_2} 
e^{i(k_1+k_2) \cdot x}
\{\Phi(x)\}^2
\\
&=
\sum_{k \in \Lambda_{\ell}}
r_{k}(\omega)
e^{i k \cdot x}
\left(
\sum_{\substack{k_1, k_2 \in \Lambda_{\ell} \\ k_1+k_2=k }}
\la (k_1, k_2) \ra^{m}
|k_1|^{-b_1} 
|k_2|^{-b_2} 
\right)
\{\Phi(x)\}^2
\\
&=
\sum_{k \in \Lambda_{\ell}}
r_{k}(\omega)
e^{i k \cdot x}
d_k 
\{\Phi(x)\}^2.
\end{align*}
where $\Phi = \F^{-1}\vphi$ and
\[
d_k
=
\sum_{\substack{k_1, k_2 \in \Lambda_{\ell}  \\ k_1+k_2=k }}
\la (k_1, k_2) \ra^{m}
|k_1|^{-b_1} 
|k_2|^{-b_2}.
\]
Since $\ell$ is sufficiently large, we have
$
\supp [\vphi * \vphi](\cdot -k)
\subset
\{2^{\ell -1/4} \le |\zeta| \le 2^{\ell+1/4}\}
$ 
for 
$k \in \Lambda_{\ell}$. 
Thus we have
\[
\F[T_{\sigma_\ell}(f_{a_1, b_1, \ell}, f_{a_2, b_2, \ell})](\zeta)
=
\sum_{k \in \Lambda_{\ell}}
r_{k}(\omega)
d_k
[\vphi * \vphi](\zeta -k),
\]
and consequently, 
$\supp\F[T_{\sigma_\ell}(f_{a_1, b_1, \ell}, f_{a_2, b_2, \ell})] 
\subset 
\{2^{\ell -1/4} \le |\zeta| \le 2^{\ell+1/4}\}$.
Hence,  by the Littlewood-Paley characterization of $h^p_s$-norm, we have
\begin{align}\label{equivhps}
\begin{split}
\|T_{\sigma_\ell}(f_{a_1, b_1, \ell}, f_{a_2, b_2, \ell})\|_{h^{p}_{s}} 
&\approx
\left\|
\left(
\sum_{k =0}^{\infty}
2^{2ks}
|\psi_k(D)T_{\sigma_\ell}(f_{a_1, b_1, \ell}, f_{a_2, b_2, \ell})|^2
\right)^{1/2}
\right\|_{L^p}
\\
&=
 2^{\ell s} \|T_{\sigma_\ell}(f_{a_1, b_1, \ell}, f_{a_2, b_2, \ell})\|_{L^{p}}.
\end{split}
\end{align}
Furthermore, our assumption \eqref{supp-vphi} gives
\begin{align}\label{esttoKhintchine}
\left\|
\sum_{k \in \Lambda_{\ell}}
r_{k}(\omega)
e^{i k \cdot x}
d_k 
\{\Phi(x)\}^2
\right\|_{L^{p}_x}
&\ge
\left\|
\sum_{k \in \Lambda_{\ell}}
r_{k}(\omega)
e^{i k \cdot x}
d_k 
\right\|_{L^{p}_x[-\pi, \pi]^n}.
\end{align}
Thus, combining \eqref{equivhps}, \eqref{esttoKhintchine} and \eqref{fabequiv}, 
we obtain
\begin{align*}
\int_{[-\pi, \pi]^n}
\left|
\sum_{k \in \Lambda_{\ell}}
r_{k}(\omega)
e^{i k \cdot x}
d_k 
\right|^{p}
dx
\lesssim
2^{\ell(-s+s_1+s_2)p}.
\end{align*}
It should be emphasized that the implicit constant is independent of $\omega$ by \eqref{opnormest}. 
Hence, by integrating over $\omega \in [0,1]^n$, we obtain
\begin{equation*}
\int_{[-\pi, \pi]^n}
\int_{[0,1]^n}
\left|
\sum_{k \in \Lambda_{\ell}}
r_{k}(\omega)
e^{i k \cdot x}
d_k
\right|^{p}
d\omega
dx
\lesssim
2^{\ell(-s+s_1+s_2)p}.
\end{equation*}
It follows from Khintchine's inequality (see, e.g., \cite[Appendix C]{Grafakos-Classical})
that 
\begin{align*}
\int_{[-\pi, \pi]^n}
\int_{[0,1]^n}
\left|
\sum_{k \in \Lambda_{\ell}}
r_{k}(\omega)
e^{i k \cdot x}
d_k
\right|^{p}
d\omega
dx
&\approx
\int_{[-\pi, \pi]^n}
\left(
\sum_{k \in \Lambda_{\ell}}
|e^{ik \cdot x}d_k|^2
\right)^{p/2}
dx
\\
&\approx
\left(
\sum_{k \in \Lambda_{\ell}}
|d_k|^2
\right)^{p/2}.
\end{align*}
Since 
$|d_k| \approx 2^{\ell(m-b_1-b_2)}2^{\ell n}$ if $k \in \Lambda_{\ell}$,
we have
\[
\left(
\sum_{k \in \Lambda_{\ell}}
|d_k|^2
\right)^{1/2}
\approx
2^{\ell(m-b_1-b_2)}2^{\ell n}2^{\ell n/2}.
\]
Combining above estimates, we obtain
\[
2^{\ell(m-b_1-b_2)}2^{\ell n}2^{\ell n/2}2^{\ell(-s_1-s_2+s)}
\lesssim
1
\]
for sufficiently large $\ell$. 
Thus we obtain
\begin{align*}
m 
&\le b_1+b_2 - n- \frac{n}{2}  + s_1+s_2 -s 
\\
&=
-\frac{n}{2}
+
(1-a_1)
\left(
\frac{n}{2} - \frac{n}{p_1}
\right)
+
(1-a_2)
\left(
\frac{n}{2} - \frac{n}{p_2}
\right)
+ s_1+s_2 -s
+2\epsilon.
\end{align*}
Taking the limits as $\epsilon \to 0$ and 
\begin{align}\label{limit}
\begin{cases}
a_1 \to 1,\ a_2 \to 1 
\quad
&\text{if}\ 
 2 \le p_1, p_2 < \infty;
\\
a_1 \to 1,\ a_2 \to 0 
\quad
&\text{if}\ 
 1 < p_2 \le 2 \le p_1 < \infty;
 \\
 a_1 \to 0,\ a_2 \to 1 
\quad
&\text{if}\ 
 1 < p_1 \le 2 \le p_2 < \infty;
 \\
 a_1 \to 0,\ a_2 \to 0 
\quad
&\text{if}\ 
  1 < p_1, p_2 \le 2,
\end{cases}
\end{align}
we obtain
\[
m 
\le
\frac{n}{2}
-
\max
\left\{
\frac{n}{p_1},
\frac{n}{2}
\right\}
-
\max
\left\{
\frac{n}{p_2},
\frac{n}{2}
\right\}
+ s_1 + s_2 -s .
\]

\bigskip
\noindent
\textit{Case} (ii).
It is sufficient to consider the case $2 \le p_1 < \infty$ by symmetry.

If the bilinear operator $T_{\sigma}$ is bounded from $L^{p_1}_{s_1} \times L^{p_2}_{s_2}$ to $L^p_s$, then the operator $T_{\sigma^{*1}}$ is bounded from $L^{p^\prime}_{-s} \times L^{p_2}_{s_2}$ to $L^{p_1^\prime}_{-s_1}$ where $\sigma^{*1}$ is a bilinear symbol given by
\[
\int_{\R^n}
T_{\sigma}(f_1, f_2)(x) g(x)
\, dx
=
\int_{\R^n}
T_{\sigma^{*1}}(g, f_2)(x) f_1(x)
\, dx.
\]
Since the class $BS^m_{0,0}$ is invariant under the transposition $\sigma \mapsto \sigma^{*1}$ (see \cite[Theorem 1.2]{BMNT}), 
the boundedness \eqref{bdd-necessity} in this case implies
\[
\Op(BS^{m}_{0,0})
\subset
B(L^{p^{\prime}}_{-s} \times L^{p_2}_{s_2} \to L^{p_1^\prime}_{-s_1}).
\]
Since $1< p_1^{\prime} \le 2$ and $1 <p^\prime \le 2$, it follows from the \textit{Case} (i) that
\begin{align*}
m 
&\le
\frac{n}{2}
-
\frac{n}{p^\prime}
-
\max
\left\{
\frac{n}{p_2},
\frac{n}{2}
\right\}
+ (-s) +s_2 -(-s_1) 
\\
&=
\frac{n}{p}
-
\frac{n}{2}
-
\max
\left\{
\frac{n}{p_2},
\frac{n}{2}
\right\}
+ s_1 +s_2 -s, 
\end{align*}
which implies the desired conclusion.

\bigskip
\noindent
\textit{Case} (iii). 
Let $\Psi \in \Sh((\R^n)^2)$ be such that
\begin{align*}
&\supp \Psi \subset 
\{
2^{-1/4} \le |\xi_1| \le 2^{1/4},
\ \ 
2^{-4-1/4} \le |\xi_2| \le 2^{-4+1/4}
\},
\\
&\Psi(\xi_1, \xi_2) = 1 
\quad
\text{on}
\quad
\{
2^{-1/8} \le |\xi_1| \le 2^{1/8},
\ \ 
2^{-4-1/8} \le |\xi_2| \le 2^{-4+1/8}
\}.
\end{align*}
We take $\widetilde{\psi}_j \in \Sh(\R^n)$, $j=1,2$, satisfying
\[
\supp \widetilde{\psi}_1 \subset \{ 2^{-1/8}  \le |\xi_1| \le 2^{1/8}\}
\quad
\text{and}
\quad
\supp \widetilde{\psi}_2 \subset \{ 2^{-4-1/8}  \le |\xi_2| \le 2^{-4+1/8}\}, 
\]
and set
\begin{align*}
&\sigma(\xi_1, \xi_2)
=
\sum_{k=0}^\infty
2^{km}
\Psi(2^{-k}\xi_1, 2^{-k}\xi_2),
\\
&
\widehat{f_{j, \ell}}(\xi_j)
=
2^{\ell n(1/p_j -1)}
\widetilde{\psi}_j(2^{-\ell} \xi_j), 
\quad
j=1,2.
\end{align*}
By the support conditions of $\widetilde{\psi}_1$ and $\widetilde{\psi}_2$, 
we have
$\|f_{j, \ell}\|_{L^{p_j}_{s_j}} \approx 2^{\ell s_j}$.
It follows from the support conditions of $\Psi$ and $\widetilde{\psi}$ that
\begin{align*}
T_{\sigma}(f_{1, \ell}, f_{2, \ell})(x)
=
2^{\ell (m+n/p_1+n/p_2)}
[\F^{-1}\widetilde{\psi}_1](2^{\ell}x)
[\F^{-1}\widetilde{\psi}_2](2^{\ell}x).
\end{align*}
Since 
\[
\supp \F[T_{\sigma}(f_{1, \ell}, f_{2, \ell})]
\subset
\supp [\widetilde{\psi}_1 * \widetilde{\psi}_2](2^{-\ell} \cdot) 
\subset 
\{ 2^{\ell-1/4} \le |\xi| \le 2^{\ell+1/4}\},
\]
we obtain
\begin{align*}
\|T_{\sigma}(f_{1, \ell}, f_{2,\ell})\|_{L^{p}_s}
&\approx
\left\|
\left(
\sum_{k=0}^\infty
2^{2ks}
|\psi_k(D)T_{\sigma}(f_{1, \ell}, f_{2, \ell})|^2
\right)^{1/2}
\right\|_{L^p}
\\
&=
2^{\ell (m + n/p_1 +n/p_2-n/p +s)}
\left\|(\F^{-1}\widetilde{\psi}_1)(\F^{-1}\widetilde{\psi}_2)\right\|_{L^{p}}.
\end{align*}
Thus, our assumption gives that
\[
2^{\ell (m + n/p_1 +n/p_2-n/p +s)}
\approx
\|T_{\sigma}(f_1, f_2)\|_{L^{p}_s}
\lesssim
\|f_1\|_{L^{p_1}_{s_1}}
\|f_2\|_{L^{p_2}_{s_2}}
\approx
2^{\ell(s_1+s_2)},
\]
and consequently we conclude that
\[
m \le
\frac{n}{p}
-
\frac{n}{p_1}
-
\frac{n}{p_2}
+s_1+s_2-s.
\]

\bigskip
The proof of the necessity of \eqref{mmm} is complete.

\bigskip
\noindent
\textit{Necessity of \eqref{mp1p2}} :
For a large $\ell \in \N$, we set 
\begin{align*}
&\sigma_\ell(\xi_1, \xi_2)
=
\sum_{\mu \in \Lambda_\ell}
c_{\mu} \la \mu \ra^{m}
\vphi(\xi_1- \mu)
\vphi(\xi_2+ \mu),
\end{align*}
where $\{c_{\mu}\}_{\mu \in \Z^n}$ is a sequence of complex numbers satisfying $\sup_{\mu \in \Z^n} |c_\mu| \le 1$. 
Then, we can check that $\sigma_{\ell} \in BS^m_{0,0}$.
We use the functions $f_{a_j, b_j, \ell}$, $j=1,2$, given in \eqref{goodexample}.
Since $\supp \vphi(\cdot \pm \mu) \subset \{2^{\ell-1/4} \le |\xi| \le 2^{\ell +1/4}\}$ if $\mu \in \Lambda_\ell$, then we have
\[
\varphi(\xi_1 - \mu)\varphi(\xi_2 + \mu)\phi_{\ell}(\xi_1)\phi_{\ell}(\xi_2)= \varphi(\xi_1 - \mu)\varphi(\xi_2 + \mu)
\] 
Hence, we have by the support properties of $\vphi$ and $\widetilde{\vphi}$
\begin{align*}
&T_{\sigma_\ell}(f_{a_1, b_1, \ell}, f_{a_2, b_2, \ell})(x)
\\
&=
\sum_{\mu \in \Lambda_\ell}
c_{\mu} \la \mu \ra^{m}
|\mu|^{-b_1-b_2}e^{i(|\mu|^{a_1}+|\mu|^{a_2})}
\int_{\R^{2n}}
e^{i x \cdot (\xi_1+\xi_2)}
\vphi(\xi_1- \mu)
\vphi(\xi_2+ \mu)
\,
d\xi_1
d\xi_2
\\
&=
\sum_{\mu \in \Lambda_\ell}
c_{\mu} \la \mu \ra^{m}
|\mu|^{-b_1-b_2}e^{i(|\mu|^{a_1}+|\mu|^{a_2})}
\{\Phi(x)\}^2,
\end{align*}
where $\Phi = \F^{-1} \varphi$.
Thus, if we choose $c_\mu = e^{-i(|\mu|^{a_1}+|\mu|^{a_2})}$, then
\begin{align*}
T_{\sigma_\ell}(f_{a_1, b_1, \ell}, f_{a_2, b_2, \ell})(x)
=
\sum_{\mu \in \Lambda_\ell}
\la \mu \ra^{m}
|\mu|^{-b_1-b_2}
\{\Phi(x)\}^2.
\end{align*}
Thus, we have
\[
\|T_{\sigma_\ell}(f_{a_1, b_1, \ell}, f_{a_2, b_2, \ell})\|_{h^p_{s}}
= 
c
\left(
\sum_{\mu \in \Lambda_\ell}
\la \mu \ra^{m}
|\mu|^{-b_1-b_2}
\right).
\] 
with $c = \|\Phi^2\|_{h^{p}_s}$.
Thus, since $|\mu| \approx 2^{\ell}$ if $\mu \in \Lambda_{\ell}$, and since $|\Lambda_{\ell}| \approx 2^{\ell n}$, we obtain
\begin{align*}
2^{\ell(m-b_1-b_2+n)}
\lesssim
2^{\ell(s_1+s_2)}.
\end{align*}
Since $\ell$ is arbitrarily large, we obtain
\begin{align*}
m
&\le 
b_1+b_2-n+s_1+s_2
\\
&=
(1-a_1)
\left(
\frac{n}{2}
-
\frac{n}{p_1}
\right)
+
(1-a_2)
\left(
\frac{n}{2}
-
\frac{n}{p_2}
\right)
+
s_1+s_2
+2\epsilon.
\end{align*}
Taking the limits as in \eqref{limit} and $\epsilon \to 0$, 
we conclude that
\[
m
\le
n
-
\max
\left\{
\frac{n}{p_1},
\frac{n}{2}
\right\}
-
\max
\left\{
\frac{n}{p_2},
\frac{n}{2}
\right\}
+s_1+s_2.
\]

\bigskip
\noindent
\textit{Necessity of \eqref{mppj}} :
It suffices to prove the necessity of the condition \eqref{mppj} with $j=2$, by symmetry.

For the case  $1 < p < \infty$, we again use the duality argument, 
which is used in the proof of the necessity of \eqref{mmm}.
By duality, we see that 
the boundedness \eqref{bdd-necessity} implies
\[
\Op(BS^{m}_{0,0})
\subset
B
(L^{p^\prime}_{-s} \times L^{p_2}_{s_2} \to L^{p_1^\prime}_{-s_1}).
\]
Since $1< p^{\prime} < \infty$,  it follows from the necessity of \eqref{mp1p2} that  
\[
m
\le 
n
-
\max
\left\{
\frac{n}{p^\prime},
\frac{n}{2}
\right\}
-
\max
\left\{
\frac{n}{p_2},
\frac{n}{2}
\right\}
+
(-s)
+
s_2,
\]
and this means
\[
m
\le 
\min
\left\{
\frac{n}{p},
\frac{n}{2}
\right\}
-
\max
\left\{
\frac{n}{p_2},
\frac{n}{2}
\right\}
+
s_2
-s.
\]

Next, we consider the case  $0 < p \le 1$. 
Notice that $\min\{n/p, n/2\} = n/2$.
We set 
\begin{align*}
&\sigma_{\ell}(\xi_1, \xi_2)
=
\vphi(\xi_1)
\sum_{\mu \in \Lambda_\ell}
c_\mu \la \mu \ra^{m}
\vphi(\xi_2-\mu), 
\end{align*}
where $\{c_{\mu}\}_{\mu \in \Z^n}$ is a sequence of complex numbers satisfying $\sup_{\mu \in \Z^n} |c_\mu| \le 1$. 
We use the functions  
\begin{align*}
&
\widehat{f}_1(\xi_1) = \widetilde{\vphi}(\xi_1),
\quad
\widehat{f_{a_2, b_2, \ell}}(\xi_2) = \phi_\ell(\xi_2) \widehat{f_{a_2, b_2}}(\xi_2)
\end{align*}
($\widetilde{\vphi}$ and $f_{a_2, b_2, \ell}$  are the same as in \eqref{supp-vphi*}  and \eqref{goodexample}, respectively).
Since $\sup_{\mu \in \Z^n}|c_\mu| \le 1$, we have  
\[
|\pa_{\xi_1}^{\beta_1} \pa_{\xi_2}^{\beta_2} \sigma_{\ell}(\xi_1, \xi_2)|
\lesssim
(1+|\xi_1| + |\xi_2|)^m
\] 
uniformly in $c_{\mu}$ and $\ell$.
Furthermore, by \eqref{closedgraph},  we have
\begin{equation}\label{opnormuniform}
\|T_{\sigma_{\ell}}\|_{L^{p_1}_{s_1} \times L^{p_2}_{s_2} \to h^p_s}
\lesssim
1
\end{equation} 
with the implicit constant independent of $c_{\mu}$ and $\ell$.

Now, since $\vphi\widetilde{\vphi} = \vphi$, and since $\vphi(\xi_2-\mu)\phi_{\ell}(\xi_2) = \vphi(\xi_2-\mu)$ if $\mu \in \Lambda_{\ell}$, we have
\[
T_{\sigma_\ell}(f_1, f_{a_2, b_2, \ell})(x)
=
\{\Phi(x)\}^2
\sum_{\mu \in \Lambda_\ell}
c_\mu 
\la \mu \ra^{m}
|\mu|^{-b_2}e^{i|\mu|^{a_2}}
e^{i \mu \cdot x}.
\]
We choose $\{c_{\mu}\}$ as $c_\mu = r_\mu(\omega) e^{-i |\mu|^{a_2}}$,
where $\{r_{\mu}(\omega)\}$ is a sequence of Rademacher functions on 
$[0, 1]^n$ enumerated in such a way that their index set is $\Z^n$.
Then, we have
\begin{align*}
T_{\sigma_\ell}(f_1, f_{a_2, b_2, \ell})(x)
=
\sum_{\mu \in \Lambda_\ell}
r_\mu(\omega) \la \mu \ra^{m}
|\mu|^{-b_2}
e^{i \mu \cdot x}
\{\Phi(x)\}^2.
\end{align*}
Since
\[
\supp \F[T_{\sigma_\ell}(f_1, f_{a_2, b_2, \ell})]
\subset
\bigcup_{\mu \in \Lambda_{\ell}}\supp[\vphi * \vphi](\cdot-\mu)
\subset
\{
2^{\ell-1/4}
\le
|\zeta|
\le
2^{\ell+1/4}
\},
\] 
we have 
\begin{equation}\label{saiaku}
\|T_{\sigma_\ell}(f_1, f_{a_2, b_2, \ell})\|_{h^{p}_{s}} 
\approx 
2^{\ell s}\|T_{\sigma_\ell}(f_1, f_{a_2, b_2, \ell})\|_{L^p}.
\end{equation}
Furthermore, by the same argument as in the proof of the necessity \eqref{mmm}, 
we have 
\begin{align*}
\int_{[0,1]^n}
\|T_{\sigma_\ell}(f_1, f_{a_2, b_2, \ell})\|_{L^p}^{p}
d\omega
&=
\int_{[0,1]^n}
\int_{\R^n}
\left|
\sum_{\mu \in \Lambda_\ell}
r_\mu(\omega) \la \mu \ra^{m}
|\mu|^{-b_2}
e^{i \mu \cdot x}
\{\Phi(x)\}^2
\right|^p
d\omega dx
\\
&\ge
\int_{[-\pi, \pi]^n}
\int_{[0,1]^n}
\left|
\sum_{\mu \in \Lambda_\ell}
r_\mu(\omega) \la \mu \ra^{m}
|\mu|^{-b_2}
e^{i \mu \cdot x}
\right|^p
d\omega dx
\\
&\approx
\int_{[-\pi, \pi]^n}
\left(
\sum_{\mu \in \Lambda_\ell}
\la \mu \ra^{2m}
|\mu|^{-2b_2}
\right)^{p/2}
dx
\\
&\approx
2^{(m-b_2+n/2)p}.
\end{align*}
where we used the assumption \eqref{supp-vphi} in the second inequality
and 
 Khintchine's inequality in the third inequality.
Thus, combining the above estimate with \eqref{opnormuniform}, \eqref{saiaku} and \eqref{fabequiv},
we obtain 
\[
2^{\ell(s+m-b_2+n/2) p}
\lesssim
\int_{[0,1]^n}
\|T_{\sigma_\ell}(f_1, f_{a_2, b_2, \ell})\|_{h^p_s}^{p}
d\omega
\lesssim
\|f_1\|_{L^{p_1}_{s_1}}^{p}
\|f_{a_2, b_2, \ell}\|_{L^{p_2}_{s_2}}^{p}
\lesssim
2^{\ell s_2 p},
\]
that is, 
$2^{\ell(s+m-b_2+n/2)} \lesssim 2^{\ell s_2}$.
Since $\ell$ is arbitrarily large, we obtain
\[
m
\le 
b_2-\frac{n}{2} + s_2 -s
=
(1-a_2)
\left(
\frac{n}{2}-\frac{n}{p_2}
\right) 
+s_2-s
+ \epsilon.
\]
If we take limits as $\epsilon \to 0$ and
\begin{align*}
\begin{cases}
a_2 \to 1 &\text{if} \quad 2 \le p_2 < \infty,
\\ 
a_2 \to 0 &\text{if} \quad 1 < p_2 \le 2, 
\end{cases}
\end{align*}
then
we obtain
\[
m
\le
\frac{n}{2}
-
\max
\left\{
\frac{n}{p_2},
\frac{n}{2}	
\right\}
+s_2
-s.
\]

\bigskip
The proof of Proposition \ref{key-prop*} is complete.
\end{proof}

\begin{proof}[Proof of Theorem \ref{main-thm2}]
Let $0< p, p_1, p_2 \le \infty$, $1/p \le 1/p_1+1/p_2$ and $s_1, s_2, s \in \R$.

We first consider the necessity of the condition \eqref{necessity-1} (or \eqref{mmm}).
We set
\[
m
=
\min
\left\{
\frac{n}{p},
\frac{n}{2}
\right\}
-
\max
\left\{
\frac{n}{p_1},
\frac{n}{2}
\right\}
-
\max
\left\{
\frac{n}{p_2},
\frac{n}{2}
\right\}
+s_1
+s_2
-s
+\epsilon.
\]
for some $\epsilon >0$.
Toward a contradiction, we suppose that the boundedness \eqref{main-est} with this $m$.
It is proved in \cite[Theorem 1.3 and Example 1.4]{KMT-bi-L2L2} that the boundedness
\[
\Op(BS^{-n/2}_{0,0})
\subset
B(L^{2} \times L^{2} \to L^{2})
\]
holds. Then, by complex interpolation, 
\[
\Op(BS^{\widetilde{m}}_{0,0})
\subset
B(L^{\widetilde{p}_1}_{\widetilde{s}_1} \times L^{\widetilde{p}_2}_{\widetilde{s}_2} \to h^{\widetilde{p}}_{\widetilde{s}}),
\]
where
\begin{align*}
&\frac{1}{\widetilde{p}_j}
=
\frac{1-\theta}{2} + \frac{\theta}{p_j},
\quad
\widetilde{s}_j = \theta s_j,
\quad
j=1,2,
\\
&
\frac{1}{\widetilde{p}}
=
\frac{1-\theta}{2} + \frac{\theta}{p},
\quad
\widetilde{s} = \theta s,
\end{align*}
and
\begin{align*}
\widetilde{m}
&=
(1-\theta)
\left(
-
\frac{n}{2}
\right)
+
\theta
m
\\
&=
\min
\left\{
\frac{n}{\widetilde{p}},
\frac{n}{2}
\right\}
-
\max
\left\{
\frac{n}{\widetilde{p}_1},
\frac{n}{2}
\right\}
-
\max
\left\{
\frac{n}{\widetilde{p}_2},
\frac{n}{2}
\right\}
+\widetilde{s}_1
+\widetilde{s}_2
-\widetilde{s}
+\theta \epsilon.
\end{align*}
and $0< \theta < 1$. 
If we take $\theta$ sufficiently close to $0$, then $1< \widetilde{p}_1, \widetilde{p}_2 < \infty$.
However, by Proposition \ref{key-prop*}, it is impossible to have the boundedness with these $\widetilde{p}_1, \widetilde{p}_2, \widetilde{p}$ and $\widetilde{m}$. 
Thus, the proof of the necessity of the condition \eqref{necessity-1} is complete.

We next prove the necessity of the condition \eqref{necessity-3}. 
We suppose that the boundedness \eqref{main-est} holds for 
\[
m 
=
\min
\left\{
\frac{n}{p},
\frac{n}{2}
\right\}
-
\max
\left\{
\frac{n}{p_1},
\frac{n}{2}
\right\}
+s_1-s
+\epsilon, 
\quad
\epsilon > 0.
\]
It is proved in Theorem \ref{main-thm} that the boundedness 
\begin{equation}\label{L2tL2tL2t}
\Op(BS^{-n/2+t}_{0, 0})
\subset
B(L^{2}_{t} \times L^{2}_{t} \to L^{2}_{t}).
\end{equation}
holds for  any $-n/2 < t < n/2$. 
Then, by complex interpolation, we have
\[
\Op(BS^{\widetilde{m}}_{0, 0})
\subset
B(h^{\widetilde{p}_1}_{\widetilde{s}_1} \times h^{\widetilde{p}_2}_{\widetilde{s}_2} \to h^{\widetilde{p}}_{\widetilde{s}})
\]
where $0< \theta < 1$, 
\begin{align*}
&\frac{1}{\widetilde{p}_j}
=
\frac{1-\theta}{2}
+
\frac{\theta}{p_j},
\quad
\widetilde{s_j} 
=
(1-\theta)t
+
\theta s_j ,
\quad
j=1, 2,
\\
&\frac{1}{\widetilde{p}}
=
\frac{1-\theta}{2}
+
\frac{\theta}{p},
\quad
\widetilde{s} 
=
(1-\theta)t
+
\theta s,
\end{align*}
and
\begin{align*}
\widetilde{m}
&=
(1-\theta)
\left(
-\frac{n}{2}
+t
\right)
+
\theta
m
\\
&=
\min
\left\{
\frac{n}{\widetilde{p}},
\frac{n}{2}
\right\}
-
\max
\left\{
\frac{n}{\widetilde{p}_1},
\frac{n}{2}
\right\}
+\widetilde{s}_1- \widetilde{s}
+
(1-\theta)
\left(
t
-
\frac{n}{2}
\right)
+
\theta\epsilon.
\end{align*}
Notice that $1 < \widetilde{p}_1, \widetilde{p}_2 < \infty$ if $\theta$ is sufficiently close to $0$.
If we take the number $t$ such that
\[
\frac{n}{2}
-
\frac{\theta \epsilon}{1-\theta}
<
t
<
\frac{n}{2},
\]
then $\widetilde{m} > \min\{n/\widetilde{p}, n/2\} -\max\{n/\widetilde{p}_1, n/2\} +\widetilde{s}_1-\widetilde{s}$,
but this contradicts  Proposition \ref{key-prop*} since $1< \widetilde{p}_1, \widetilde{p}_2 < \infty$. Thus we obtain the necessity of the condition \eqref{mppj} with $j=1$.

The necessity of the condition \eqref{mppj} with $j=2$ follows from the same argument as above.

Finally, we consider the necessity of the condition \eqref{mp1p2}.
Set
\[
m 
=
n
-
\max
\left\{
\frac{n}{p_1},
\frac{n}{2}
\right\}
-
\max
\left\{
\frac{n}{p_2},
\frac{n}{2}
\right\}
+s_1+s_2
+\epsilon, 
\quad
\epsilon > 0.
\]
We suppose that the boundedness \eqref{main-est} holds for this $m$.
Then, interpolating \eqref{main-est} with \eqref{L2tL2tL2t}, we obtain
\[
\Op(BS^{\widetilde{m}}_{0, 0})
\subset
B(h^{\widetilde{p}_1}_{\widetilde{s}_1} \times h^{\widetilde{p}_2}_{\widetilde{s}_2} \to h^{\widetilde{p}}_{\widetilde{s}})
\]
with $0< \theta < 1$ and
\begin{align*}
&\frac{1}{\widetilde{p}_j}
=
\frac{1-\theta}{2}
+
\frac{\theta}{p_j},
\quad
\widetilde{s_j} 
=
(1-\theta)t
+
\theta s_j,
\quad
j=1, 2,
\\
&\frac{1}{\widetilde{p}}
=
\frac{1-\theta}{2}
+
\frac{\theta}{p},
\quad
\widetilde{s} 
=
(1-\theta)t
+
\theta s
\end{align*}
and
\begin{align*}
\widetilde{m}
&=
(1-\theta)
\left(
-\frac{n}{2}
+t
\right)
+
\theta
m
\\
&=
n
-
\max
\left\{
\frac{n}{\widetilde{p}_1},
\frac{n}{2}
\right\}
-
\max
\left\{
\frac{n}{\widetilde{p}_2},
\frac{n}{2}
\right\}
+\widetilde{s}_1
+
\widetilde{s}_2
-
(1-\theta)
\left(
t
+
\frac{n}{2}
\right)
+
\theta\epsilon
\end{align*}
Taking the number $t$ satisfying  
\[
-\frac{n}{2}
<
t
<
-\frac{n}{2}
+
\frac{\theta \epsilon}{1-\theta},
\]
we have
$\widetilde{m} > n - \max\{n/\widetilde{p}_1, n/2\} -\max\{n/\widetilde{p}_2, n/2\} +\widetilde{s}_1+\widetilde{s}_2$.
However, this is a contradiction by the same reasons just above. 
Hence we obtain the necessity  of the condition \eqref{necessity-3}.

The proof of Theorem \ref{main-thm2} is complete.
\end{proof}

\section{Appendix }
In this appendix, we complete the proof of Theorem \ref{main-thm+}.

\begin{proof}[Proof of Theorem \ref{main-thm+} with $(1/p_1, 1/p_2) \in [1, \infty)^2$]
Let $0 < p_1, p_2 \le 1$ and $0< p \le \infty$ be such that  $1/p \le 1/p_1 + 1/p_2$, 
and
let $s_1, s_2, s \in \R$ be such that 
\begin{align}\label{apdassum}
s_1 <  \frac{n}{p_1}, 
\quad
s_2 <  \frac{n}{p_2}, 
\quad
s > - \max \left\{ \frac{n}{p^{\prime}}, \frac{n}{2} \right\}.
\end{align}
We prove 
\[
\Op(BS^{m_c(p_1, p_2, p)+s_1+s_2-s}_{0,0})
\subset
B(h^{p_1}_{s_1} \times h^{p_2}_{s_2} \to h^p_s),
\]
where $m_c(p_1, p_2, p)$ is the same defined in the Section \ref{sec-mainthm}.

We set
\begin{align*}
&
\left(
\frac{1}{q_1}, 
\frac{1}{q_2}
\right)
=
\left(
\frac{1}{p_1} + \frac{1}{p_2} - \frac{1}{2},
\ 
\frac{1}{2}
\right),
\quad
(t_1, t_2)
=
\left(
s_1 +\frac{n}{p_2} - \frac{n}{2},
\ 
s_2 + \frac{n}{2} - \frac{n}{p_2}
\right),
\\
&
\left(
\frac{1}{r_1}, 
\frac{1}{r_2}
\right)
=
\left(
\frac{1}{2},
\ 
\frac{1}{p_1} + \frac{1}{p_2} - \frac{1}{2}
\right),
\quad
(u_1, u_2)
=
\left(
s_1 +\frac{n}{2} - \frac{n}{p_1},
\ 
s_2 +\frac{n}{p_1} - \frac{n}{2}
\right).
\end{align*}
Then since $1/q_1 + 1/q_2 = 1/r_1 + 1/r_2 = 1/p_1+ 1/p_2$, we have
$1/p \le 1/q_1+1/q_2$ and $ 1/p \le 1/r_1+1/r_2$. 
We see that $(1/q_1, 1/q_2)$ and $(1/r_1, 1/r_2)$ are in
$[0, \infty)^2 \setminus [1, \infty)^2$.
Furthermore our assumption \eqref{apdassum} implies that 
\begin{align*}
&
\quad
t_1 < \frac{n}{q_1},
\quad
t_2 < \frac{n}{q_2}
\quad
\text{and}
\quad
u_1 < \frac{n}{r_1},
\quad
u_2 < \frac{n}{r_2}.
\end{align*}
Hence,  as shown in the proof of Theorem \ref{main-thm+}, 
we have the following boundedness.
\begin{align}
&
\Op(BS^{m_c(q_1, q_2, p)+t_1+t_2-s}_{0,0})
\subset
B(h^{q_1}_{t_1} \times h^{q_2}_{t_2} \to h^{p}_{s});
\label{interpbdd1}
\\
&
\Op(BS^{m_c(r_1, r_2, p)+u_1+u_2-s}_{0,0})
\subset
B(h^{r_1}_{u_1} \times h^{r_2}_{u_2} \to h^{p}_{s}).
\label{interpbdd2}
\end{align}
If we set  
$\theta = (1/p_2 - 1/2)/(1/p_1 + 1/p_2 - 1)$, 
then $0< \theta < 1$. 
Thus, interpolating \eqref{interpbdd1} and \eqref{interpbdd2} with this $\theta$, we obtain the desired boundedness since
\begin{align*}
&
\frac{1-\theta}{q_j}
+
\frac{\theta}{r_j}
=
\frac{1}{p_j},
\quad
(1-\theta) t_j + \theta u_j
=
s_j,
\quad
j=1,2,
\end{align*}
and
\begin{align*}
&(1-\theta)(m_c(q_1, q_2, p) + t_1+t_2-s)
+
\theta (m_c(r_1, r_2, p) + u_1+u_2-s)
\\
&=
m_c(p_1, p_2, p) + s_1+s_2-s.
\end{align*}
\end{proof}

\section*{Acknowledgement}
The author sincerely expresses his thanks to Professor Naohito Tomita for valuable suggestion and fruitful comments.
This work was supported by Grant-in-Aid for JSPS Fellows 
Grant Number 22J10001.

\end{document}